\newtheorem{thm}{Theorem}[section]
\newtheorem{theo}[thm]{Theorem}
\newtheorem{lem}[thm]{Lemma}	
\newtheorem{proposition}[thm]{Proposition}
\newtheorem{defi}[thm]{Definition}
\newtheorem{assumption}[thm]{Assumption}
\theoremstyle{remark}
\newtheorem{remark}[thm]{Remark}
\newtheorem{example}[thm]{Example}
\newtheorem*{lemma*}{Lemma}
\newcommand{\N}{\mathbb{N}}
\newcommand{\R}{\mathbb{R}}
\newcommand{\E}{\mathbb{E}}
\newcommand{\F}{\mathcal{F}}
\newcommand{\Prob}{\mathbb{P}}
\newcommand{\1}{\mathbbm{1}}
\newcommand{\bb}[1]{\mathbb{#1}}
\newcommand{\br}[1]{\lbrace #1 \rbrace}
\newcommand{\toinf}{\rightarrow\infty}
\newcommand{\tto}{\rightarrow}
\newcommand{\W}{\Omega}
\newcommand{\mc}[1]{\mathcal{ #1}}
  \def\arxiveprint{%
    \resolve@inner{\bib@arxiveprint}
  }
  \def\bib@arxiveprint#1{%
    \begingroup
        #1\relax
        \bib@resolve@xrefs
        \bib@field@patches
        \bib'setup
        \let\PrintPrimary\@empty
        {%
          \IfEmptyBibField{arxivid}{\url{https://arxiv.org/}}
          {%
            \href{https://arxiv.org/abs/\bib'arxivid}{\nolinkurl{arXiv:\bib'arxivid}}%
            \IfEmptyBibField{arxivclass}{}{~\nolinkurl{[\bib'arxivclass]}}% \nopunct
          }
        }\bib'transition
        \setbib@@
    \endgroup
  }
\author{Z.W. Bezemek}
\email[Zachary William Bezemek]{zwb@duke.edu}
\address[Zachary William Bezemek]{Duke University, Department of Mathematics\\ 120 Science Drive, Durham, NC 27708, USA}
\author{M. Heldman}
\email[Max Heldman]{maxh@vt.edu}
\address[Max Heldman]{Virginia Tech, Department of Mathematics\\ 225 Stanger Street, Blacksburg, VA 24061, USA}
\thanks{Z.W.B. was partially supported by the National Science Foundation (DMS 2107856). The majority of this work was completed while Z.W.B. was a PhD candidate at Boston University. M.H. was partially supported by NSF-DMS 1902854, ARO W911NF-20-1-0244, and a subgrant of NSF-OAC 2139536. The authors of the paper would like to thank both reviewers for a their careful and constructive reviews of this article.\\ }
\title{Importance Sampling for the Empirical Measure of Weakly Interacting Diffusions}
\date{\today} 
\begin{document}
\begin{abstract}
We construct an importance sampling method for computing statistics related to rare events for weakly interacting diffusions. Standard Monte Carlo methods behave exponentially poorly with the number of particles in the system for such problems. Our scheme is based on subsolutions of a Hamilton-Jacobi-Bellman (HJB) equation on Wasserstein space which arises in the theory of mean-field (McKean-Vlasov) control. We identify conditions under which such a scheme is asymptotically optimal. In the process, we make connections between the large deviations principle for the empirical measure of weakly interacting diffusions, mean-field control, and the HJB equation on Wasserstein space. We also provide evidence, both analytical and numerical, that with sufficient regularity of the HJB equation, our scheme can have vanishingly small relative error in the many particle limit.
\end{abstract}
\subjclass[2010]{60F10, 60F05, 65C05}
\keywords{interacting particle systems, empirical measure, large deviations, importance sampling}
\maketitle

% Sections
\section{Introduction}\label{sec:background}

Consider the weakly interacting particle system:
\begin{align}\label{eq:IPS}
dX^{i,N,s,y}_t &= b(X^{i,N,s,y}_t,\mu^{N,s,y}_t)dt+\sigma(X^{i,N,s,y}_t,\mu^{N,s,y}_t) dW^i_t,\quad X^{i,N,s,y}_s = y_i
\end{align}
on some stochastic basis $(\W,\F,\Prob),\br{\F_t}_{t\in[s,T]}$ satisfying the usual conditions, where $b:\R^d\times\mc{P}_2(\R^d)\tto \R^d$, $\sigma:\R^d\times\mc{P}_2(\R^d)\tto \R^{d\times m}$, $\br{W^i}_{i\in \bb{N}}$ are independent $m$-dimensional standard $\F_t$-Brownian motions initialized at $W^i_s=0$, $i\in \br{1,...,N}$. Here $\mc{P}_2(\R^d)$ is the space of probability measures with finite second moment (see Definition \ref{def:lionderivative}). In \eqref{eq:IPS}, $N$ denotes the total number of particles, $i\in\{1,2,...,N\}$ indexes a particular particle, and $\mu^{N,s,y}\in C([0,T];\mc{P}(\R^d))$ is the empirical measure:
\begin{align}\label{eq:empiricalmeasure}
\mu^{N,s,y}_t\coloneqq \frac{1}{N}\sum_{i=1}^N \delta_{X^{i,N,s,y}_t},\quad t\in [s,T].
\end{align}
The $i$'th entry of the vector $y=(y_1,y_2,...)\in \oplus_{i=1}^\infty\R^d$ contains the deterministic initial condition for the $i$'th particle at time $s$ satisfying $0\leq s\leq t\leq T$. The entries $y_i$ for $i > N$ are ignored for any particular value of $N$; we include them in defining \eqref{eq:IPS} for convenience, since our purpose will be to consider the sequence of solutions to \eqref{eq:IPS} as $N \to \infty$. We will at times identify $y$ with its projection onto $\R^{dN} = [\R^{d}]^N = \oplus_{i=1}^N \R^d$ without ambiguity.

In this paper, we design a control-based importance sampling scheme for estimating functionals of the form 
\begin{align}\label{eq:desiredexpectation}
  \E\biggl[\exp\left(-NG(\mu^{N,s,y}_T)\right) \biggr],\quad G:\mc{P}(\R^d)\tto\R.
  \end{align}
The basic method goes as follows: letting $v^N_i:[s,T]\times\R^{dN}\tto \R^m$ be a chosen bounded function, or control, for each $i\in \br{1,...,N}$ and $N\in\bb{N}$, we consider the \textit{controlled} version of the interacting particle system \eqref{eq:IPS} given by
\begin{equation}\label{eq:controlledparticles}
\begin{split}
d\hat{X}^{i,N,s,y}_t &= [b(\hat{X}^{i,N,s,y}_t,\hat{\mu}^{i,N,s,y}_t)+\sigma(\hat{X}^{i,N,s,y}_t,\hat{\mu}^{i,N,s,y}_t) v^N_i(t,\hat{X}^{1,N,s,y}_t,...,\hat{X}^{N,N,s,y}_t)]dt\\ 
&+\sigma(\hat{X}^{i,N,s,y}_t,\hat{\mu}^{i,N,s,y}_t) d\hat{W}^i_t,\quad \hat{X}^{i,N,s,y}_s = y_i.
\end{split}
\end{equation}
Girsanov's theorem allows us to convert statistics for the controlled system \eqref{eq:controlledparticles} into statistics for \eqref{eq:IPS}; therefore, in order to develop a more accurate Monte Carlo estimator for \eqref{eq:IPS} our objective will be to identify controls $v_i^N$ that reduce the variance of our target statistic compared with the uncontrolled version, or in the best case cause the relative variance to vanish as $N\toinf$. By these measures, we find that subsolutions of a zero-viscosity Hamilton-Jacobi-Bellman (HJB) equation on Wasserstein space, which we derive starting with the Dawson-G\"artner large deviations principle (LDP) \cites{DG,BDF}, provide controls with good performance.

Our main results, stated rigorously in Section \ref{sec:mainresults}, show that under certain assumptions our scheme outperforms standard Monte Carlo methods by requiring a subexponential number of samples to achieve a given relative error in estimating \eqref{eq:desiredexpectation} as $N\toinf$ (Theorem \ref*{theo:logefficient}). Under stronger assumptions, we show that in fact only a \textit{vanishing} number of simulations suffices (Theorem \ref*{theo:expansionanalysis}), so that for large $N$ our method requires only a single sample. Our theoretical results are confirmed by numerical experiments in Section \ref{sec:numerics}.

Our importance sampling scheme is related to and inspired by previous work concerning the development of asymptotically optimal importance sampling methods for estimating expectations of the form
\begin{align}\label{eq:smallnoisedesiredexpectation}
\E\biggl[\exp\left(-\frac{1}{\epsilon}G\left(X^{\epsilon,s,y}_T\right)\right) \biggr],\quad G\in C_b(\R^d),
\end{align}
where $X^{\epsilon,s,y}$ is a single particle satisfying, e.g., the small-noise SDE:
\begin{equation}\label{eq:smallnoiseSDE}
  \begin{split}
dX^{\epsilon,s,y}_t &= b(X^{\epsilon,s,y}_t)dt+\sqrt{\epsilon}\sigma(X^{\epsilon,s,y}_t) dW_t,\\ 
X^{\epsilon,s,y}_s&=y\in\R^d.
  \end{split}
\end{equation} 
We will draw parallels between the small-noise setting and ours throughout this article.

A general strategy for efficiently computing \eqref{eq:smallnoisedesiredexpectation} involves exploiting the connection between the large deviations rate function of Freidlin-Wentzell (FW) \cite{FW} and a class of first-order Hamilton-Jacobi-Bellman (HJB) equations, whose subsolutions can then be leveraged to obtain optimal controls for the zero-noise process $X^{0,s,y}$. In turn, the controls can be used to design an optimal change of measure for estimating \eqref{eq:smallnoisedesiredexpectation}. The control formulation of the FW rate function, provided via the weak convergence approach to large deviations of Dupuis and Ellis \cites{DE,BD}, makes the chain linking the rate function, zero-viscosity HJB equation, and optimal control of the zero-noise process more evident. The strategy described above has been applied for a diverse range of stochastic models \cites{DSW,VW,DW1,DW2,SPDEImportanceSampling}. 

Elevating the above framework for small-noise SDE importance sampling schemes to interacting particle systems is a major contribution of this paper.  Moreover, the connections between the approaches used herein and in the small-noise setting allow us to conjecture (see Remark \ref{remark:extensions}) about extending this research to problems related to metastable dynamics that have been previously considered for the small-noise case (see, e.g., \cite{GSE} and the references therein) such as exit probabilities and mean first passage times. We therefore view this paper as the first step towards a new method of designing importance sampling schemes for studying properties of the empirical measure which are of interest to the greater scientific community, such as the free-energy differences for chemical and biological molecular systems \cites{BRS,CGHLPC,VTP,HBSBS} or exit times of the empirical measure from domains of attraction in models related to social dynamics and consensus convergence \cites{Dawson,Garnier1,Garnier2,BM,GS}.

Let us now make the comparison between the interacting particle system \eqref{eq:IPS} and the small-noise system \eqref{eq:smallnoiseSDE} more explicit. It is well known that $\mu^{N,s,y}\tto \mc{L}(X^{s,\nu})$ in distribution when considered as $\mc{P}(C([s,T];\R^d))$-valued random variables. Here $\nu\in \mc{P}(\R^d)$ is the weak limit as $N\toinf$ of the empirical measures $\frac{1}{N}\sum_{i=1}^N\delta_{y_i}$, and $X^{s,\nu}$ satisfies the McKean-Vlasov equation:
\begin{align}\label{eq:McKeanVlasovEquation}
dX^{s,\nu}_t &= b(X^{s,\nu}_t,\mc{L}(X^{s,\nu}_t))dt+\sigma(X^{s,\nu},\mc{L}(X^{s,\nu}_t))dW_t,\quad X^{s,\nu}_s\sim \nu.
\end{align}
This phenomenon, known as the propagation of chaos, goes back to the works of Kac \cite{Kac}. The empirical measure $\mu^{N,s,y}$ can also formally be seen to satisfy an \textit{infinite-dimensional} small-noise equation, analogous to \eqref{eq:smallnoiseSDE}, with parameter $\epsilon = \frac{1}{\sqrt{N}}$, as per pp. 249-250 of \cite{DG}. Thus, the problem of estimating \eqref{eq:desiredexpectation} has the same basic structure as the more well-known problem of estimating \eqref{eq:smallnoisedesiredexpectation}, but with a different state space.  % Consequently, we have a sequence of \textit{measure-valued} random variables which converges to a deterministic limit, suggesting that in analogy to the small-noise case we should replace $G^N$ in \eqref{eq:desiredexpectationN} with a fixed function $G:\mc{P}(\R^d) \to \R$ of the prelimit process $\mu^{N,s,y}$. \maxedit{In fact, as we alluded to earlier, the exchangeability assumption implies that we can equivalently view each $G^N$ as a map on the space of empirical measures $$\mathcal{P}^N(\R^d) = \left\{\sum_{i=1}^N \delta_{x_i} : (x_1,...,x_N) \in \R^d\right\} \subseteq \mc{P}_2(\R^d).$$ Therefore, it is natural to assume (as we do herein) that there exists a common function $G:\mc{P}(\R^d)\to \R$ which agrees with $G^N$ on $\mc{P}^N(\R^d)$, so that the expectation \eqref{eq:desiredexpectationN} that we want to estimate can be written as
%\begin{align}\label{eq:desiredexpectation}
%\E\biggl[\exp\left(-NG(\mu^{N,s,y}_T)\right) \biggr].
%\end{align}}

%makes the reformulation of \eqref{eq:desiredexpectationN} to \eqref{eq:desiredexpectation} natural by simply defining $G(\mu) = G^N(\mu)$ for $\mu\in \mathcal{P}^N(\R^d) \setminus \cup_{M < N} \mathcal{P}^M(\R^d)$; the full extension to $\mathcal{P}(\R^d)$ follows by imposing a relationship between the $G^N$ via a weak continuity assumption on $G$ (see, e.g., equation (55) in \cite{PropChaosI} or equation (1) in \cite{Grunbaum} for further discussion).}
%, i.e.,\begin{align*}
%  G^N(\underbrace{x_1,...,x_1}_{k\text{ times}},x_2,...,x_2,...,x_M,...,x_M) = G^M(x_1,x_2,...,x_M)
%\end{align*}
%whenever $M = kN$. 

An LDP for the many-particle limit of $\br{\mu^{N,s,y}}_{N\in\bb{N}}$ was first established in the classical work of \cite{DG} in the case where the diffusion coefficient $\sigma$ does not depend on the empirical measure, with the result extended via the weak convergence approach of Dupuis and Ellis \cite{DE} to the general setting in \cite{BDF}. In the latter, the form of the rate function is given in terms of a control problem (see Theorem \ref{theo:LDP}), which further motivates our search for an asymptotically optimal importance sampling scheme for \eqref{eq:desiredexpectation} associated with an HJB equation as in the small-noise SDE setting. In contrast to the setting of small-noise SDEs, where the state space of the HJB equation is Euclidean space, we find that the appropriate HJB equation to consider in our setting of measure-valued random variables is posed on Wasserstein space. %As it turns out, since our small-noise SDE is measure valued the correct HJB equation to consider is a second-order PDE on Wasserstein space \eqref{eq:HJBequation},.  Once this connection is established, we can draw several parallels between the nature of the asymptotically optimal importance sampling scheme for the target expectation \eqref{eq:desiredexpectation} that we derive and those in the existing literature for small noise SDEs --- see  Remarks \ref{remark:connectionbetweenratefunctionandHJB}, \ref{remark:ontheassumptionsforexpansionanalysis}, \ref{remark:comparisonwiththesmallnoisecase}, and \ref{remark:IIDandlinearG} and Section \ref{sec:ontheHJBEquation}

Perhaps due to the computational intractability of both the Dawson-G\"artner \cite{DG} and Budhiraja-Dupuis-Fischer \cite{BDF} rate functions, practical applications of the LDP for the many-particle limit of the empirical measure \eqref{eq:empiricalmeasure} associated with \eqref{eq:IPS} have been few and far between (for some exceptions see, e.g., \cites{Garnier1,Garnier2,NN,DMG}). On the other hand, in recent years the HJB equation on Wasserstein space of \cite{PW} (along with the related equations in \cites{LP,CCD}) has received an immense amount of attention both in terms of numerical applications \cites{CL,CCD_Numerical,FZ,GLPW,GMW,Lauriere,HHL} and theoretical results \cites{BP,CD,PW,CGKPR1,CGKPR2,GPW,Yong,Lacker,BIRS}. The Dawson-G\"artner rate function has previously been related to the theory of mean field games and control through the observation that it can be viewed in terms of derivatives of the free energy associated to the limiting McKean-Vlasov equation viewed as a gradient flow on Wasserstein space in some settings \cites{ADPZ1,ADPZ2,GNP,BCGL,FK,FN,FMZ,FKbook}. However, to our knowledge, this paper is the first time that the connection between the LDP rate function \eqref{eq:ratefunction} and the HJB equation on Wasserstein space \eqref{eq:HJBequation} has been made explicit in the literature (see Remark \ref{remark:connectionbetweenratefunctionandHJB}). %\red{Z: references to later eqns, should be ok} % add another sentence??? %. 

%Our discussion from the previous paragraph suggests thatIndeed, starting with our symmetry assumption for the coordinates of $G^N$ and considering the quotient space $\R^{dN}/\mathfrak{S}^N$ whose elements $\bar{x}^N$ contain all the permutations of the $\R^d$-valued coordinates of a vector $x^N\in \R^{dN}$
%\begin{align}\label{eq:PN}
%\mc{P}^N(\R^d)\coloneqq \left\{\frac{1}{N}\sum_{i=1}^N \delta_{x_i}:x_i\in\R^d,i=1,...,N\right\},
%\end{align} there is a canonical one-to-one mapping between $\R^{dN}/\mathfrak{S}^N$ and $\mc{P}^N(\R^d)$ (see, e.g., equation (55) in \cite{PropChaosI} or equation (1) \cite{Grunbaum}). Hence it is natural to assume the functions $G^N$ from \eqref{eq:desiredexpectationN} are such that we can find $G:\mc{P}(\R^d)\tto \R^d$ such that $G^N(x_1,...,x_N)=G(\frac{1}{N}\sum_{i=1}^N\delta_{x_i})$ for all $N$.

As far as we are aware, this paper also represents the first time the problem of estimating \eqref{eq:desiredexpectation} with an asymptotically optimal importance sampling scheme has been considered, though many other versions of the small-noise importance sampling problem have appeared in the literature. These include, for example, discrete-time Markov chains \cites{DW1,DW2}, diffusions with multiscale structure \cites{DSW,MSImportanceSampling}, and stochastic partial differential equations \cites{EMFBG,GSS_ImportanceSampling}. See in addition \cites{GJZ,BQRS,DSZ,ERV,GSE,VW} for a diverse, but by no means comprehensive, collection of small-noise importance sampling research in a variety of settings. We in particular mention the work of \cite{dRST} on the design of importance sampling schemes for small-noise McKean-Vlasov SDEs using a decoupling approach (see also \cites{BRHAMT1,BRHAMT2}) and a complete change of measure approach. The problem posed in \cite{dRST} is significantly different from the one we consider here, principally because they seek to estimate statistics of the limiting McKean-Vlasov equation \eqref{eq:McKeanVlasovEquation} rather than the interacting particle system \eqref{eq:IPS}. The change of measure they employ is therefore based on the Freidlin-Wentzell (small-noise) LDP for Brownian motion, rather than the Sanov-type (many-particle) LDP for the empirical measure \eqref{eq:empiricalmeasure} used in our setting. Note that Freidlin-Wentzell LDPs have been derived in different settings for McKean-Vlasov equations \cites{dRSTldp,LSZZ}, though they were not directly employed in the results of \cites{dRST,BRHAMT1,BRHAMT2}.
 
Finally, we note that methods of importance sampling for high-dimensional diffusions have also been established in previous research, e.g., \cites{BQRS,NR,TS}). While in theory these could be applied to estimate \eqref{eq:desiredexpectation}, they are designed for more general problems than ours and therefore do not exploit the exchangeability of the particles, and resulting propagation of chaos, to obtain an asymptotically optimal scheme. However, it may be that a combination of these non-asymptotic techniques with ours could allow us to treat a broader range of problems in the future, as we remark in Section \ref{sec:conclusions}.

%\red{cite from \url{https://www.one-tab.com/page/Yr5pw-HHRpmeF9Jjpt0UAg}, \url{https://www.one-tab.com/page/FMyho-eTQNOGXYRfNEMJWA},\url{https://www.one-tab.com/page/IiDoVi_5QricnpWoRNa2wA}}. 

%Small have been generalized to design importance sampling schemes for problems related to metastable dynamics of the stochastic system such as exit probabilities and mean first passage times --- see, e.g., \cite{GSE} and the references therein. Thus we view this paper as the first step towards a new method of designing importance sampling schemes for studying properties of the empirical measure which are of interest to the greater scientific community, such as the free-energy differences for chemical and biological molecular systems \cites{RBQRS,BRS,CGHLPC,VTP,HBSBS} or exit times of the empirical measure from domains of attraction in models related to social dynamics and consensus convergence \cites{Dawson,Garnier1,Garnier2,BM,GS}. 

The rest of the paper is organized as follows. We establish notation in Subsection \ref{sec:notation} below, and go on to provide background on large deviations theory for the empirical measure in Subsection \ref{sec:ldp}. Then, in Subsection \ref{sec:logefficiency} we develop measures of performance for the control-based importance sampling of diffusions for computing statistics of the form \eqref{eq:desiredexpectation}, some of which can be quantified using large deviations theory. In Section \ref{sec:mainresults}, we formally derive the HJB Equation on Wasserstein space and state our main results, Theorem \ref{theo:logefficient} and Theorem \ref{theo:expansionanalysis}. In Section \ref{sec:LQ}, we introduce a class of linear-quadratic examples for which the HJB equation can be solved analytically and discuss simple cases in which our scheme can be thoroughly analyzed in comparison to standard Monte Carlo, along the way comparing with results for small-noise diffusion processes on $\R^d$. In Section \ref{sec:numerics}, we demonstrate our method with numerical examples. Section \ref{sec:proofs} contains detailed proofs of the results stated in Section \ref{sec:mainresults}, before Section \ref{sec:conclusions} provides concluding remarks. Lastly, in Appendix \ref{appendix:P2differentiation} we provide additional background on the calculus of square-integrable probability measures, $\mc{P}_2(\R^d)$.

\subsection{Notation}\label{sec:notation}
%\red{Define all function spaces, the topologies they are equipped with, etc. including the space of symmetric functions where the empirical projections in Proposition \ref{prop:empprojderivatives} live. Maybe establish $\mu^N_x$ notation here?}
In the following paragraph, let $\bm{X}$ and $\bm{Y}$ be Polish spaces, and $(\W,\F,\mu)$ a measure space. In the course of this paper, we denote by $\mc{P}(\bm{X})$ the space of probability measures on $\bm{X}$ endowed with the topology of weak convergence. We use $\mc{P}_2(\bm{X})\subset \mc{P}(\bm{X})$ to denote the subspace of $ \mc{P}(\bm{X})$ consisting of square-integrable probability measures on $\bm{X}$ endowed with the 2-Wasserstein metric (see Definition \ref{def:lionderivative}). We additionally let $\mc{B}(\bm{X})$ be the Borel $\sigma$-algebra associated with $\bm{X}$, $C(\bm{X};\bm{Y})$ the space of continuous functions from $\bm{X}$ to $\bm{Y}$, $C_b(\bm{X})$ the space of bounded, continuous functions from $\bm{X}$ to $\R$ with norm $\norm{\phi}_\infty\coloneqq \sup_{x\in\bm{X}}|\phi(x)|$, and for $p\geq 1$ we define $L^p(\W,\F,\mu;\R^d)$ as the space of $p$-integrable functions on $(\W,\F,\mu)$ with values in $\R^d$ and norm $$\norm{\phi}_{L^p(\W,\F,\mu;\R^d)}=\biggl(\int_{\W} |\phi(z)|^p\mu(dz)\biggr)^{1/p}.$$ $C_{b,L}(\R^d)\subset C_b(\R^d)$ is the set of bounded, globally Lipschitz functions $\phi:\R^d\tto \R$ with norm $\norm{\phi}_{b,L}\coloneqq \norm{\phi}_\infty +\sup_{x,y\in\R^d,x\neq y}\frac{|\phi(x)-\phi(y)|}{|x-y|}$.  For $\phi \in L^1(\bm{X},\mu),\mu\in\mc{P}(\bm{X})$, we define the pairing $\langle \phi,\mu\rangle$ by  $$\langle \phi,\mu\rangle \coloneqq \int_{\bm{X}}\phi(x)\mu(dx).$$  

We note that, as in the above definitions, the codomain for a given function space is assumed to be $\R$ unless otherwise specified. We will also occasionally refer to a measure on a topological space $\bm{X}$ without explicitly stating the corresponding $\sigma$-algebra; in this case, the measurable sets are assumed to come from $\mathcal{B}(\bm{X})$. %We will at times denote $L^2(\R^d\times\R^d,\mu\otimes\mu)$ by $L^2(\R^d,\mu)\otimes L^2(\R^d,\mu)$. maybe this should just be defined where it is used?

Partial derivatives of a function $\phi$ with respect to a variable $x$ are normally denoted by $\partial_x\phi$; if the variable $x$ is measure valued then the derivative $\partial_x$ should be interpreted in the Lions sense (see Appendix \ref{appendix:P2differentiation}). For time derivatives $\partial_t \phi$ with respect to $t\in I\subseteq \R^+$,  we also use the notation $\dot{\phi}$. 

For $\phi:[0,T]\times\mc{P}_2(\R^d)\tto \R$, we use $\phi\in C^{1,2}([0,T]\times \mc{P}_2(\R^d))$ to mean that $\mu\mapsto \phi(t,\mu)$ is fully $C^2$ in the sense of Definition \ref{def:fullyC2} for all $t$, with all derivatives in that definition jointly continuous in $(t,z,z',\mu)$ and the map $t\mapsto \phi(t,\mu)$ continuously differentiable in $t$. The subspace $C_b^{1,2}([0,T]\times \mc{P}_2(\R^d))\subset C^{1,2}([0,T]\times \mc{P}_2(\R^d))$ denotes the class of functions $\phi$ such that further $\phi$ and all its derivatives from Definition \ref{def:fullyC2} are uniformly bounded. 

For $x,y\in\R^d$, $x=(x_1,...,x_d),y=(y_1,...,y_d)$, $x\cdot y$ denotes the standard inner product $x\cdot y\coloneqq \sum_{i=1}^d x_iy_i$. For $A,B \in \R^{d\times d}$ matrices with entries $\br{a_{ij}}_{i,j=1}^d$ and $\br{b_{ij}}_{i,j=1}^d$ respectively, $A:B\coloneqq \sum_{i,j=1}^da_{ij}b_{ij}$.

For $x\in \oplus_{i=1}^N \R^d$ or $\oplus_{i=1}^\infty \R^d$, we will denote by $\mu^N_x$ associated empirical measure, i.e., the element of $\mc{P}(\R^d)$ given by 
\begin{align*}
\mu^N_x\coloneqq\frac{1}{N}\sum_{i=1}^N\delta_{x_i}.
\end{align*}

%For variables $x$ associated with the uncontrolled system \eqref{eq:IPS}, we use $\hat{x}$ to denote the corresponding variable associated with the controlled system \eqref{eq:controlledparticles}. In the course of our arguments in Section \ref{sec:proofs}, we also deal with auxiliary particle systems \eqref{eq:IPSarbitrarydrift} with modified drift terms $\tilde{b}$. To distinguish from \eqref{eq:IPS}, we use the additional notation $\tilde{x}$ and $\bar{x}$ to denote variables associated with the controlled and uncontrolled versions of the auxiliary systems (see Proposition \ref{prop:prelimitLDPexpression} and the preceding paragraphs). 

Finally, for given sequences $\br{f_n},\br{g_n}$ in $\R^+$, we use the following notation to compare their asymptotic behavior: $f_n = \mathcal{O}(g_n)$ if there is a constant $C > 0$ independent of $n$ such that $f_n \leq Cg_n$, and $f_n = o(g_n)$ if $\frac{f_n}{g_n} \to 0$ as $n\to\infty$. If instead of positive real numbers $f_n$ and $g_n$ are sequences on normed spaces, we use the same notation to refer to the behavior of the sequences $|f_n|$ and $|g_n|$. %\red{Z: The reference to the auxiliary particle system is to an equation that comes later, but this might be okay}%In plain language, if $g_n\to 0$ ($\toinf$) then $f_n = \mathcal{O}(g_n)$ if $f_n\to 0$ ($\toinf$) \textit{at least as fast (slow) as} $g_n$, and further $f_n = o(g_n)$ if $f_n\to 0$ ($\toinf$) \textit{faster than (slower than)} $g_n$ (so that in the latter case $g_n$ eventually dominates $f_n$).}

\subsection{Large deviations results and an HJB equation for the particle system}\label{sec:ldp}

We now introduce some of the key tools for proving the our main results in Section \ref{sec:mainresults}, namely, a stochastic control representation for the LDP associated with the empirical measure \eqref{eq:empiricalmeasure} as provided by \cite{BDF} and a control representation for statistics of the form \eqref{eq:desiredexpectation}. The final tool, the HJB equation on Wasserstein space, will be introduced in Section \ref{sec:mainresults}. 

The Laplace principle stated in Theorem \ref{theo:LDP} is based on the controlled version of \eqref{eq:McKeanVlasovEquation}:
\begin{equation}\label{eq:controlledMcKeanVlasov}
\begin{split}
d\hat{X}^{v,s,\nu}_t&=[b(\hat{X}^{v,s,\nu}_t,\mc{L}(\hat{X}^{v,s,\nu}_t))+\sigma(\hat{X}^{v,s,\nu}_t,\mc{L}(\hat{X}^{v,s,\nu}_t))v(t)]dt+\sigma(\hat{X}^{v,s,\nu}_t,\mc{L}(\hat{X}^{v,s,\nu}_t))dW_t,\\ 
\hat{X}^{v,s,\nu}_s&\sim \nu.
\end{split}
\end{equation}
For the LDP to hold, we need the following assumptions on the particle system \eqref{eq:IPS}, the controlled version of the limiting McKean-Vlasov equation \eqref{eq:controlledMcKeanVlasov}, and the problem data:
\begin{assumption}\label{assumption:forLDP}
Suppose:
\begin{enumerate}[label=(A\arabic*)]
\item \label{assumption:convergenceofinitialconditions} The initial conditions $y$ for \eqref{eq:IPS} are such that there exists $\nu\in \mc{P}_2(\R^d)$ with $\mu^N_y\tto \nu$ as $N\toinf$.
\item \label{assumption:continuityofcoefficients} The coefficients $b$ and $\sigma$ are continuous on $\R^d\times\mc{P}(\R^d).$
\item \label{assumption:uncontrolleduniqueness} For all $N\in \bb{N}$, existence and uniqueness of solutions holds in the strong sense for the system of SDEs \eqref{eq:IPS}.
\item \label{assumption:weaksenseuniqueness}  Weak uniqueness holds for solutions of \eqref{eq:controlledMcKeanVlasov} in the sense that if $\Theta^1,\Theta^2$  are probability measures such that for $i=1,2$: 
\begin{itemize}
\item $\Theta^i$ is the law of some process triple $(\hat{X}^{v_i,s,\nu},v_i,W^i)$ satisfying \eqref{eq:controlledMcKeanVlasov} on some filtered probability space $(\W^i,\F^i,\Prob^i),\br{\F^i_t}_{t\in[s,T]}$ satisfying the usual conditions 
\item $v_i$ is an $\F^i_t$-progressively measurable control with $\E^{\Theta^i}\left[\int_s^T |v_i(t)|^2dt\right]<\infty$ 
\item $W^i$ a standard $\F^i_t$-Brownian motion with $W^i_s=0$
\end{itemize}
and $\Theta^1\circ \vartheta^{-1}=\Theta^2\circ \vartheta^{-1}$ where $\vartheta$ is the identity mapping in the second and third coordinates and the evaluation map at time $s$ in the first coordinate, then $\Theta^1=\Theta^2$.
\item \label{assumption:tightness} For any sequence of controlled processes $\br{\hat{X}^{i,N,s,y}}$ from \eqref{eq:controlledparticles} with $\mc{F}_t$-progressively measurable controls $v^N=(v^N_1,...,v^N_N)$ satisfying  $$\sup_{N\in\bb{N}}\E\left[\frac{1}{N}\sum_{i=1}^N \int_s^T |v^N_i(t)|^2dt\right]<\infty,$$ $\hat{\mu}^N\coloneqq \frac{1}{N}\sum_{i=1}^N \delta_{\hat{X}^{i,N,s,y}}$ is tight as a sequence of $\mc{P}(C([s,T];\R^d))$-valued random variables.
\end{enumerate}
\end{assumption}

Assumption \ref{assumption:forLDP} is taken directly from \cite{BDF} with the exception of \ref{assumption:weaksenseuniqueness}, which is a mildly less restrictive form of weak uniqueness under which the arguments from \cite{BDF} still hold (see Lemma 3.4 in \cite{BCsmallnoise} or Definition 3.6 in \cite{BS}). For verifiable conditions on the data of the problem under which \ref{assumption:weaksenseuniqueness} holds, from Appendix C in \cite{FischerFormofRateFunction} we have that:
\begin{enumerate}[label=(A\arabic*')] 
\item \label{eq:sufficientcondition1} $b,\sigma$ are jointly globally Lipschitz on $\R^d\times \mc{P}(\R^d)$ where $\mc{P}(\R^d)$ is equipped with the bounded Lipschitz metric $d_{BL}(\mu,\nu)\coloneqq\sup_{\br{\phi\in C_{b,L}(\R^d):\norm{\phi}_{b,L}\leq 1}}\int_{\R^d}\phi(x)[\mu(dx)-\nu(dx)]$.
\item \label{eq:sufficientcondition2} $\sigma$ is bounded and $\sigma(x,\mu)=\sigma(\mu)$.
\end{enumerate}
are enough to imply \ref{assumption:continuityofcoefficients}-\ref{assumption:tightness}. Note that the proof given in \cite{FischerFormofRateFunction} that \ref{eq:sufficientcondition1} and \ref{eq:sufficientcondition2} together imply \ref{assumption:weaksenseuniqueness} (Proposition C.1 in \cite{FischerFormofRateFunction}), is based on an erroneous localization argument. See \cite{BCsmallnoise} for a correct proof.

We are now ready to state the LDP for the empirical measure.

\begin{theo}\label{theo:LDP}
Under Assumption \ref{assumption:forLDP}, for any $F\in C_b(C([s,T];\mc{P}(\R^d)))$:
\begin{align}\label{eq:LDP}
\lim_{N\toinf}-\frac{1}{N}\log\E\left[\exp(-NF(\mu^{N,s,y}))\right]=\mc{G}(s,\nu;F)\coloneqq \inf_{\mu\in C([s,T];\mc{P}(\R^d))}\left\{S^\nu_{s,T}(\mu)+F(\mu)\right\},
\end{align}
where $S^\nu_{s,T}:C([s,T];\mc{P}(\R^d))\tto [0,+\infty]$ is given by
\begin{align}\label{eq:ratefunction}
S^\nu_{s,T}(\mu)\coloneqq\inf_{v\in\mc{U}:\mu(t)=\mc{L}(\hat{X}^{v,s,\nu}_t),t\in[s,T]}\frac{1}{2}\E\left[\int_s^T|v(t)|^2dt\right].
\end{align}
In the above we take $\inf\emptyset=+\infty$, and in the expression for $S^\nu_{s,T}(\mu)$ the admissible set of controls $\mc{U}$ is the set of all quadruples $((\W,\F,\Prob),\br{\F_t}_{t\in[s,T]},v,W)$ such that:
\begin{itemize}
\item[1.]  $((\W,\F,\Prob),\br{\F_t})$ forms a stochastic basis satisfying the usual conditions,
\item[2.] $W$ is a standard $m$-dimensional Brownian motion initialized at $W_s=0$,
\item[3.]  $v$ is an $\R^m$-valued $\br{\F_t}$-progressively measurable process such that $\E\left[\int_s^T |v(t)|^2dt\right]<\infty$.
\end{itemize}
We write $v\in\mc{U}$ to denote $((\W,\F,\Prob),\br{\F_t}_{t\in[s,T]},v,W)\in \mc{U}$ in the above. For each $v\in \mc{U}$, $\hat{X}^{v,\nu}_t$ satisfies the controlled McKean-Vlasov equation \eqref{eq:controlledMcKeanVlasov} on the space $(\W,\F,\Prob),\br{\F_t}_{t\in[s,T]}$ with driving Brownian motion $W$.
\end{theo}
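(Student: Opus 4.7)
The plan is to prove the Laplace principle via the weak convergence approach of Dupuis–Ellis, as adapted to the interacting-diffusion setting in \cite{BDF}. The starting ingredient is the Bou\'e–Dupuis variational representation applied to the family of independent driving Brownian motions $\{W^i\}_{i=1}^N$, which gives
\begin{align*}
-\frac{1}{N}\log\E\left[e^{-NF(\mu^{N,s,y})}\right] = \inf_{v^N}\E\left[\frac{1}{2N}\sum_{i=1}^N\int_s^T|v^N_i(t)|^2\,dt + F(\hat{\mu}^{N,s,y})\right],
\end{align*}
where the infimum runs over $\{\F_t\}$-progressively measurable $\R^m$-valued control $N$-tuples and $\hat{\mu}^{N,s,y}$ is the empirical measure associated with the controlled system \eqref{eq:controlledparticles}. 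The theorem then reduces to showing that this variational quantity converges to $\mc{G}(s,\nu;F)$, which I would do by establishing matching $\limsup$ and $\liminf$ bounds.

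For the upper bound ($\limsup_N \leq \mc{G}$), fix $\epsilon>0$ and choose $v \in \mc{U}$ that is $\epsilon$-optimal in \eqref{eq:ratefunction}--\eqref{eq:LDP}, with associated controlled McKean-Vlasov solution $\hat{X}^{v,s,\nu}$ solving \eqref{eq:controlledMcKeanVlasov}. Using an enlarged probability space, I would plug independent copies of $v$ into each particle equation, producing an i.i.d.\ array of controlled McKean-Vlasov solutions. Propagation of chaos for the controlled system, which follows from \ref{assumption:continuityofcoefficients} and \ref{assumption:convergenceofinitialconditions} by a standard coupling argument, then gives $\hat{\mu}^{N,s,y}\Rightarrow \mc{L}(\hat{X}^{v,s,\nu})$ in $C([s,T];\mc{P}(\R^d))$. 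Since the cost identically equals $\frac{1}{2}\E\int_s^T|v|^2\,dt$ and $F$ is bounded and continuous, letting $N\toinf$ and then $\epsilon\to 0$ yields the bound.

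For the lower bound ($\liminf_N \geq \mc{G}$), take $v^N$ that is $\epsilon$-optimal in the $N$-particle variational problem. Boundedness of $F$ gives $\sup_N \frac{1}{N}\sum_i\E\int_s^T|v^N_i|^2\,dt \leq C$, which triggers Assumption \ref{assumption:tightness} and hence tightness of $\hat{\mu}^{N,s,y}$. Enriching the state with the controls, I would form the occupation-measure-valued random variables
\begin{align*}
Q^N \coloneqq \frac{1}{N}\sum_{i=1}^N \delta_{(\hat{X}^{i,N,s,y}, v^N_i, W^i)}
\end{align*}
on an appropriate Polish path space (with $v^N_i$ viewed as an element of an $L^2$-relaxed-control space to recover compactness), and extract a subsequential weak limit $Q$. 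A standard martingale-problem argument using \ref{assumption:continuityofcoefficients} identifies $Q$-almost every triple as a solution of \eqref{eq:controlledMcKeanVlasov}, whereupon Assumption \ref{assumption:weaksenseuniqueness} forces the first-coordinate marginal of $Q$ to be $\mc{L}(\hat{X}^{v^*,s,\nu})$ for some admissible control $v^*$. Combining lower semicontinuity of the $L^2$-cost under this weak convergence with continuity of $F$ and Fatou's lemma gives
\begin{align*}
\liminf_{N\toinf} \E\left[\tfrac{1}{2N}\sum_i\int_s^T|v^N_i|^2 + F(\hat{\mu}^{N,s,y})\right] \geq \tfrac{1}{2}\E\int_s^T|v^*|^2\,dt + F(\mc{L}(\hat{X}^{v^*,s,\nu})) \geq \mc{G}(s,\nu;F).
\end{align*}

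The hard part will be the lower-bound step: rigorously identifying the subsequential limit $Q$ as being supported on bona fide solutions of the controlled McKean-Vlasov equation and then extracting from $Q$ a single admissible control $v^* \in \mc{U}$. This is where the precise form of weak uniqueness \ref{assumption:weaksenseuniqueness}, with the accompanying correction from \cite{BCsmallnoise} to the localization argument of \cite{FischerFormofRateFunction}, plays an essential role, since the naive controls $v^N_i$ may become randomized in the limit and one must track both the empirical measure and the control-randomization jointly in order to apply uniqueness.
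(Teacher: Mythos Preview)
Your outline is correct and is precisely the weak-convergence argument of \cite{BDF} that the paper invokes; the paper's own proof is simply a citation of Theorem 3.1 in \cite{BDF} together with the observation that the contraction principle (via the continuous time-marginal map $\mc{P}(C([s,T];\R^d))\to C([s,T];\mc{P}(\R^d))$, as in Proposition 5.6 of \cite{BS}) transfers the rate function to the path-of-measures space. In other words, you are sketching from scratch what the paper treats as a black box, and the only point you do not make explicit is this contraction step from the path-space LDP of \cite{BDF} to the $C([s,T];\mc{P}(\R^d))$ formulation stated here.
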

\begin{proof}
The result follows from Theorem 3.1 in \cite{BDF} (see also Theorem 5.1 in \cite{FischerFormofRateFunction}) by applying the contraction principle to pose the rate function on $C([s,T];\mc{P}(\R^d))$ rather than $\mc{P}(C([s,T];\R^d))$ as in Proposition 5.6 of \cite{BS}.
\end{proof}

The following corollary of Theorem 3.6 in \cite{BD}, which provides a prelimit expression for the left-hand side of \eqref{eq:LDP} which can be applied to prove Theorem \ref{theo:LDP}, is also critical to our arguments in this paper. In Section \ref{sec:proofs}, we apply it to several auxiliary systems of the form \eqref{eq:IPS} with different choices of the drift term in the course of proving Theorem \ref{theo:logefficient}. The drift terms in the new system have a slightly different form from those in \eqref{eq:IPS}, since they are allowed here to directly depend on the time variable.

To distinguish these auxiliary systems and their solutions from those of the original system \eqref{eq:IPS}, we establish alternative notation here for the solutions of the controlled and uncontrolled auxiliary systems and their empirical measures that we will also use in Section \ref{sec:proofs}. %We write the solutions of the controlled and uncontrolled versions of the auxiliary system \eqref{eq:IPSarbitrarydrift} as $\tilde{X}^{i,N,s,y}$ and $\bar{X}^{i,N,s,y}$, respectively, and the associated empirical measures as $\tilde{\mu}^{N,s,y}_t$ and $\bar{\mu}^{N,s,y}_t$.

\begin{proposition}\label{prop:prelimitLDPexpression}
Let $\tilde{\mu}^{N,s,y}_t=\frac{1}{N}\sum_{i=1}^N \delta_{\tilde{X}^{i,N,s,y}_t},t\in[s,T]$ where $\tilde{X}^{i,N,s,y}_t$ solves the system \eqref{eq:IPSarbitrarydrift} (i.e., \eqref{eq:IPS} with $b$ replaced by $\tilde{b}$):
\begin{align}\label{eq:IPSarbitrarydrift}
d\tilde{X}^{i,N,s,y}_t &= \tilde{b}(t,\tilde{X}^{i,N,s,y}_t,\tilde{\mu}^{N,s,y}_t)dt+\sigma(\tilde{X}^{i,N,s,y}_t,\tilde{\mu}^{N,s,y}_t) d\tilde{W}^i_t,\quad \tilde{X}^{i,N,s,y}_s = y_i
\end{align}
and $\tilde{b},\sigma$ are such that existence and uniqueness holds in the strong sense for the system of SDEs \eqref{eq:IPSarbitrarydrift} and $\tilde{W}^i$ are IID standard $m$-dimensional Brownian motions initialized at $\tilde{W}^i_s=0$. Then for $F\in C_b\left(C([s,T];\mc{P}(\R^d))\right)$:
\begin{align}\label{eq:prelimitrep}
-\frac{1}{N}\log\E\left[\exp(-NF(\tilde{\mu}^{N,s,y}))\right]=\inf_{u^N\in \mc{U}^N}\left\{\E\left[\frac{1}{2N}\sum_{i=1}^N\int_s^T |u^N_i(t)|^2dt\right]+\E[F(\bar{\mu}^{N,s,y})]\right\},
\end{align}
where $\mc{U}^N$ is the space of adapted controls $u^N=(u^N_1,...,u^N_N),u^N_i:[s,T]\tto \R^m$ such that $\E\left[\int_s^T|u^N(t)|^2dt\right]<\infty$, and $\bar{\mu}^{N,s,y}$ is the empirical measure associated with the controlled version of \eqref{eq:IPSarbitrarydrift}, i.e.,
\begin{align*}
d\bar{X}^{i,N,s,y}_t &= \tilde{b}(t,\bar{X}^{i,N,s,y}_t,\bar{\mu}^{N,s,y}_t)dt+ \sigma(\bar{X}^{i,N,s,y}_t,\bar{\mu}^{N,s,y}_t) u(t)dt + \sigma(\bar{X}^{i,N,s,y}_t,\bar{\mu}^{N,s,y}_t) d\tilde{W}^i_t,\quad \bar{X}^{i,N,s,y}_s = y_i.
\end{align*}
%\red{Need to be clearer about adapted to what filtration. Maybe use the version from Fischer's paper \cite{FischerFormofRateFunction}.}
\end{proposition}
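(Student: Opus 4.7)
The plan is to apply the Boué--Dupuis variational formula (Theorem 3.6 of \cite{BD}) to a well-chosen functional of an $Nm$-dimensional Brownian motion. First, observe that by the hypothesis of strong existence and uniqueness for \eqref{eq:IPSarbitrarydrift}, there is a measurable map $\Psi^N : C([s,T];\R^{Nm}) \to C([s,T];\mc{P}(\R^d))$ that sends the sample path of the driving Brownian motion $\tilde{W} \coloneqq (\tilde{W}^1,\ldots,\tilde{W}^N)$ to the empirical measure path $\tilde{\mu}^{N,s,y}$. Since $F \in C_b(C([s,T];\mc{P}(\R^d)))$, the composition $G^N \coloneqq NF \circ \Psi^N$ is a bounded measurable functional of $\tilde{W}$, and in particular the hypotheses of the Boué--Dupuis representation are satisfied.

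Second, I would apply Boué--Dupuis to $G^N$ with respect to the $Nm$-dimensional standard Brownian motion $\tilde{W}$ on the stochastic basis on which \eqref{eq:IPSarbitrarydrift} is posed. This yields
\begin{equation*}
-\log \E\left[\exp(-NF(\tilde{\mu}^{N,s,y}))\right]
\;=\; \inf_{u \in \mc{U}_{Nm}} \E\left[ \tfrac{1}{2}\int_s^T |u(t)|^2\, dt \;+\; NF\left(\Psi^N\!\left(\tilde{W} + \int_s^{\cdot} u(r)\, dr\right)\right)\right],
\end{equation*}
where $\mc{U}_{Nm}$ is the collection of $\R^{Nm}$-valued progressively measurable controls with $\E\int_s^T|u(t)|^2dt < \infty$. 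Writing $u = (u_1,\ldots,u_N)$ with each $u_i$ an $\R^m$-valued component, we have $|u(t)|^2 = \sum_{i=1}^N |u_i(t)|^2$, and the family $\mc{U}_{Nm}$ coincides with $\mc{U}^N$ as defined in the statement.

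Third, I would identify the shifted path $\Psi^N(\tilde{W} + \int_s^{\cdot} u\, dr)$ with the empirical measure $\bar{\mu}^{N,s,y}$ of the controlled system. Plugging $\tilde{W}^i_t + \int_s^t u_i(r)\, dr$ into the defining integral equation for \eqref{eq:IPSarbitrarydrift} and using pathwise uniqueness (so that $\Psi^N$ is unambiguous), the output of $\Psi^N$ applied to this shifted path is precisely the empirical measure driven by the SDE
\begin{equation*}
d\bar{X}^{i,N,s,y}_t = \tilde{b}(t,\bar{X}^{i,N,s,y}_t,\bar{\mu}^{N,s,y}_t)\, dt + \sigma(\bar{X}^{i,N,s,y}_t,\bar{\mu}^{N,s,y}_t)\, u_i(t)\, dt + \sigma(\bar{X}^{i,N,s,y}_t,\bar{\mu}^{N,s,y}_t)\, d\tilde{W}^i_t,
\end{equation*}
with initial conditions $\bar{X}^{i,N,s,y}_s = y_i$, matching the statement. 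Dividing the Boué--Dupuis identity through by $N$ produces the desired representation \eqref{eq:prelimitrep}.

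The main obstacle I anticipate is bookkeeping rather than substance: verifying that the pathwise representation of the solution map commutes with the Girsanov-style shift of the driving Brownian motion (so that the Boué--Dupuis-shifted path corresponds exactly to the controlled particle system defined in the statement), and checking that the class $\mc{U}_{Nm}$ of $\R^{Nm}$-valued progressively measurable controls in the Boué--Dupuis formulation agrees with the class $\mc{U}^N$ of componentwise adapted controls assumed here. Both points are standard applications of the strong existence/uniqueness assumption for \eqref{eq:IPSarbitrarydrift} together with the natural identification $\R^{Nm} \cong (\R^m)^N$, but they are the only nontrivial items to check.
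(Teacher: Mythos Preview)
Your proposal is correct and follows exactly the approach the paper takes: the paper states this proposition as a direct corollary of Theorem 3.6 in \cite{BD} (the Bou\'e--Dupuis variational formula) without giving further details, and your argument spells out precisely the steps needed to pass from that theorem to the claimed representation. The bookkeeping items you flag (the solution map commuting with the shift and the identification of control classes) are indeed the only points requiring care, and they follow from the assumed strong existence and uniqueness as you note.
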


\subsection{Log-efficiency and asymptotic optimality in control-based importance sampling}\label{sec:logefficiency}

%The standard Monte Carlo estimator $\delta(N)$ for \eqref{eq:desiredexpectation} simply draws independent samples  $\mu^{N,s,y,j}_T,j\in\br{1,...,M}$ of $\mu^{N,s,y}_T$:
%\begin{align}\label{eq:montecarloestimator}
%\delta(N) = \frac{1}{M}\sum_{j=1}^M \exp\left(-NG(\mu^{N,s,y,j}_T)\right).
%\end{align}

Recall our definition of the controlled particle system \eqref{eq:controlledparticles}, and let 
\begin{align}\label{eq:Zs}
Z^{i,N,s,y}\coloneqq \exp\biggl(-\int_s^T v^N_i(t,\hat{X}^{i,N,s,y}_t,...,\hat{X}^{i,N,s,y}_t)\cdot d\hat{W}^i_t - \frac{1}{2}\int_s^T |v^N_i(t,\hat{X}^{i,N,s,y}_t,...,\hat{X}^{i,N,s,y}_t)|^2dt\biggr).
\end{align}
By Girsanov's theorem, an unbiased estimator for \eqref{eq:desiredexpectation} is given by
\begin{align}\label{eq:deltahat}
\hat{\delta}_N\coloneqq \frac{1}{M}\sum_{j=1}^M \exp\left(-NG(\hat{\mu}^{N,s,y,j})\right)\prod_{i=1}^N Z^{i,N,s,y,j}
\end{align}
where $(\hat{\mu}^{N,s,y,j},Z^{1,N,s,y,j},...,Z^{N,N,s,y,j})$ are $M$ independent samples of $(\hat{\mu}^{N,s,y},Z^{1,N,s,y}...,Z^{N,N,s,y})$, and
\begin{align}\label{eq:controlledempiricalmeasure}
\hat{\mu}^{i,N,s,y}_t \coloneqq \frac{1}{N}\sum_{i=1}^N \delta_{\hat{X}^{i,N,s,y}_t}.
\end{align} 
We note that the standard Monte Carlo method estimator, which we denote by $\delta_N$, corresponds to \eqref{eq:deltahat} with $v_i^N = 0$, so that $Z^{i,N,s,y,j} \equiv 1$.

We define the \textit{relative error} of the estimator \eqref{eq:deltahat} by 
\begin{align}\label{eq:importancesamplingrelativeerror}
\rho(\hat{\delta}_N) = \frac{1}{\sqrt{M}} \sqrt{ \frac{\E\hat\delta_N^2 - \left[\E\hat\delta_N\right]^2}{\left[\E\hat \delta_N\right]^2} } =  \frac{1}{\sqrt{M}}\sqrt{\frac{\E\biggl[\exp(-2NG(\hat{\mu}^{N,s,y}))\prod_{i=1}^N (Z^{i,N,s,y})^2 \biggr]}{\E\biggl[\exp(-NG(\mu^{N,s,y})) \biggr]^2}-1}.
\end{align}
To control the size of \eqref{eq:importancesamplingrelativeerror} we either need to increase the number of samples $M$ or reduce the variance by choosing $v_i^N$ so that
\begin{align}\label{eq:Rdeltahat}
R(\hat{\delta}_N) \coloneqq \frac{\E\biggl[\exp(-2NG(\hat{\mu}^{N,s,y}))\prod_{i=1}^N (Z^{i,N,s,y})^2 \biggr]}{\E\biggl[\exp(-NG(\mu^{N,s,y})) \biggr]^2},
\end{align}
is close to 1. With this in mind, we define our first measure of the efficiency for an importance sampling scheme \eqref{eq:deltahat} in terms of the asymptotic behavior of $R(\hat{\delta}_N)$ as $N\toinf$:
\begin{defi}\label{definition:logefficiency}
An importance sampling scheme of the form \eqref{eq:deltahat} is called \textbf{log-efficient} if 
\begin{align*}
\lim_{N\toinf}-\frac{1}{N}\log R(\hat{\delta}_N)=0.
\end{align*}
\end{defi}
Note that, in our definition, we do not require that $R(\hat \delta_N) \to 1$ or even $R(\hat \delta_N) = \mathcal{O}(1)$; instead we only ask that as $N\toinf$, $R(\hat \delta_N)$ does not grow exponentially in $N$. %\red{Also note that, by Jensen's inequality, $-\frac{1}{N}\log R(\hat{\delta}_N)\leq 0$ for all $N$.}

%\begin{align*}
%\E\biggl[\hat{\delta}(N) \biggr] &= \E\biggl[ \frac{1}{M}\sum_{j=1}^M \exp(-NG(\hat{\mu}^{N,s,y,j}))\prod_{i=1}^N Z^{i,N,s,y,j}\biggr]\\ 
%&= \E\biggl[\exp(-NG(\hat{\mu}^{N,s,y}))\prod_{i=1}^N Z^{i,N,s,y}\biggr]\\ 
%& = \E\biggl[\exp(-NG(\hat{\mu}^{N,s,y}))\exp\biggl(-\sum_{i=1}^N\int_s^T v^N_i(t,\hat{X}^{1,N,s,y}_t,...,\hat{X}^{N,N,s,y}_t)\cdot d\hat{W}^i_t \\ 
%&- \frac{1}{2}\int_s^T \sum_{i=1}^N|v^N_i(t,\hat{X}^{1,N,s,y}_t,...,\hat{X}^{N,N,s,y}_t)|^2dt\biggr)\biggr]\\ 
%& = \E[\exp(-NG(\mu^N))].
%\end{align*}

%The estimator is unbiased:
%\begin{align*}
%\E[\delta(N)]=\E[\exp(-NG(\mu^{N,s,y}_T))]
%\end{align*}
%and has variance
%\begin{align*}
%\frac{1}{M}\biggl[ \E[\exp(-2NG(\mu^{N,s,y}_T))] - \E[\exp(-NG(\mu^{N,s,y}_T))]^2\biggr].
%\end{align*}

%Thus the relative error of $\delta(N)$ is given by 
%\begin{align}\label{eq:montecarlorelativeerror}
%\rho(\delta(N))\coloneqq \frac{\sqrt{\text{Var}(\delta(N))}}{\E[\delta(N)]} = \frac{1}{\sqrt{M}}\sqrt{\frac{\E[\exp(-2NG(\mu^{N,s,y}_T))]}{\E[\exp(-NG(\mu^{N,s,y}_T))]^2}-1}.
%\end{align}

%For fixed $N$, $\rho(\delta(N))\tto 0$ as $M\toinf$, as expected. Since we want to be able to take $N$ large, we need to understand how $M$ needs to be chosen in terms of $N$ in order to keep $\rho(\delta(N))$ small.

The notion of log-efficiency can also be described in terms of large deviations theory. For the uncontrolled system, from \eqref{eq:LDP} in Theorem \ref{theo:LDP} we know that if $G\in C_b(\mc{P}(\R^d))$ and Assumption \ref{assumption:forLDP} holds, then
\begin{equation}\label{eq:gammas}
\begin{split}
\lim_{N\toinf}\frac{1}{N}\log\E[\exp(-NG(\mu^{N,s,y}_T))] &= - \mc{G}(s,\nu;\mu\mapsto G(\mu_T))\coloneqq -\gamma_1,\\ 
\lim_{N\toinf}\frac{1}{N}\log\E[\exp(-2NG(\mu^{N,s,y}_T))] &= - \mc{G}(s,\nu;\mu\mapsto 2G(\mu_T))\coloneqq -\gamma_2.
\end{split}
\end{equation}
Thus, for the standard Monte Carlo scheme the relative error can be written in terms of $\gamma_1$ and $\gamma_2$ as follows:
\begin{align}\label{eq:expandmontecarlorelativeerror}
\rho(\delta_N) = \frac{1}{\sqrt{M}}\sqrt{\exp(N[2\gamma_1-\gamma_2+o(1)])-1}
\end{align}
Starting from the inequality $\E[\exp(-2NG(\mu^{N,s,y}_T)] \geq \E[\exp(-NG(\mu^N_T)]^2$, taking logarithms, multiplying by $-\frac{1}{N}$, and taking the limit $N\toinf$ shows that $\gamma_2\leq 2\gamma_1.$ Evidently, the relative error $\rho(\delta_N)$ grows exponentially in $N$ if the inequality is strict, i.e., $\gamma := 2\gamma_1-\gamma_2 >0$. Exponentially many samples are therefore required to reduce the relative error below a given tolerance in that case.

Now, we assume that the the relative error for the importance sampling scheme \eqref{eq:importancesamplingrelativeerror} admits the same representation as the standard Monte Carlo importance sampling scheme in \eqref{eq:expandmontecarlorelativeerror}, with analogous quantities $\hat{\gamma}_1$, $\hat{\gamma}_2$, and $\hat{\gamma} = 2\hat{\gamma}_1 - \hat{\gamma}_2$. That is:
\begin{align}\label{eq:expandimportancesamplingrelativeerror}
\rho(\hat\delta_N) = \frac{1}{\sqrt{M}}\sqrt{\exp(N[\hat\gamma+o(1)])-1}.
\end{align}
Note that in the same way as with standard Monte Carlo, we can conclude via Jensen's inequality that $\hat\gamma\geq 0$. Then we can compare the relative error $\rho(\hat \delta_N)$ to the Monte Carlo relative error $\rho(\delta_N)$ by comparing the corresponding quantities $\hat \gamma$ and $\gamma$:
\begin{enumerate}
\item[1.] If $\hat{\gamma} < \gamma$, then the importance sampling scheme is more efficient than standard Monte Carlo in the sense that asymptotically (as $N\toinf$), it yields a smaller relative error.
\item[2.] If $\hat{\gamma} = 0$, then the importance sampling scheme is log-efficient and, if $\gamma>0$, $$\rho(\hat{\delta}_N) = \frac{1}{\sqrt{M}}\sqrt{\exp(o(N))-1} = o(\rho\left(\delta_N\right)).$$
\end{enumerate}
 The object of our first result Theorem \ref{theo:logefficient} presented in the next section is to show conditions under which, for our choice of controls, $\hat{\gamma} < \gamma$ or $\hat \gamma = 0$.
 
Though the assumptions underlying Theorem \ref{theo:logefficient} preclude the relative error of our importance sampling scheme from growing exponentially, extremely fast subexponential growth rates may still be possible. To obtain a more refined result, one needs to study the behavior of the $o(1)$ terms in $R(\hat \delta_N)$. To that end, our second main result, Theorem \ref{theo:expansionanalysis}, assumes an expansion of the prelimit quantities on the left-hand side of \eqref{eq:gammas} leading to an expression for $R(\hat \delta_N)$ of the form 
\begin{align}\label{eqn:Rdeltahatexpansion}
R(\hat \delta_N)=\exp\left(a_{0}N + a_1 + a_2N^{-1} + ...\right).
\end{align}
Note that here $a_0$ plays the same role as $\hat\gamma$ in \eqref{eq:expandimportancesamplingrelativeerror}. Analysis of the coefficients in the power series appearing in the exponential shows that, under stronger regularity assumptions than required for our Theorem \ref{theo:logefficient}, $a_{0} = a_1 = 0$, which implies that the importance sampling scheme has \textit{vanishing} relative error: 
\begin{defi}\label{def:vanishingrelativeerror}
An estimator of the form \eqref{eq:deltahat} is said to have \textbf{vanishing relative error} if, for $\rho(\hat{\delta}_N)$ as in \eqref{eq:importancesamplingrelativeerror},
\begin{align*}
\lim_{N\toinf}\rho(\hat{\delta}_N)=0.
\end{align*}
\end{defi} 

We conclude this chapter by comparing to the small-noise diffusion regime. Our discussion in this section parallels that of the first section in \cite{VW}, in which similar importance sampling scheme for small-noise diffusions is presented along with numerical schemes for estimating the optimal controls. However, we note that in our paper stating the LDP and its underlying assumptions is somewhat more involved due to the measure-valued state space.

 In \cite{VW}, the authors also provide sufficient conditions for a log-efficient importance sampling scheme for small-noise diffusions to have vanishing relative error. For our Theorem \ref{theo:expansionanalysis}, we instead opt to perform an analysis in the spirit of the later \cite{NonAsymptotic}.  The paper \cite{NonAsymptotic} furthered the analysis of the small-noise case by expanding the $o(1)$ terms in the relative error into powers of the small-noise parameter $\epsilon$ to recover log-efficiency from the first-order expansion, and the vanishing relative error result from of \cite{VW} from the second-order expansion (see also the related Section 3 of \cite{HR}). %The results from \cite{NonAsymptotic} required stronger a priori regularity conditions on the solution to the HJB equation compared with the ones from \cite{VW} (see also Section 3 of \cite{HR}).

\section{The HJB Equation and Statement of Main Results}\label{sec:mainresults}

We now turn to the full statement of our importance sampling scheme in terms of the solution to an HJB equation on Wasserstein space, explain its relationship to the rate function \eqref{eq:ratefunction}, and give a formal derivation of the importance sampling scheme and the HJB equation as the zero viscosity limit of a sequence of zero variance controls. 

Starting with the Hamiltonian function which acts on $\mu\in\mc{P}_2(\R^d)$, $p:\R^d\tto\R^d$ and $\Gamma:\R^d\tto \R^{d\times d}$ by:
\begin{align}\label{eq:Hamiltonian}
H_0(\mu,p,\Gamma)& = - \int_{\R^d}b(x,\mu)\cdot p(x)-\frac{1}{2}p(x)\cdot[\sigma\sigma^\top(x,\mu) p(x)]+\frac{1}{2}\sigma\sigma^\top(x,\mu): \Gamma(x) \mu(dx),
\end{align}
the HJB equation on Wasserstein space is given by:
\begin{equation}\label{eq:HJBequation}
\begin{split}
-\partial_t\Psi(t,\nu)+H_0(\nu,\partial_\mu \Psi(t,\nu)[\cdot],\partial_z\partial_\mu \Psi(t,\nu)[\cdot])&=0,\qquad t\in[0,T),\nu\in\mc{P}_2(\R^d),\\ 
\Psi(T,\nu)&=G(\nu),\qquad \nu\in\mc{P}_2(\R^d).
\end{split}
\end{equation}
%We consider the HJB equation with several choices of the terminal condition in the course of  the arguments in Section \ref{sec:proofs}. Though we do not make the dependence on the terminal condition explicit in our notation, we will make clear in the discussion which choice of $G$ we are making when referring to $\Psi$ throughout the paper. 
Recall here that $\partial_\mu \Psi(t,\nu)[\cdot]:\R^d\tto\R^d$, the derivative of $\Psi$ in the measure-valued variable $\mu$, denotes the Lions derivative of $\Psi$ at $\nu$ (Definition \ref{def:lionderivative} in Appendix \ref{appendix:P2differentiation}).

We will make use of the following notion of subsolutions to \eqref{eq:HJBequation}:
\begin{defi}\label{defi:subsolution}
We call $\Psi:[0,T]\times\mc{P}_2(\R^d)\tto \R$ a \textbf{classical subsolution} to \eqref{eq:HJBequation} if:
\begin{enumerate}
\item[(i)] $\Psi\in C^{1,2}([0,T]\times\mc{P}_2(\R^d))$ 
\item[(ii)] $\partial_t\Psi(t,\mu)-H_0(\mu,\partial_\mu\Psi(t,\mu)[\cdot],\partial_z\partial_\mu\Psi(t,\mu)[\cdot])\geq 0$ for all $t\in [0,T),\mu \in \mc{P}_2(\R^d)$
\item[(iii)] $\Psi(T,\mu)\leq G(\mu),\mu\in \mc{P}_2(\R^d)$
\item[(iv)] There exists $C>0$ such that 
\begin{align*}
\sup_{t\in[0,T],\mu\in \mc{P}_2(\R^d),z\in\R^d}|\partial_\mu \Psi(t,\mu)[z]|&\leq C,\\ 
\sup_{t\in[0,T],\mu\in \mc{P}_2(\R^d)}|\partial_z\partial_\mu \Psi(t,\mu)[z]|&\leq C(1+|z|^2),\\ 
\sup_{t\in[0,T],\mu\in \mc{P}_2(\R^d)}\norm{\partial^2_\mu \Psi(t,\mu)[\cdot,\cdot]}_{L^2(\R^d;\mu)\otimes L^2(\R^d;\mu)}&\leq C.
\end{align*}
\end{enumerate}
\end{defi}
Note that under the above definition, a classical subsolution $\Psi$ to \eqref{eq:HJBequation} is also a viscosity subsolution in the sense of Definition 3.5 in \cite{CGKPR1}. Moreover, a classical \textit{solution} $\Psi$ to \eqref{eq:HJBequation} which also satisfies the bounds of item (iv) in Definition \ref{defi:subsolution} is also a classical subsolution.

\begin{remark}\label{remark:connectionbetweenratefunctionandHJB}
Under certain sufficient conditions stated in Proposition 3.1 of \cite{PW}, the HJB equation \eqref{eq:HJBequation} coincides with the large deviations rate function $S^\nu_{s,T}$ from \eqref{eq:ratefunction} in the sense that $\Psi$ solving \eqref{eq:HJBequation} is given by:
\begin{align}\label{eq:classicalsolutionrepresentation}
\Psi(s,\nu)=\inf_{\mu\in C([s,T];\mc{P}(\R^d))}\br{S^\nu_{s,T}(\mu)+G(\mu_T)}.
\end{align}
Stated more concretely, \cite{PW} shows that if $\sigma$ is constant, $b$ is Lipschitz on $\R^d\times\mc{P}_2(\R^d)$, $G(\mu)\leq C\int_{\R^d}|z|^2\mu(dz)$ for some $C>0$, then any $\Psi\in C^{1,2}_b([0,T];\mc{P}_2(\R^d))$ satisfying \eqref{eq:HJBequation} in the classical sense must be given by \eqref{eq:classicalsolutionrepresentation}. Note that while \cite{PW} restricts to the case of Markovian feedback controls for the infimization problem \eqref{eq:LDP}, we know thanks to the convexity of the running cost in the definition of the rate function that the restricted infimization problem is equivalent to the full one (see Proposition 5.8 in \cite{BS}). 

Moreover, Definition 3.5 of \cite{CGKPR1} states a natural notion of viscosity solutions to \eqref{eq:HJBequation}, and Theorem 3.8 therein establishes sufficient conditions for the value function $(s,\nu)\mapsto \inf_{\mu\in C([s,T];\mc{P}(\R^d))}\br{S^\nu_{s,T}(\mu)+G(\mu_T)}$ to be the unique viscosity solution to \eqref{eq:HJBequation}. Although that theorem does not allow for the linear growth in the control present in our drift term nor the quadratic running cost in the definition of $S^\nu_{s,T}$, it appears the results should hold under this same set of assumptions or similar; see Remarks 2.11 and 3.2 in \cite{CGKPR2}. 

Whether the value function is indeed the unique viscosity solution to \eqref{eq:HJBequation} is of no consequence to the results of this paper. However, the connection between viscosity solutions of \eqref{eq:HJBequation} and the rate function from Theorem \ref{theo:LDP} parallels the small-noise regime, and helped us to identify the correct PDE for designing our importance sampling scheme (compare \eqref{eq:classicalsolutionrepresentation} with, e.g., Equation (2.12) in \cite{VW} in the small-noise setting).
\end{remark}

\subsection{Formal derivation of the HJB equation \eqref{eq:HJBequation}}\label{sec:ontheHJBEquation}
In this section, we will see formally how to arrive at the expression for the control used in our importance sampling scheme in terms of the HJB equation on Wasserstein space \eqref{eq:HJBequation}.

Following pp. 1778-1779 of \cite{VW}, we let $\hat\Phi^N:[0,T]\times\oplus_{j=1}^N\R^{d}\tto \R$ be the solution to the backward Kolmogorov equation with terminal condition $\exp(-NG^N(x_1,...,x_N))=\exp(-NG(\mu^N_x))$ associated to the system of SDEs \eqref{eq:IPS}, so that $\hat{\Phi}^N$ satisfies:
\begin{align}\label{eq:hatPhirepresentation}
\hat\Phi^N(t,x)=\E[\exp(-NG(\mu^{N,t,x}_T))],\qquad x\in \R^{Nd},t\in [0,T],
\end{align}
and 
\begin{equation}\label{eq:prelimitBKE}
\begin{split}
\partial_t \hat\Phi^N(t,x)&+\sum_{i=1}^N b(x_i,\mu^N_x)\cdot \partial_{x_i}\hat\Phi^N(t,x)\\&\qquad+\frac{1}{2}\sigma\sigma^\top(x_i,\mu^N_x):\partial^2_{x_i}\hat\Phi^N(t,x)=0,\qquad t\in[0,T),x=(x_1,...,x_N)\in\R^{dN},
\\ 
\hat{\Phi}^N(T,x)&=\exp(-NG(\mu^N_x)),x\in\R^{dN}.
\end{split}
\end{equation}
For fixed $N$, choosing controls
\begin{align}\label{eq:zerovariancecontrolprelimfromhatPhi}
v^N_i(t,x_1,...,x_N)=\sigma^\top(x_i,\mu^N_x)\frac{\partial_{x_i}\hat\Phi^N(t,x_1,...,x_N)}{\hat\Phi^N(t,x_1,...,x_N)}
\end{align}
in the construction of $\hat\delta_N$ from \eqref{eq:deltahat} leads to a \textit{deterministic} estimator  for \eqref{eq:desiredexpectation} with \textit{zero} variance: \begin{align}\label{eq:deterministicscheme}
e^{-NG(\hat{\mu}^{N,s,y}_T)}\prod_{i=1}^N Z^{i,N,s,y}=\hat\Phi^N(s,y_1,...,y_N).
\end{align} The change of measure described above is commonly known as the Doob $h$-transform, and the equality \eqref{eq:deterministicscheme} can be seen via an application of It\^o's formula.

%\red{Need to confirm this holds for any initial time, in \cite{VW} this is only stated for $s=0$. It should be true, though.}

Next defining $\tilde{\Phi}^N(t,x_1,...,x_N)=-\frac{1}{N}\log \hat{\Phi}^N(t,x_1,...,x_N)$, we have that $\tilde\Phi^N$ is a solution to the second-order HJB equation:
\begin{equation}\label{eq:tildePhiN}
  \begin{split}
-\partial_t\tilde\Phi^N(t,x_1,...,x_N)&-\sum_{i=1}^N\biggl\lbrace b(x_i,\mu^N_x)\cdot \partial_{x_i}\tilde\Phi^N(t,x_1,...,x_N)
\\&-\frac{N}{2}|\sigma^\top(x_i,\mu^N_x)\partial_{x_i}\tilde\Phi^N(t,x_1,...,x_N)|^2
\\ &+\frac{1}{2}\sigma\sigma^\top(x_i,\mu^N_x):\partial^2_{x_i}\tilde\Phi^N(t,x_1,...,x_N)\biggr\rbrace=0,\qquad x_1,...,x_N\in\R^d,t\in[0,T),\\ 
\tilde\Phi^N(T,x_1,...,x_N)&=G(\mu^N_x),\qquad x_1,...,x_N\in\R^d
  \end{split}
\end{equation}
and the choice of controls from \eqref{eq:zerovariancecontrolprelimfromhatPhi} is given in terms of $\tilde\Phi^N$ by:
\begin{align}\label{eq:zerovariancecontrolprelim}
v^N_i(t,x_1,...,x_N)=-N\sigma^\top(x_i,\mu^N_x)\partial_{x_i}\tilde\Phi^N(t,x_1,...,x_N).
\end{align}

As per \eqref{eq:deterministicscheme}, in \eqref{eq:zerovariancecontrolprelim} we have constructed a zero-variance estimator for \eqref{eq:desiredexpectation} from solutions of \eqref{eq:tildePhiN}: %Note that if one can solve \eqref{eq:tildePhiN} in order to construct this control, then 
\begin{align}\label{eq:relationtildephitodesiredexpectation}
\exp(-N\tilde{\Phi}^N(s,y_1,...,y_N))=\E[e^{-NG(\mu^{N,s,y}_T)}]=e^{-NG(\hat{\mu}^{N,s,y}_T)}\prod_{i=1}^N Z^{i,N,s,y},
\end{align}
so if one had access to $\tilde{\Phi}^N$ in order to construct $v^N$, one would already be able to compute the desired expectation explicitly.

Until this point, we have merely derived PDEs for the exact expectation that we wish to compute. To derive an importance sampling scheme that is effective for all large values of $N$, we assume that, given the symmetries in the coefficients and terminal condition in \eqref{eq:tildePhiN}, there exists $\Phi^N:[0,T]\times\mc{P}_2(\R^d)\tto \R$ which is fully $C^2$ in the sense of Definition \ref{def:fullyC2} such that $\Phi^N(t,\mu^N_x)=\tilde\Phi^N(t,x_1,...,x_N)$ for all $t\in[0,T]$, $x_1,...,x_N\in\R^d$. Then by Proposition \ref{prop:empprojderivatives}, we get:
\begin{align*}
-\partial_t\Phi^N(t,\mu^N_x)&-\int_{\R^d}b(z,\mu^N_x)\cdot \partial_\mu \Phi^N(t,\mu^N_x)[z]-\frac{1}{2}|\sigma^\top(z,\mu^N_x)\partial_\mu\Phi^N(t,\mu^N_x)[z]|^2\\ 
&+\frac{1}{2}\sigma\sigma^\top(z,\mu^N_x):\partial_z\partial_\mu\Phi^N(t,\mu^N_x)[z]
\\&+\frac{1}{2N}\sigma\sigma^\top(z,\mu^N_x):\partial^2_\mu\Phi^N(t,\mu^N_x)[z,z]\mu^N_x(dz)=0,\qquad x_1,...,x_N\in\R^d,t\in[0,T),\\ 
\Phi^N(T,\mu^N_x)&=G(\mu^N_x),\qquad x_1,...,x_N\in\R^d.
\end{align*}
The zero-variance optimal control from \eqref{eq:zerovariancecontrolprelim} can then be expressed as 
\begin{align*}
v^N_i(t,x_1,...,x_N)=-\sigma^\top(x_i,\mu^N_x)\partial_\mu\Phi^N(t,\mu^N_x)[x_i].
\end{align*}
Finally, supposing that the equation for $\Phi^N$ holds not only for measures of the form $\mu^N_x$, but for all $\nu\in \mc{P}_2(\R^d)$, the above becomes:
\begin{equation}\label{eq:prelimitHJB}
  \begin{split}
-\partial_t\Phi^N(t,\nu)&-\int_{\R^d}b(z,\nu)\cdot \partial_\mu \Phi^N(t,\nu)[z]-\frac{1}{2}|\sigma^\top(z,\nu)\partial_\mu\Phi^N(t,\nu)[z]|^2\\ 
&+\frac{1}{2}\sigma\sigma^\top(z,\nu):\partial_z\partial_\mu\Phi^N(t,\nu)[z]\\
&+\frac{1}{2N}\sigma\sigma^\top(z,\nu):\partial^2_\mu\Phi^N(t,\nu)[z,z]\nu(dz)=0,\quad\nu\in\mc{P}_2(\R^d),t\in[0,T),\\
\Phi^N(T,\nu)&=G(\nu),\qquad \nu\in\mc{P}_2(\R^d).
\end{split}
\end{equation}
As $N\toinf$, we expect that, up to leading order, solutions to \eqref{eq:prelimitHJB} should be well approximated by solutions to the \textit{zero-viscosity} HJB equation \eqref{eq:HJBequation}, which we arrive at by simply setting the $O(1/N)$ term in \eqref{eq:prelimitHJB} to $0$, leading us finally to the HJB equation on Wasserstein space \eqref{eq:HJBequation}. Rigorous justifications for this limit in the case where there is a common driving noise between the particles can be found in \cite{GPW}. 

Our discussion here parallels the construction in Section 3.1 of \cite{VW} of a log-efficient control in the small-noise setting, in which the authors start from the deterministic scheme given by the Doob $h$-transform and take a formal zero-viscosity limit in the small-noise parameter. Given the expression \eqref{eq:relationtildephitodesiredexpectation}, we expect then that at least the $O(N)$ terms in an expression for $\log R(\hat{\delta}_N)$ with the choice of controls from Theorem \ref{theo:logefficient} should vanish, since they are $0$ when considering the controls from \eqref{eq:zerovariancecontrolprelim}. The log-efficiency proved in Theorem \ref{theo:logefficient} shows that this is not only the case for classical solutions to \eqref{eq:HJBequation}, but even certain classical subsolutions.

\subsection{Statement of main results}\label{subsec:mainresultsstatements}

We are now ready to state our main results, Theorems \ref{theo:logefficient} and \ref{theo:expansionanalysis}, the proofs of which are contained in Section \ref{sec:proofs}. The first is on designing a log-efficient importance sampling scheme, and parallels Theorem 8.1 in \cite{DW2} and Theorem 4.1 in \cite{DSW}. It requires the additional two assumptions:
\begin{assumption}\label{assumption:forlogefficiency}
Suppose:
\begin{enumerate}[label=(B\arabic*)]
\item \label{assumption:Gbounded}$G$ in equations \eqref{eq:desiredexpectation} and \eqref{eq:HJBequation} is bounded and continuous on $\mc{P}(\R^d)$
\item \label{assumption:sigmabounded}$\sigma$ is bounded.
\end{enumerate}
\end{assumption}
\begin{theo}\label{theo:logefficient}
Suppose $\Psi$ is a classical subsolution to \eqref{eq:HJBequation} in the sense of Definition \ref{defi:subsolution} and that Assumptions \ref{assumption:forLDP} and \ref{assumption:forlogefficiency} hold. Then the importance sampling scheme from \eqref{eq:deltahat} with the choice of controls 
\begin{align}\label{eq:optimalcontrol}
v_i^N(t,x_1,...,x_N)=-\sigma^\top\left(x_i,\frac{1}{N}\sum_{j=1}^N \delta_{x_j}\right)\partial_\mu \Psi\left(t,\frac{1}{N}\sum_{j=1}^N \delta_{x_j}\right)[x_i]
\end{align}
has the property that 
\begin{align*}
\lim_{N\toinf}-\frac{1}{N}\log R(\hat{\delta}_N)&\geq \Psi(s,\nu)-\gamma_1,
\end{align*}
where $R(\hat{\delta}_N)$ is as in \eqref{eq:Rdeltahat}, $\gamma_1$ is as in \eqref{eq:gammas}, $s$ is the initial time in \eqref{eq:IPS}, and $\nu$ is as in \ref{assumption:convergenceofinitialconditions}. In particular, if $\Psi(s,\nu)\geq \gamma_2-\gamma_1$, then this importance sampling scheme performs better than the standard Monte Carlo estimator in the sense that it admits an expansion of the form \eqref{eq:expandimportancesamplingrelativeerror} such that $\hat{\gamma}\leq 2\gamma_1-\gamma_2$ (recalling here \eqref{eq:expandmontecarlorelativeerror}).

Moreover, if $\Psi(s,\nu)=\gamma_1$, then this importance sampling scheme is log-efficient in the sense of Definition \ref{definition:logefficiency}.
\end{theo}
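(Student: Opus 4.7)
The plan is to derive a pointwise almost-sure bound relating $\prod_{i=1}^N Z^{i,N,s,y}\,e^{-NG(\hat\mu^{N,s,y}_T)}$ to $e^{-N\Psi(s,\nu^N)}$, where $\nu^N=\mu^N_y$, via It\^o's formula on $\Psi$ combined with the HJB subsolution property, and then to extract the claimed bound on $R(\hat\delta_N)$ by multiplying this pointwise estimate by one further copy of $\prod_{i=1}^N Z^{i,N,s,y}\,e^{-NG(\hat\mu^{N,s,y}_T)}$ before taking expectations, so that Girsanov's theorem collapses the resulting factor to the uncontrolled expectation $\E[e^{-NG(\mu^{N,s,y}_T)}]\sim e^{-N\gamma_1}$. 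Squaring the pointwise bound directly would only produce $\liminf_N -\tfrac{1}{N}\log R(\hat\delta_N) \geq 2(\Psi(s,\nu)-\gamma_1)$, which is strictly weaker than $\Psi(s,\nu)-\gamma_1$ whenever $\Psi(s,\nu)<\gamma_1$; the ``multiply-then-Girsanov'' step is exactly what removes the errant factor of two.

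For the pointwise estimate, apply It\^o to $\Psi(t,\hat\mu^{N,s,y}_t)$ via the projection formula for Lions derivatives at empirical measures from Proposition \ref{prop:empprojderivatives}. With the specific choice $v^N_i=-\sigma^\top\partial_\mu\Psi[\hat X^i]$ one has $\partial_\mu\Psi\cdot\sigma v^N_i=-|\sigma^\top\partial_\mu\Psi|^2$, and combining with the subsolution inequality \ref{defi:subsolution}(ii) the drift of $\Psi(t,\hat\mu^{N,s,y}_t)$ is at least $-\tfrac{1}{2}\int|\sigma^\top\partial_\mu\Psi|^2\,d\hat\mu^N$ plus an $O(1/N)$ correction $e_N=\tfrac{1}{2N}\int\sigma\sigma^\top:\partial_\mu^2\Psi[z,z]\,d\hat\mu^N(z)$ from the second-order Lions derivative. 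Integrating in $t$ from $s$ to $T$, using \ref{defi:subsolution}(iii) so that $\Psi(T,\hat\mu^N_T)\leq G(\hat\mu^N_T)$, and recognizing $\tfrac12\sum_i\int|v^N_i|^2\,dt$ and $\sum_i\int v^N_i\cdot d\hat W^i$ as the constituents of $\log\prod_i Z^{i,N,s,y}$ from \eqref{eq:Zs} produces the pointwise almost-sure inequality
\[
\prod_i Z^{i,N,s,y}\cdot e^{-NG(\hat\mu^{N,s,y}_T)}\;\leq\; e^{-N\Psi(s,\nu^N)-NE_N},\qquad NE_N := N\!\int_s^T\! e_N\,dt.
\]

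Multiplying by $\prod_i Z^{i,N,s,y}\,e^{-NG(\hat\mu^{N,s,y}_T)}$ (both sides nonnegative), taking expectations, and invoking Girsanov's theorem (Novikov holds since $v^N$ is bounded by \ref{defi:subsolution}(iv) and \ref{assumption:sigmabounded}) via the unbiasedness identity $\E[\prod_i Z^{i,N,s,y}\,F(\hat X)]=\E[F(X)]$ yields
\[
\E\bigl[(\prod_i Z^{i,N,s,y})^2\,e^{-2NG(\hat\mu^{N,s,y}_T)}\bigr]\;\leq\;e^{-N\Psi(s,\nu^N)}\,\E\bigl[e^{-NE_N}\,e^{-NG(\mu^{N,s,y}_T)}\bigr].
\]
Provided $|NE_N|$ is controlled so that the $e^{-NE_N}$ factor contributes only $e^{o(N)}$, dividing by $\E[e^{-NG(\mu^N_T)}]^2$, taking $-\tfrac{1}{N}\log$, and sending $N\to\infty$ while using Theorem \ref{theo:LDP} for $-\tfrac{1}{N}\log\E[e^{-NG(\mu^N_T)}]\to\gamma_1$ and the $C^{1,2}$ regularity of $\Psi$ together with \ref{assumption:convergenceofinitialconditions} for $\Psi(s,\nu^N)\to\Psi(s,\nu)$ yields $\liminf_N-\tfrac{1}{N}\log R(\hat\delta_N)\geq\Psi(s,\nu)-\gamma_1$. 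The two ``in particular'' statements then follow by comparing the resulting $\hat\gamma\leq\gamma_1-\Psi(s,\nu)$ with $\gamma=2\gamma_1-\gamma_2$ together with $\hat\gamma\geq 0$ from Jensen's inequality.

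The main technical obstacle is controlling the residual $NE_N$: its integrand involves the diagonal Lions second derivative $\partial_\mu^2\Psi[z,z]$, while the bound in \ref{defi:subsolution}(iv) only supplies an $L^2(\mu)\otimes L^2(\mu)$ norm control on $\partial_\mu^2\Psi$, not a pointwise or $L^1$-diagonal bound. One must therefore leverage additional structure --- the joint continuity built into the $C^{1,2}$ class (Definition \ref{def:fullyC2}), the boundedness of $\sigma$ from \ref{assumption:sigmabounded}, and the exchangeability of the particles --- to obtain a subexponential-in-$N$ bound on this diagonal contribution. Secondary points to verify rigorously are applicability of the Wasserstein It\^o formula for classical subsolutions in this class and continuity of $\Psi(s,\cdot)$ along the weak convergence $\nu^N\to\nu$ from Assumption \ref{assumption:forLDP}.
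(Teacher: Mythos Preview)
Your approach is correct and genuinely different from the paper's. The paper first applies Girsanov (Lemma~\ref{lemma:RhatnumeratorGirsanov}) to rewrite the second moment as an expectation along particles with drift $b-\sigma v$, then invokes the prelimit variational representation of Proposition~\ref{prop:prelimitLDPexpression} to turn this into an infimum over controls $u^N$; It\^o's formula is applied to $\Psi$ along the \emph{arbitrarily controlled} process $\bar X$, a square-completion produces a valid running cost for Proposition~\ref{prop:prelimitLDPexpression} with $\tilde b=b$, and infimizing recovers $-\tfrac{1}{N}\log\E[e^{-NG(\mu^N_T)}]$. Your route bypasses the variational representation entirely: It\^o along $\hat X$ itself yields the pointwise a.s.\ bound, and the ``multiply-then-Girsanov'' step directly produces the factor $\E[e^{-NG(\mu^N_T)}]$. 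This is more elementary and closer in spirit to the verification arguments in the small-noise literature; the paper's route is more in the Dupuis--Ellis weak-convergence style and makes the role of the rate function more transparent.

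Regarding your ``main technical obstacle'': it dissolves via the same Cauchy--Schwarz computation the paper carries out. Writing the diagonal sum as $\tfrac{1}{N^2}\sum_{i,j}|\partial_\mu^2\Psi[\cdot,\cdot]|\,\1_{i=j}$ and applying Cauchy--Schwarz over the double index gives, for \emph{any} $\mu\in\mc P_2(\R^d)$ and in particular for the (random) empirical measure, the purely algebraic bound
\[
\tfrac{1}{N}\int_{\R^d}\bigl|\partial_\mu^2\Psi(t,\mu)[z,z]\bigr|\,\mu(dz)\;\leq\;\tfrac{1}{\sqrt N}\,\norm{\partial_\mu^2\Psi(t,\mu)}_{L^2(\mu)\otimes L^2(\mu)}\;\leq\;\tfrac{C}{\sqrt N}.
\]
Combined with boundedness of $\sigma$ this gives $|NE_N|\leq C\sqrt N\,(T-s)$ \emph{pointwise almost surely}, which is exactly what your argument needs (the paper only requires this bound in expectation, uniformly in $u^N$). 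So no additional structure beyond \ref{defi:subsolution}(iv) and \ref{assumption:sigmabounded} is needed, and the residual factor is $e^{O(\sqrt N)}=e^{o(N)}$ as you anticipated. Your secondary concerns (Wasserstein It\^o formula, continuity of $\Psi(s,\cdot)$ along $\nu^N\to\nu$) are handled in the paper exactly as you would, and are shared by both approaches.
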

As we discussed in Section \ref{sec:background}, \cite{VW} provides sufficient conditions in terms of a concept of ``uniform log-efficiency'' for the log-efficient importance sampling estimator in their small-noise setting to have vanishing relative error (Definition \ref{def:vanishingrelativeerror}). It is not immediately clear how to verify such conditions in our setting; their proof relies heavily on local regularity results from the theory of (standard) first-order HJB equations which we cannot directly apply to our infinite-dimensional HJB equations. In \cite{NonAsymptotic}, the author takes a different approach, where an asymptotic expansion in the small-noise parameter of the HJB equation associated to the Doob $h$-transform (see \eqref{eq:prelimitHJB} herein) is used to establish that their log-efficient importance sampling estimator in fact has vanishing relative error. This is done by establishing an expression of the form \eqref{eqn:Rdeltahatexpansion}, where the $a_k$'s are expressed in terms of solutions to different PDEs. Vanishing relative error can then be established by showing $a_0=a_1=0$.

In order to carry out a similar analysis here, we will likewise derive PDEs whose solutions can be used to carry out an asymptotic expansion analysis in Theorem \ref{theo:expansionanalysis}. Firstly, let $\phi_0$ denote a classical $C^{1,2}_b([0,T]\times\mc{P}_2(\R^d))$ solution to \eqref{eq:HJBequation}. Using the control from \eqref{eq:optimalcontrol} with $\phi_0$ in the place of $\Psi$, we obtain an expression for the numerator of $R(\hat{\delta}_N)$ from \eqref{eq:Rdeltahat}, which results from applying Girsanov's theorem, the Feynman-Kac's formula, and a log transformation. Concretely, letting $\Xi^N$ satisfy:

\begin{equation}\label{eq:kostas3.3}
  \begin{split}
\partial_t\Xi^N(t,\nu)&+\int_{\R^d}[b(z,\nu)-\sigma(z,\nu) v(t,\nu,z)]\cdot \partial_\mu \Xi^N(t,\nu)[z]+\frac{1}{2}\sigma\sigma^\top(z,\nu):\partial_z\partial_\mu \Xi^N(t,\nu)[z]\nu(dz)\\ 
&-\int_{\R^d}\frac{1}{2}|\sigma^\top(z,\nu) \partial_\mu \Xi^N(t,\nu)[z]|^2+|v(t,\nu,z)|^2\nu(dz)\\ 
&+\frac{1}{2N}\int_{\R^d}\sigma\sigma^\top(z,\nu):\partial^2_\mu\Xi^N(t,\nu)[z,z]\nu(dz)=0,\qquad t\in [0,T), \nu\in\mc{P}_2(\R^d),\\ 
\Xi^N(T,\nu)&=2G(\nu),\qquad \nu\in\mc{P}_2(\R^d),\\
v(t,\nu,x)&= -\sigma^\top(x,\nu)\partial_\mu\phi_0(t,\nu)[x],\qquad t\in[0,T),x\in\R^d,\nu\in\mc{P}_2(\R^d),
  \end{split}
\end{equation} 
we have 
\begin{align*}
\Xi^N(t,\mu^N_x)=-\frac{1}{N}\log\E\biggl[\exp(-2NG(\hat{\mu}^{N,t,x}_T))\prod_{i=1}^N (Z^{i,N,t,x})^2 \biggr],\quad t\in [0,T],x\in\R^{dN}.
\end{align*}

Note that this corresponds to Equation (3.3) in the small-noise setting of \cite{NonAsymptotic}. 

Expanding $\Xi^N(t,\mu)=\xi_0(t,\mu)+o(1)$, we expect that $\xi_0$ solves the ``zero viscosity HJB equation'' obtained by setting the final $\mc{O}(1/N)$ term in \eqref{eq:kostas3.3} equal to $0$, that is:
\begin{equation}\label{eq:xi0}
  \begin{split}
&\partial_t\xi_0(t,\mu)+\int_{\R^d}[b(z,\mu)-\sigma(z,\mu) v(t,\mu,z)]\cdot \partial_\mu \xi_0(t,\mu)[z]\\
&+\frac{1}{2}\sigma\sigma^\top(z,\mu):\partial_z\partial_\mu \xi_0(t,\mu)[z]\mu(dz)-\int_{\R^d}\frac{1}{2}|\sigma^\top(z,\mu) \partial_\mu \xi_0(t,\mu)[z]|^2
\\&+|\sigma^\top(z,\mu) \partial_\mu\phi_0(t,\mu)[z]|^2\mu(dz)=0,\qquad t\in [0,T),\mu\in\mc{P}_2(\R^d),\\ 
\xi_0(T,\mu)&=2G(\mu),\qquad \mu\in\mc{P}_2(\R^d),
  \end{split}
\end{equation}
where $v$ is as in \eqref{eq:kostas3.3}. Note that, by the same logic, expanding $\Phi^N$ from \eqref{eq:prelimitHJB} as $\Phi^N(t,\mu)=\phi_0(t,\mu)+o(1)$, we expect that $\phi_0(t,\mu)$ solves \eqref{eq:HJBequation}. 

We will also consider higher order terms in the following series expansions of  $\Phi^N$ from \eqref{eq:prelimitHJB} and $\Xi^N$ from \eqref{eq:kostas3.3}: 
\begin{align}\label{eq:PhiNexpansion}
\Phi^N(t,\mu) =\phi_0(t,\mu)+\frac{1}{N}\phi_1(t,\mu)+\frac{1}{N^2}\phi_2(t,\mu)+...+\frac{1}{N^k}\phi_k(t,\mu)+o(1/N^k),
\end{align}
and 
\begin{align}\label{eq:XiNexpansion}
\Xi^N(t,\mu)=\xi_0(t,\mu)+\frac{1}{N}\xi_1(t,\mu)+\frac{1}{N^2}\xi_2(t,\mu)+...+\frac{1}{N^k}\xi_k(t,\mu)+o(1/N^k)
\end{align}

for some $k\in \bb{N}$ and all $t\in [0,T],\mu\in\mc{P}_2(\R^d)$.

Matching terms of the same order upon inserting this ansatz into the equations for $\Phi^N$ and $\Xi^N$, we expect that 
\begin{equation}\label{eq:phik}
  \begin{split}
\partial_t\phi_k(t,\mu)&+\int_{\R^d}b(z,\mu)\cdot \partial_\mu \phi_k(t,\mu)[z]+\frac{1}{2}\sigma\sigma^\top(z,\mu):\partial_z\partial_\mu \phi_k(t,\mu)[z]\mu(dz)\\
&+\int_{\R^d}\frac{1}{2}\sigma\sigma^\top(z,\mu):\partial^2_\mu \phi_{k-1}(t,\mu)[z,z]\mu(dz)\\ 
&-\int_{\R^d}\sum_{i+j=k,0\leq i<j}\langle \sigma^\top(z,\mu) \partial_\mu \phi_i(t,\mu)[z],\sigma^\top(z,\mu) \partial_\mu \phi_j(t,\mu)[z]\rangle\\ 
&+\frac{1}{2}|\sigma^\top(z,\mu)\partial_\mu\phi_{k/2}(t,\mu)[z]|^2\1_{k/2\in\bb{N}}\mu(dz)=0,\quad \mu\in\mc{P}_2(\R^d),t\in[0,T),\\ 
\phi_k(T,\mu)&=0,\qquad \mu\in\mc{P}_2(\R^d)
  \end{split}
\end{equation}
and 
\begin{equation}\label{eq:xik}
  \begin{split}
\partial_t\xi_k(t,\mu)&+\int_{\R^d}\left[b(z,\mu)+\sigma(z,\mu) \sigma^\top(z,\mu) \partial_\mu\phi_0(t,\mu)[z]\right]\cdot \partial_\mu \xi_k(t,\mu)[z]\\ 
&+\frac{1}{2}\sigma\sigma^\top(z,\mu):\partial_z\partial_\mu \xi_k(t,\mu)[z]\mu(dz)\\ 
&+\int_{\R^d}\frac{1}{2}\sigma\sigma^\top(z,\mu):\partial^2_\mu \xi_{k-1}(t,\mu)[z,z]\mu(dz)\\ 
&-\int_{\R^d}\sum_{i+j=k,0\leq i<j}\langle \sigma^\top(z,\mu) \partial_\mu \xi_i(t,\mu)[z],\sigma^\top(z,\mu) \partial_\mu \xi_j(t,\mu)[z]\rangle\\ 
&+\frac{1}{2}|\sigma^\top(z,\mu)\partial_\mu\xi_{k/2}(t,\mu)[z]|^2\1_{k/2\in\bb{N}}\mu(dz)=0,\qquad\mu\in\mc{P}_2(\R^d),t\in[0,T),\\ 
\xi_k(T,\mu)&=0,\qquad \mu\in\mc{P}_2(\R^d)
\end{split}
\end{equation}
for all $k\geq 1$. Note that these correspond to the PDEs found in Theorem 3.3 of \cite{NonAsymptotic}.

Both of these equations fit the form of Equation (1.2) in \cite{DF1}, so under sufficient regularity assumptions we would have by Theorem 3.8 therein that for $k\geq 1$:
\begin{equation}\label{eq:phikstochrep}
  \begin{split}
\phi_k(t,\mu)&=\E\biggl[\int_t^T\frac{1}{2}\sigma\sigma^\top(Z^{t,\mu}_\tau,\mc{L}(Z^{t,\mu}_\tau)): \partial^2_\mu\phi_{k-1}(\tau,\mc{L}(Z^{t,\mu}_\tau))[Z^{t,\mu}_\tau,Z^{t,\mu}_\tau] \\ 
&-\sum_{i+j=k,0\leq i<j<k}\langle \sigma^\top(Z^{t,\mu}_\tau,\mc{L}(Z^{t,\mu}_\tau)) \partial_\mu \phi_i(\tau,\mc{L}(Z^{t,\mu}_\tau))[Z^{t,\mu}_\tau],\sigma^\top(Z^{t,\mu}_\tau,\mc{L}(Z^{t,\mu}_\tau)) \partial_\mu \phi_j(\tau,\mc{L}(Z^{t,\mu}_\tau))[Z^{t,\mu}_\tau]\rangle \\ 
&-\frac{1}{2}|\sigma^\top(Z^{t,\mu}_\tau,\mc{L}(Z^{t,\mu}_\tau))\partial_\mu\phi_{k/2}(\tau,\mc{L}(Z^{t,\mu}_\tau))[Z^{t,\mu}_\tau]|^2\1_{k/2\in\bb{N}}d\tau\biggr], \\ 
dZ^{t,\mu}_\tau&=\biggl[b(Z^{t,\mu}_\tau,\mc{L}(Z^{t,\mu}_\tau))-\sigma\sigma^\top(Z^{t,\mu}_\tau,\mc{L}(Z^{t,\mu}_\tau))\partial_\mu\phi_0(\tau,\mc{L}(Z^{t,\mu}_\tau))[Z^{t,\mu}_\tau]\biggr]d\tau+\sigma(Z^{t,\mu}_\tau,\mc{L}(Z^{t,\mu}_\tau)) dW_\tau,Z^{t,\mu}_t\sim \mu
  \end{split}
\end{equation}
and 
\begin{equation}\label{eq:xikstochrep}
  \begin{split}
\xi_k(t,\mu)&=\E\biggl[\int_t^T\frac{1}{2}\sigma\sigma^\top(Y^{t,\mu}_\tau,\mc{L}(Y^{t,\mu}_\tau)) :\partial^2_\mu\xi_{k-1}(\tau,\mc{L}(Y^{t,\mu}_\tau))[Y^{t,\mu}_\tau,Y^{t,\mu}_\tau] \\ 
&-\sum_{i+j=k,0\leq i<j<k}\langle \sigma^\top(Y^{t,\mu}_\tau,\mc{L}(Y^{t,\mu}_\tau)) \partial_\mu \xi_i(\tau,\mc{L}(Y^{t,\mu}_\tau))[Y^{t,\mu}_\tau],\\ 
&\hspace{6cm}\sigma^\top(Y^{t,\mu}_\tau,\mc{L}(Y^{t,\mu}_\tau)) \partial_\mu \xi_j(\tau,\mc{L}(Y^{t,\mu}_\tau))[Y^{t,\mu}_\tau]\rangle\\ 
&-\frac{1}{2}|\sigma^\top(Y^{t,\mu}_\tau,\mc{L}(Y^{t,\mu}_\tau))\partial_\mu\xi_{k/2}(\tau,\mc{L}(Y^{t,\mu}_\tau))[Y^{t,\mu}_\tau]|^2\1_{k/2\in\bb{N}}d\tau\biggr]\\ 
dY^{t,\mu}_\tau&=\biggl[b(Y^{t,\mu}_\tau,\mc{L}(Y^{t,\mu}_\tau))+\sigma\sigma^\top(Y^{t,\mu}_\tau,\mc{L}(Y^{t,\mu}_\tau))\partial_\mu \phi_0(\tau,\mc{L}(Y^{t,\mu}_\tau))[Y^{t,\mu}_\tau],\\
&-\sigma\sigma^\top(Y^{t,\mu}_\tau,\mc{L}(Y^{t,\mu}_\tau))\partial_\mu\xi_0(\tau,\mc{L}(Y^{t,\mu}_\tau))[Y^{t,\mu}_\tau]\biggr]d\tau+\sigma(Y^{t,\mu}_\tau,\mc{L}(Y^{t,\mu}_\tau)) dW_\tau,Y^{t,\mu}_t\sim \mu.
  \end{split}
\end{equation}

%The analogous formal analysis in our setting goes as follows: Z: I think we can establish the needed equations for the statement of Theorem \ref{theo:expansionanalysis} here.}%However, in the method of \cite{NonAsymptotic}, the required consequence of this regularity is that it provides asymptotic expansions of the numerator and denominator in $R(\hat{\delta})$ from \eqref{eq:Rdeltahat}. Supposing we have an analogous expansion up to second-order with sufficient regularity, in Theorem \ref{theo:expansionanalysis} we show that this would in fact imply vanishing relative error for the importance sampling scheme proposed in Theorem \ref{theo:logefficient}.
As we will see, the $a_k$'s from \eqref{eqn:Rdeltahatexpansion} will be realized in terms of the solutions to the PDEs \eqref{eq:phik} and \eqref{eq:xik} via the representation:
\begin{align*}
a_k=2\phi_k(s,\mu^N_y)-\xi_k(s,\mu^N_y),\quad k\in\bb{N},
\end{align*}
where $s\in [0,T]$ is the initial time for the particles \eqref{eq:IPS}, $y\in \oplus_{i=1}^\infty \R^d$ encodes their initial conditions, and we use the empirical measure notation from the end of Section \ref{sec:notation}. See the proof of Theorem \ref{theo:logefficient} in Subsection \ref{subsec:theo2proof} for more details.

 In order to generate a single realization of $\mu^{N,s,y}$, we need to simulate a system of $N$ SDEs. Thus, with $R(\hat\delta_N)$ as in \eqref{eq:Rdeltahat}, the quantity
\begin{align}\label{eq:totalnumberofparticles}
T(N)\coloneqq N[R(\hat\delta_N)-1],
\end{align}
is proportional to the maximum number of particles which need to be simulated (and therefore the computational work required) in order for the relative error from \eqref{eq:importancesamplingrelativeerror} to be below a certain threshold. Put explicitly, to achieve a certain relative error $c$, we require at least $\frac{R(\hat\delta_N)-1}{c^2}$ samples of $(\hat{\mu}^{N,s,y},Z^{1,N,s,y}...,Z^{N,N,s,y})$. 
Therefore, the total number of particles simulated will need to be at least $T(N)/c^2$. Note, of course, that even if $R(\hat\delta_N)=1,$ that we will need to simulate at least $N$ particles to get a realization of $\hat{\mu}^{N,s,y}$.  

This motivates us to go beyond studying $\phi_0,\xi_0,\phi_1,$ and $\xi_1$ in order to prove $a_0=a_1=0$ in \eqref{eqn:Rdeltahatexpansion}, and to further study $a_2$. We find that if we assume third-order expansions of the form \eqref{eq:PhiNexpansion} and \eqref{eq:XiNexpansion} that have sufficient regularity properties, then not only does the relative error vanish, but it vanishes fast enough that $T(N)$ is bounded as $N\toinf$. This further suggests that the relative error of our importance sampling scheme is expected to vanish like $\mc{O}(1/\sqrt{N})$ as $N\toinf$ if the coefficients of the interacting particle system \eqref{eq:IPS} and the target function $G$ from \eqref{eq:desiredexpectation} are sufficiently regular.

\begin{theo}\label{theo:expansionanalysis}
Let Assumption \ref{assumption:forlogefficiency} and Assumption \ref{assumption:forLDP} \ref{assumption:convergenceofinitialconditions}-\ref{assumption:weaksenseuniqueness} hold. Assume also that $|b(x,\mu)|^2\leq C(1+|x|^2+\int|z|^2\mu(dz))$ for all $x\in\R^d$ and $\mu\in\mc{P}_2(\R^d)$. Consider $\Phi^N,\Xi^N:[0,T]\times\mc{P}_2(\R^d)\tto \R$ from \eqref{eq:prelimitHJB} and \eqref{eq:kostas3.3} respectively. Suppose these are unique $C^{1,2}_b([0,T]\times\mc{P}_2(\R^d))$ solutions admitting expansions of the form \eqref{eq:PhiNexpansion} and \eqref{eq:XiNexpansion} respectively up to $k=1$, and that $\phi_0,\xi_0,\phi_1,\xi_1,$ are the unique $C^{1,2}_b([0,T]\times\mc{P}_2(\R^d))$ solutions to \eqref{eq:HJBequation},\eqref{eq:xi0},\eqref{eq:phik},\eqref{eq:xik} with $k=1$ respectively, with $\phi_1,\xi_1$ admitting the stochastic representations \eqref{eq:phikstochrep},\eqref{eq:xikstochrep} with $k=1$ respectively. Then the importance sampling scheme from Theorem \ref{theo:logefficient} with $\phi_0$ in the place of $\Psi$ has vanishing relative error in the sense of Definition \ref{def:vanishingrelativeerror}. 

If further we assume expansions of the form \eqref{eq:PhiNexpansion} and \eqref{eq:XiNexpansion} up to $k=2$ and that $\phi_2,\xi_2$ are the unique $C^{1,2}_b([0,T]\times\mc{P}_2(\R^d))$ solutions to \eqref{eq:phik},\eqref{eq:xik} with $k=2$ respectively admitting the representations \eqref{eq:phikstochrep},\eqref{eq:xikstochrep} with $k=2$ respectively, then 
\begin{align}\label{eq:TNlimit}
\lim_{N\toinf}T(N)=\E\biggl[\int_s^T|\sigma^\top(\hat{X}^{u,s,\nu}_t,\mc{L}(\hat{X}^{u,s,\nu}_t)) \partial_\mu \phi_1(t,\mc{L}(\hat{X}^{u,s,\nu}_t))[\hat{X}^{u,s,\nu}_t]|^2dt\biggr]
\end{align}
where $\hat{X}^{u,s,\nu}$ is as in \eqref{eq:controlledMcKeanVlasov} with $u(t)=-\sigma^\top(\hat{X}^{u,s,\nu}_t,\mc{L}(\hat{X}^{u,s,\nu}_t))\partial_\mu \phi_0(t,\mc{L}(\hat{X}^{u,s,\nu}_t))[\hat{X}^{u,s,\nu}_t],t\in[s,T]$.

%\red{Does $T(N)$ really given any interesting information? The fact that we have vanishing relative error already tells us that in the limit we should only need to compute one realization of the ensemble of particles. Maybe it is more accurate to describe $\lim_{N\toinf}T(N)$ being bounded as showing the relative error is $O(1/\sqrt{N})$.}
\end{theo}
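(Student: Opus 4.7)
The plan is to exploit the identities
$$\Phi^N(s,\mu^N_y)=-\frac{1}{N}\log\E\bigl[e^{-NG(\mu^{N,s,y}_T)}\bigr],\qquad \Xi^N(s,\mu^N_y)=-\frac{1}{N}\log\E\Bigl[e^{-2NG(\hat\mu^{N,s,y}_T)}\prod_{i=1}^N(Z^{i,N,s,y})^2\Bigr],$$
which together with the definition of $R(\hat\delta_N)$ in \eqref{eq:Rdeltahat} yield $\log R(\hat\delta_N)=N\bigl[2\Phi^N(s,\mu^N_y)-\Xi^N(s,\mu^N_y)\bigr]$. Substituting the assumed expansions \eqref{eq:PhiNexpansion} and \eqref{eq:XiNexpansion} gives
$$\log R(\hat\delta_N)=N\,a_0(s,\mu^N_y)+a_1(s,\mu^N_y)+N^{-1}a_2(s,\mu^N_y)+o(N^{-1}),\qquad a_k:=2\phi_k-\xi_k,$$
so everything reduces to showing $a_0\equiv a_1\equiv 0$ and computing $a_2(s,\nu)$.

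The vanishing of $a_0$ and $a_1$ is obtained by uniqueness arguments applied to the limiting PDEs. Substituting the ansatz $\xi_0=2\phi_0$ with $v=-\sigma^\top\partial_\mu\phi_0$ into \eqref{eq:xi0} and simplifying, the equation collapses to twice the HJB equation \eqref{eq:HJBequation} for $\phi_0$; the terminal condition $2G=2G$ trivially agrees, so the assumed uniqueness of $C^{1,2}_b$ solutions forces $\xi_0\equiv 2\phi_0$. Feeding this into \eqref{eq:xik} with $k=1$, the cross term $\sigma\sigma^\top\partial_\mu\xi_0\cdot\partial_\mu\xi_1=4\sigma\sigma^\top\partial_\mu\phi_0\cdot\partial_\mu\phi_1$ combines with the ``twisted'' drift $b+\sigma\sigma^\top\partial_\mu\phi_0$ to produce the net transport $b-\sigma\sigma^\top\partial_\mu\phi_0$ appearing in \eqref{eq:phik} with $k=1$, and the remaining terms match twice the $\phi_1$ equation; uniqueness then forces $\xi_1\equiv 2\phi_1$. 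Consequently $\log R(\hat\delta_N)=o(1)$, hence $R(\hat\delta_N)\to 1$ and $\rho(\hat\delta_N)=M^{-1/2}\sqrt{R(\hat\delta_N)-1}\to 0$, which is the vanishing relative error conclusion.

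For the $T(N)$ limit, the $k=2$ expansions give $T(N)=N[R(\hat\delta_N)-1]=a_2(s,\mu^N_y)+o(1)$, so one must evaluate $a_2(s,\mu^N_y)$. Here the stochastic representations \eqref{eq:phikstochrep} and \eqref{eq:xikstochrep} with $k=2$ are decisive. Crucially, since $\xi_0=2\phi_0$, the drift of $Y$ in \eqref{eq:xikstochrep} reduces to $b+\sigma\sigma^\top\partial_\mu\phi_0-\sigma\sigma^\top\partial_\mu\xi_0=b-\sigma\sigma^\top\partial_\mu\phi_0$, matching the drift of $Z$ in \eqref{eq:phikstochrep}; both therefore coincide in law with the controlled McKean--Vlasov diffusion $\hat X^{u,s,\mu}$ for $u=-\sigma^\top\partial_\mu\phi_0$. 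For $k=2$ the cross sums in both representations are empty (the only admissible index pair $(i,j)=(0,2)$ violates the constraint $j<k$), so after substituting $\xi_1=2\phi_1$ the integrand for $2\phi_2-\xi_2$ simplifies to $|\sigma^\top\partial_\mu\phi_1|^2$, yielding
$$a_2(s,\mu)=\E\biggl[\int_s^T\bigl|\sigma^\top(\hat X^{u,s,\mu}_\tau,\mc{L}(\hat X^{u,s,\mu}_\tau))\,\partial_\mu\phi_1(\tau,\mc{L}(\hat X^{u,s,\mu}_\tau))[\hat X^{u,s,\mu}_\tau]\bigr|^2\,d\tau\biggr].$$

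To conclude, one passes from $\mu^N_y$ to $\nu$: Assumption \ref{assumption:convergenceofinitialconditions} gives $\mu^N_y\to\nu$ in $\mc{P}_2(\R^d)$, and since $\phi_2,\xi_2\in C^{1,2}_b$ are continuous in the measure argument, $a_2(s,\mu^N_y)\to a_2(s,\nu)$, producing \eqref{eq:TNlimit}. The main obstacle is ensuring the $o(N^{-1})$ remainder in the expansion remains $o(1)$ after multiplication by $N$; this is built into the hypothesis that the expansions hold pointwise at $(s,\mu^N_y)$ with the stated small-$o$ remainder. A secondary point is the existence, uniqueness, and regularity of $\phi_k,\xi_k$ together with the validity of their Feynman--Kac representations \eqref{eq:phikstochrep}, \eqref{eq:xikstochrep} --- these are assumed here, ultimately justified via Theorem 3.8 of \cite{DF1} under the stated linear growth on $b$.
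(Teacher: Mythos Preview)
Your approach is correct and mirrors the paper's. There is, however, one step you assert without proof on which the paper spends real effort: the identities
\[
\Phi^N(s,\mu^N_y)=-\tfrac1N\log\E\bigl[e^{-NG(\mu^{N,s,y}_T)}\bigr],\qquad \Xi^N(s,\mu^N_y)=-\tfrac1N\log\E\Bigl[e^{-2NG(\hat\mu^{N,s,y}_T)}\textstyle\prod_{i=1}^N(Z^{i,N,s,y})^2\Bigr].
\]
These are not definitions---$\Phi^N$ and $\Xi^N$ are introduced only as solutions of the Wasserstein PDEs \eqref{eq:prelimitHJB} and \eqref{eq:kostas3.3}. The paper closes this gap by (i) rewriting the second moment via Girsanov (Lemma \ref{lemma:RhatnumeratorGirsanov}), (ii) applying Feynman--Kac on $\R^{dN}$ to identify the finite-dimensional equations \eqref{eq:prelimitBKE}, \eqref{eq:prelimitFK} with the desired expectations---this is precisely where the linear-growth bound on $b$ is invoked, via \cite{KS} Theorem 7.6---and (iii) using Proposition \ref{prop:empprojderivatives} to show that the empirical projections of the assumed $C^{1,2}_b$ Wasserstein solutions solve those finite-dimensional PDEs. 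You should at least flag these steps; your closing remark about \cite{DF1} concerns the representations of $\phi_k,\xi_k$, not this issue.

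One minor route difference worth noting: for $\xi_1=2\phi_1$ you substitute $\xi_0=2\phi_0$ into \eqref{eq:xik} with $k=1$, observe it collapses to twice \eqref{eq:phik} with $k=1$, and conclude by PDE uniqueness. The paper instead uses the stochastic representations \eqref{eq:phikstochrep}, \eqref{eq:xikstochrep} at $k=1$ directly, after first checking (via \ref{assumption:weaksenseuniqueness}) that $\xi_0=2\phi_0$ forces $Y^{t,\mu}\overset{d}{=}Z^{t,\mu}$. Both are valid under the stated hypotheses; your PDE argument is arguably cleaner and would in principle allow one to drop the $k=1$ stochastic-representation assumption from the first part of the theorem.
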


\begin{remark}\label{remark:extensions}
The results of Theorems \ref{theo:logefficient} and \ref{theo:expansionanalysis} can easily be extended to the situation where we modify the desired expectation \eqref{eq:desiredexpectation} to 
\begin{align*}
\E\left[\exp\left(-N\left[G(\mu^{N,s,y}_T)+\int_s^Tf(\mu^{N,s,y}_t)dt\right]\right) \right]
\end{align*}
for sufficiently regular $f:\mc{P}_2(\R^d)\tto \R$. In this case one takes $F(\mu)=G(\mu_T)+\int_s^Tf(\mu^{N,s,y}_t)dt$ in \eqref{eq:LDP} and modifies $H_0$ from \eqref{eq:Hamiltonian} to
\begin{align*}
H_0(\mu,p,\Gamma)=- \int_{\R^d}b(x,\mu)\cdot p(x)-\frac{1}{2}p(x)\cdot[\sigma\sigma^\top(x,\mu) p(x)]+\frac{1}{2}\sigma\sigma^\top(x,\mu): \Gamma(x) \mu(dx)-f(\mu).
\end{align*}

One should also be able to extend these methods in order to design importance sampling schemes for the probabilities of rare events. That is, rather than estimating \eqref{eq:desiredexpectation}, one may want to estimate
\begin{align*}
\Prob(\mu^{N,s,y}_T\in A)
\end{align*}
for some $A\subset \mc{P}(\R^d)$. This formally corresponds to taking 
\begin{align*}
G(\mu)=\begin{cases}
+\infty,&\mu\in A^c\\ 
0,&\mu\in A
\end{cases}
\end{align*}
in \eqref{eq:desiredexpectation} and \eqref{eq:HJBequation}. It is well known that this extension can be made in the small-noise setting --- see, e.g., \cite{DSW} Proposition 4.2. We refrain from performing this analysis here, but we plan to extend the methods presented to not only probabilities at finite time, but also exit probabilities and mean first passage times in future work. Thanks to the many parallels to the small-noise setting, we expect that designing importance sampling schemes related to exit events will require the study of HJB equations of the form \eqref{eq:HJBequation} restricted to some subset of the space $\mc{P}_2(\R^d)$ with boundary conditions (compare with, e.g. Equations (2.4)-(2.5) in \cite{DSZ}). Such equations are already beginning to be studied in the context of optimal stopping problems for McKean-Vlasov equations --- see, e.g., \cite{TTZ}.
\end{remark}
\begin{remark}\label{remark:ontheassumptionsforexpansionanalysis}
The additional assumption that $b$ has at most linear growth in the statement of Theorem \ref{theo:expansionanalysis} ensures that the unique solutions of the Feynman-Kac equations \eqref{eq:prelimitBKE} and \eqref{eq:prelimitFK} are given by their appropriate stochastic representations, i.e. the denominator and numerator of $R(\hat{\delta}_N)$ from \eqref{eq:Rdeltahat}, respectively. This can of course hold under weaker conditions. 

Moreover, although we do not necessarily assume \ref{assumption:tightness} for Theorem \ref{theo:expansionanalysis}, it is unlikely that there are situations in which the existence and uniqueness of classical solutions to the HJB equations \eqref{eq:prelimitHJB} and \eqref{eq:kostas3.3} hold for which \ref{assumption:tightness} doesn't hold. In particular, the required assumptions for Theorem \ref{theo:expansionanalysis} should be much stricter than those imposed for Theorems \ref{theo:LDP} and \ref{theo:logefficient}.

The unique stochastic representations assumed for $\phi_k,\xi_k,k=1,2$ solving the PDEs \eqref{eq:phik} and \eqref{eq:xik} are known to hold under fairly weak assumptions --- see Theorem 3.8 in \cite{DF1}. Moreover, these representations for the prefactor terms in the expansion yield parallels to those found in the small-noise setting --- see the discussion towards the end of Subsection \ref{subsec:theo2proof}. The reason for the assumptions in Theorem \ref{theo:expansionanalysis} being stated as such is that little is available in the current literature in terms of sufficient conditions for existence and uniqueness of classical solutions to \eqref{eq:HJBequation}, let alone on obtaining the desired formal expansions in $N$. Indeed, even the convergence of $\Phi^N$ to $\phi_0$ (corresponding to the first-order expansion) has only been studied in the case where the particles have common noise --- see \cite{GPW}. 

The assumptions made in terms of this expansion essentially mimic the conclusion of Theorem 3.3 in \cite{NonAsymptotic}, which is a consequence of Theorem 5.1 in \cite{FJ}. We expect that analogous conditions to those of these theorems should be able to be found to be sufficient for the expansion analysis of Theorem \ref{theo:expansionanalysis} to go through. This is an interesting avenue for future research.

More generally, on p. 1781-1782 of \cite{VW}, it is discussed how if a discontinuity of their optimal control is anything more exotic than a single curve, they are unable to prove log-efficiency. In general the interplay between regularity of solutions to the zero-viscosity HJB equation and the properties of the relative error of the importance sampling scheme is an interesting open problem even in the small-noise setting, though there there is a wealth of numerical evidence suggesting that in many situations importance sampling schemes derived from non-differentiable subsolutions of the zero-viscosity HJB equation may have bounded or even vanishing relative error. 

We contribute to these numerical findings in the context of weakly interacting diffusions in the numerical examples of Subsection \ref{subsection:nonsmooth}, where we construct subsolutions for modifications of the linear-quadratic regime discussed in Section \ref{sec:LQ} such that the corresponding optimal controls \eqref{eq:optimalcontrol} are discontinuous. We find that, depending on the nature of the discontinuity, the relative error of our importance sampling scheme can be expected to grow sublinearly, or even vanish --- see Tables \ref{table:absinside} and \ref{table:absoutside}, respectively.

%\red{On finding conditions for the asymptotic expansion to hold. More generally the interplay between regularity of classical/viscosity solutions to \eqref{eq:HJBequation} and the relative error of the Importance Sampling Scheme from Theorem \ref{theo:logefficient} will be an interesting avenue of future research (see the discussion on p.1781-1782 in \cite{VW})}
\end{remark}

\section{A Class of Examples: The Linear-Quadratic Regime}\label{sec:LQ}
%\red{A remark not in the current notes: If in the linear-quadratic solution, we have the quadratic terms in $\int_{\R^d}z\mu(dz)$ in the solution for $\Psi(t,\mu)$ are $0$ (corresponding to $\Gamma(t)=\Lambda(t),t\in[0,T]$ in the current notes), then $\partial^2_\mu\Phi(t,\mu)[z_1,z_2]\equiv 0$, so $\Psi(t,\mu)$ solving \eqref{eq:HJBequation} implies it also solves \eqref{eq:prelimitHJB} and hence $\eqref{eq:tildePhiN}$, and so the constructed control from \eqref{eq:optimalcontrol} will automatically yield a zero-variance estimator for \eqref{eq:desiredexpectation}}

We now consider a class of HJB equations of the form \eqref{eq:HJBequation} for which explicit solutions are known. Consider the setting where 
\begin{equation}\label{eq:LQsetup}
    \begin{split}
b(x,\mu)&=b_0+Bx+\bar{B}\int_{\R^d}z\mu(dz),\\ 
\sigma(x,\mu)&=\sigma,\\ 
G(\mu)&=\int_{\R^d}z^\top P_2 z+p_1\cdot z\mu(dz)+\biggl[\int_{\R^d}z\mu(dz)\biggr]^\top \bar{P}_2\int_{\R^d}z\mu(dz)+p_2
    \end{split}
\end{equation}
for $b_0,p_1\in\R^d$, $\sigma\in \R^{d\times 1}$ (so $m=1$), $p_2\in\R$, and $P_2,B,\bar{B},\bar{P}_2\in\R^{d\times d}$ such that $P_2,\bar{P}_2$ are symmetric and positive semidefinite. By Section 4 of \cite{PW}, the unique classical solution to \eqref{eq:HJBequation} is given by 
\begin{align*}
\Psi(t,\mu)=\int_{\R^d}z^\top \Lambda(t)z\mu(dz)+\biggl[\int_{\R^d}z\mu(dz)\biggr]^\top [\Gamma(t)-\Lambda(t)]\int_{\R^d}z\mu(dz)+\gamma(t)\cdot \int_{\R^d}z\mu(dz)+\chi(t),
\end{align*}
where:
\begin{equation}\label{eq:RiccatiEqns}
    \begin{split}
\dot\Lambda(t)+\Lambda(t)B+B^\top \Lambda(t)-2\Lambda(t)\sigma\sigma^\top\Lambda^\top(t)&=0,\Lambda(T)=P_2,\\ 
\dot\Gamma(t)+\Gamma(t)[B+\bar{B}]+[B+\bar{B}]^\top\Gamma(t)-2\Gamma(t)\sigma\sigma^\top\Gamma^\top(t)&=0,\Gamma(T)=P_2+\bar{P}_2,\\ 
\dot\gamma(t)+[B+\bar{B}]^\top\gamma(t)-2\Gamma(t)\sigma\sigma^\top\gamma(t)+2\Gamma(t)b_0&=0,\gamma(T)=p_1,\\ 
\dot\chi(t)-\frac{1}{2}\gamma^\top(t)\sigma\sigma^\top\gamma(t)+\gamma(t)\cdot b_0+\sigma^\top\Lambda(t)\sigma&=0,\chi(T)=p_2.
    \end{split}
\end{equation}
The above Riccati equations admit unique solutions $\gamma\in\R^d,\chi\in\R,\Lambda,\Gamma\in\R^{d\times d}$ such that $\Lambda,\Gamma$ are symmetric and positive semidefinite for all $t$. 

Then, using:
\begin{align*}
\partial_\mu \Psi(t,\mu)[z]=2\Lambda(t) z+2[\Gamma(t)-\Lambda(t)]\int_{\R^d}z\mu(dz) +\gamma(t)
\end{align*}
the controls from \eqref{eq:optimalcontrol} in Theorem \ref{theo:logefficient} are given by
\begin{equation}\label{eq:optimalcontrolLQregime}
    \begin{split}
v_i^N(t,x_1,...,x_N)&=-\sigma^\top\partial_\mu \Psi(t,\mu^N_x)[x_i]\\ 
&=-\sigma^\top\biggl[2\Lambda(t)x_i+2 [\Gamma(t)-\Lambda(t)]\int_{\R^d}z\mu^N_x(dz)+\gamma(t)\biggr].
    \end{split}
    \end{equation}

Note that using Remark 5.2 in \cite{BP}, we can extend to the case where $m>1$, though we refrain from doing so for simplicity. We could also allow for time dependence in the coefficients, but for the sake of simplicity we do not make this extension here.

Despite the fact that $G$ from \eqref{eq:LQsetup} does not satisfy the Assumption \ref{assumption:forlogefficiency} \ref{assumption:Gbounded}, as we will see in Remark \ref{remark:zerorelativeerrorLQ}, the controls \eqref{eq:optimalcontrolLQregime} yield not only the log-efficiency proved in Theorem \ref{definition:logefficiency}, but in fact zero relative error for all $N$. 

Our first example from the linear-quadratic regime is chosen to have symmetries such that the relative error of the standard Monte Carlo estimator $\delta_N$ and of the importance sampling estimator $\hat{\delta}_N$ \eqref{eq:deltahat} with the choice of control \eqref{eq:optimalcontrolLQregime} are both easily computable.

\begin{example}\label{example:firstexample}
Let $G(\mu)  = \int_\R x \mu(dx),d=m=1$, and 
\begin{align*}
X^{i,N,s,y}_t=y_i+\int_s^t X^{i,N,s,y}_\tau - \frac{1}{N}\sum_{j=1}^N X^{j,N,s,y}_\tau d\tau + \sigma W^i_t.
\end{align*}
Symmetries in this problem allow us to calculate the target expectation \eqref{eq:desiredexpectation}, as well as the Monte Carlo relative error $\rho(\delta_N)$ and importance sampling relative error $\rho(\hat{\delta}_N)$ \eqref{eq:importancesamplingrelativeerror}, explicitly.

We have 
\begin{align*}
\E\biggl[\exp(-NG(\mu^N_T)) \biggr] & =\E\biggl[\exp\biggl(-\sum_{i=1}^N\biggl\lbrace y_i+\int_s^T X^{i,N,s,y}_t - \frac{1}{N}\sum_{j=1}^N X^{j,N,s,y}_t dt + \sigma W^i_T\biggr\rbrace\biggr)\biggr] \\ 
& = \exp\biggl(-\sum_{i=1}^Ny_i\biggr)\E\biggl[\exp\biggl(-\sigma W^1_T\biggr)\biggr]^N\\ 
& = \exp\biggl(-\sum_{i=1}^Ny_i\biggr)\exp(N\sigma^2 (T-s)/2),
\end{align*}
and similarly:
\begin{align*}
\E\biggl[\exp(-2NG(\mu^N_T)) \biggr] & =\E\biggl[\exp\biggl(-2\sum_{i=1}^N\biggl\lbrace y_i+\int_s^T X^{i,N,s,y}_t - \frac{1}{N}\sum_{j=1}^N X^{j,N,s,y}_t dt + \sigma W^i_T\biggr\rbrace\biggr)\biggr] \\ 
& = \exp\biggl(-2\sum_{i=1}^Ny_i\biggr)\exp(2N\sigma^2 (T-s)),
\end{align*}
where in the last step we have used the formula for the moment generating function of a normal random variable. Thus:
\begin{align*}
\rho(\delta_N)&=  \frac{1}{\sqrt{M}}\sqrt{\frac{\exp(2N\sigma^2 (T-s))}{\exp(N\sigma^2 (T-s))}-1}= \frac{1}{\sqrt{M}}\sqrt{\exp(N\sigma^2 (T-s))-1}.
\end{align*}

Now, to construct the control \eqref{eq:optimalcontrolLQregime} for our importance sampling scheme, we have that in the setup of \eqref{eq:LQsetup}, $b_0=P_2=\bar{P}_2=\bar{p}_1=p_2=0$, $B=p_1=1$, and $\bar{B}=-1$. Thus the solution to the system of ODEs \eqref{eq:RiccatiEqns} is $\Lambda(t)=\Gamma(t)=0$, $\gamma(t)=1$, and $\chi(t) = \frac{1}{2}\sigma^2(t-T)$, and so the solution to the HJB equation \eqref{eq:HJBequation} is
\begin{align*}
\Psi(t,\mu)& = \int_\R x \mu(dx)+\frac{1}{2}\sigma^2(t-T).
\end{align*}
Our control from Theorem \ref{theo:logefficient} is given by
\begin{align*}
\bar{v}(s,x,\mu)\equiv-\sigma.
\end{align*}
Then, to compute the relative error of the importance sampling scheme, we have
\begin{align*}
\rho(\hat{\delta}_N) & = \frac{1}{\sqrt{M}}\sqrt{\frac{\E\biggl[\exp(-2\sum_{i=1}^N \hat{X}^{i,N,s,y}_T)\prod_{i=1}^N \exp\biggl(2\sigma \hat{W}^i_T - \int_s^T \sigma^2dt\biggr) \biggr]}{\E\biggl[\exp(-NG(\mu^N)) \biggr]^2}-1}\\
& = \frac{1}{\sqrt{M}}\sqrt{\frac{\E\biggl[\exp(-2\sum_{i=1}^N (\hat{X}^{i,N,s,y}_T-\sigma \hat{W}^i_T))\biggr]\exp\biggl(- N\sigma^2(T-s)\biggr)}{\E\biggl[\exp(-NG(\mu^N_T)) \biggr]^2}-1}
\end{align*}
where
\begin{align*}
\hat{X}^{i,N,s,y}_t=y_i+\int_s^t \hat{X}^{i,N,s,y}_\tau - \frac{1}{N}\sum_{j=1}^N \hat{X}^{j,N,s,y}_\tau -\sigma^2 d\tau + \sigma \hat{W}^i_t.
\end{align*}
Continuing, we have
\begin{align*}
\rho(\hat{\delta}_N) & = \frac{1}{\sqrt{M}}\sqrt{\frac{\E\biggl[\exp\biggl(-2\sum_{i=1}^N\biggl\lbrace y_i+\int_s^T \hat{X}^{i,N,s,y}_t - \frac{1}{N}\sum_{j=1}^N \hat{X}^{j,N,s,y}_t -\sigma^2 dt\biggr\rbrace\biggr)\biggr]\exp\biggl(- N\sigma^2(T-s)\biggr)}{\E\biggl[\exp(-NG(\mu^N_T)) \biggr]^2}-1}\\ 
& = \frac{1}{\sqrt{M}}\sqrt{\frac{\E\biggl[\exp\biggl(-2\sum_{i=1}^N (y_i-(T-s)\sigma^2) \biggr)\biggr]\exp\biggl(- N\sigma^2(T-s)\biggr)}{\E\biggl[\exp(-NG(\mu^N_T)) \biggr]^2}-1}\\ 
& = \frac{1}{\sqrt{M}}\sqrt{\frac{\exp\biggl(-2\sum_{i=1}^N y_i\biggr)\exp\biggl(2N\sigma^2(T-s)\biggr)\exp\biggl(- N\sigma^2(T-s)\biggr)}{\exp\biggl(-2\sum_{i=1}^Ny_i\biggr)\exp(N\sigma^2 (T-s))}-1}\\ 
&=0.
\end{align*}

This shows that where the Monte Carlo estimator has relative error which increases exponentially as $N\toinf$ or $T\toinf$, our importance sampling estimator actually has zero variance for all $N$ and $T$.
\end{example}
The reader familiar with small-noise importance sampling schemes for SDEs may realize at this point that the controls computed in Example \ref{example:firstexample} can also be derived using the classical small-noise theory. In that vein, we make the following remark relating a special case of our importance sampling method to the classical small-noise importance sampling scheme:
\begin{remark}\label{remark:comparisonwiththesmallnoisecase}
{}In the linear-quadratic regime, $Y^{N,s,y}_t=\frac{1}{N}\sum_{i=1}^N X^{i,N,s,y}$ satisfies the small-noise SDE:
\begin{align*}
dY^{N,s,y}_t&=[b_0+(B+\bar{B})Y^{N,s,y}_t]dt+\frac{\sigma}{\sqrt{N}}d\mc{W}^N_t,\\ 
Y^{N,s,y}_s&=\frac{1}{N}\sum_{i=1}^Ny_i,
\end{align*}
where for each $N$, $\mc{W}^N$ is the standard one-dimensional Brownian motion $\mc{W}^N_t=\frac{1}{\sqrt{N}}\sum_{i=1}^NW^i_t$. Thus, when $G(\mu) = g(\int_{\R^d} x\mu(dx))$ for some $g: \R^d\to \R^d$, estimating functionals of the form 
\begin{align*}
\E[\exp(-NG(\mu^{N,s,y}_T))] = \E[\exp(-Ng(Y^{N,s,y}_T))]
\end{align*}
can also be done using standard small-noise importance sampling for SDEs, i.e., the appropriate control is $\hat{v}(t,y)=-\sigma^\top\partial_y\psi(t,y)$ where $\psi$ satisfies the standard first-order HJB equation:
\begin{equation}\label{eq:linearquadratichjbsmallnoise}
\begin{split}
    -\partial_t\psi(t,y)&-[b_0+(B+\bar{B})y]\cdot \partial_y\psi(t,y)+\frac{1}{2}|\sigma^\top \partial_y\psi(t,y)|^2=0,\qquad y\in\R^d,t\in[0,T),\\ 
\psi(T,y)&=g(y),\qquad y\in \R^d
\end{split}
\end{equation}

 --- see, e.g., equations (1.12) and (2.11) in \cite{VW}.  The resulting importance sampling estimator is given by
\begin{align}\label{eq:smallnoiseimportancesamplinglinearcase}
\hat{\delta}_N=\frac{1}{M}\sum_{j=1}^M \exp(-Ng(\hat{Y}^{N,s,y,j}_T))\exp\left(-\sqrt{N}\int_s^T\hat{v}(t,\hat{Y}^{N,s,y,j}_t)\cdot d\hat{\mc{W}}^N_t-\frac{N}{2}\int_s^T |\hat{v}(t,\hat{Y}^{N,s,y,j}_t)|^2dt\right)
\end{align}
where $\hat{Y}^{N,s,y,j}_t$ denotes the $j$'th realization of $\hat{Y}^{N,s,y}_t$ solving
\begin{align*}
d\hat{Y}^{N,s,y}_t&=[b_0+(B+\bar{B})\hat{Y}^{N,s,y}_t+\sigma\hat{v}(t,\hat{Y}^{N,s,y}_t)]dt+\frac{\sigma}{\sqrt{N}}d\hat{\mc{W}}^N_t,\quad \hat{Y}^{N,s,y}_s=\frac{1}{N}\sum_{i=1}^Ny_i.
\end{align*} 
In our linear-quadratic regime \eqref{eq:LQsetup}, the choice of parameters $P_2=0,p_1=[p,...,p]^\top$ (so that $g(y)=py+y^\top \bar{P}_2y+p_2$) is therefore covered by the small-noise theory. 

To show that the two importance sampling estimators \eqref{eq:smallnoiseimportancesamplinglinearcase} and \eqref{eq:deltahat} are always identical in this case, we make the ansatz $\Psi(t,\nu) = \psi\left(t,\int_{\R^d}z\nu(dz)\right)$. Then $\partial_\mu\Psi(t,\nu)[z]=\partial_y\psi\left(t,\int_{\R^d}x\nu(dx)\right),\partial_z\partial_\mu\Psi(t,\nu)[z]\equiv 0, $ and \eqref{eq:HJBequation} is given by:
\begin{align*}
-\partial_t\psi\left(t,\int_{\R^d}z\nu(dz)\right)&-\biggl(b_0+[B+\bar{B}]\int_{\R^d}z\nu(dz)\biggr)\cdot\partial_y\psi\left(t,\int_{\R^d}z\nu(dz)\right)+\frac{1}{2}\left|\sigma^\top\partial_y\psi\left(t,\int_{\R^d}z\nu(dz)\right)\right|^2=0,\\ 
&\hspace{8cm}t\in[0,T),\nu\in\mc{P}_2(\R^d),\nonumber\\ 
\psi\left(T,\int_{\R^d}z\nu(dz)\right)&=g\left(\int_{\R^d}z\nu(dz)\right),\qquad \nu\in\mc{P}_2(\R^d),\nonumber
\end{align*}
which we see holds for any $\nu$ by substituting $y=\int_{\R^d}z\nu(dz)$ into \eqref{eq:linearquadratichjbsmallnoise}. Then the controls from \eqref{eq:optimalcontrol} in Theorem \ref{theo:logefficient} are given by 
\begin{align*}
v_i^N(t,x_1,...,x_N)=-\sigma^\top\partial_y\psi\left(t,\frac{1}{N}\sum_{i=1}^N x_i\right),\forall i\in \br{1,...,N}
\end{align*}
and the importance sampling estimator \eqref{eq:deltahat} is given by 

\begin{align*}
\hat{\delta}_N&= \frac{1}{M}\sum_{j=1}^M \exp\left(-Ng\left(\frac{1}{N}\sum_{j=1}^N \hat{X}^{j,N,s,y}_t\right)\right)\exp\biggl(\sum_{i=1}^N\biggl\lbrace\int_s^T \sigma^\top\partial_y\psi\left(t,\frac{1}{N}\sum_{j=1}^N \hat{X}^{j,N,s,y}_t\right)\cdot d\hat{W}^i_t \\ 
&- \frac{1}{2}\int_s^T \left|\sigma^\top\partial_y\psi\left(t,\frac{1}{N}\sum_{j=1}^N \hat{X}^{j,N,s,y}_t\right)\right|^2dt\biggr\rbrace\biggr)\\ 
&=\frac{1}{M}\sum_{j=1}^N \exp(-Ng(\hat{Y}^{N,s,y,j}_T))\exp\left(-\sqrt{N}\int_s^T\hat{v}(t,\hat{Y}^{N,s,y,j}_t)\cdot d\hat{\mc{W}}^N_t-\frac{N}{2}\int_s^T |\hat{v}(t,\hat{Y}^{N,s,y,j}_t)|^2dt\right),
\end{align*}
where in the first line above
\begin{align*}
d\hat{X}^{i,N,s,y}_t &= \left[b_0+B\hat{X}^{i,N,s,y}_t+\frac{1}{N}\bar{B}\sum_{j=1}^N\hat{X}^{j,N,s,y}_t -\sigma\sigma^\top\partial_y\psi\left(t,\frac{1}{N}\sum_{j=1}^N\hat{X}^{j,N,s,y}_t \right) \right]dt+\sigma d\hat{W}^i_t,\\ 
\hat{X}^{i,N,s,y}_s &= y_i\nonumber.
\end{align*}

Note, however, that not all examples in the linear-quadratic case \eqref{eq:LQsetup} can be framed as a small-noise SDE problem. For instance, even when $d=1$, $P_2\neq 0$, one needs to consider both $Y^{N,s,y}$ and $\tilde{Y}^{N,s,y}_t=\frac{1}{N}\sum_{i=1}^N (X^{i,N,s,y}_t)^2$. For the latter, the martingale term will be $\frac{2\sigma}{N}\sum_{i=1}^N \int_s^T X^{i,N,s,y}_td W^i_t$, which cannot be written in terms of $\mc{W}^N$. 
\end{remark}

Our next example \ref{example:linearGandIIDparticles} shows that our importance sampling scheme can yield zero relative error even in the situation where the particles from \eqref{eq:IPS} are non-interacting, and hence IID. Again, the fact that the relative error of the importance sampling scheme is zero can be viewed as the consequence of a more general principle discussed in Remark \ref{remark:IIDandlinearG}.

\begin{example}\label{example:linearGandIIDparticles}
Consider now the system:
\begin{align*}
X^{i,N,y}_t=y-\int_0^t X^{i,N,y}_s +  W^i_t
\end{align*}
where $d=m=1$, and we fix $s=0$ and suppress it in the notation for simplicity. We are now dealing with IID diffusions, that is, there is no interaction between the particles. 

Suppose we take $G(\mu)=\int_{\R}x^2\mu(dx).$ Then in the notation of \eqref{eq:LQsetup}, $\bar{P}_2=\bar{p}_1=p_1=\bar{B}=b_0=p_2=0$, $\sigma=P_2=1$, $B=-1$ and $y_i=y$ for all $i\in\bb{N}$. Using the known density of the Ornstein–Uhlenbeck process $X^{1,N,y}_T$, we get: 
\begin{align*}
\E\left[\exp(-NG(\mu^{N,y}_T))\right]& = \E\left[\exp\left(-\sum_{i=1}^N (X^{i,N,y}_T)^2\right)\right]\\ 
&=\E\left[\exp\left(-(X^{1,N,y}_T)^2\right)\right]^N\\ 
&=\exp\left(Ny^2/(1-2e^{2T})\right)(2-e^{-2T})^{-N/2}
\end{align*}
and similarly 
\begin{align*}
\E\left[\exp\left(-2NG(\mu^{N,y}_T)\right)\right]=\exp\left(2Ny^2/(2-3e^{2T})\right)(3-2e^{-2T})^{-N/2}.
\end{align*}
\
Then:
\begin{align*}
\rho(\delta_N)&=  \frac{1}{\sqrt{M}}\sqrt{\exp\left(2Ny^2\left[(2-3e^{2T})^{-1} - (1-2e^{2T})^{-1}\right]\right)\left(\frac{(2-e^{-2T})^2}{3-2e^{-2T}}\right)^{N/2}-1}.\\
%&=  \frac{1}{\sqrt{M}}\sqrt{\exp(2Ny^2e^{-4T}(e^{2T}-1)/(2e^{2T}-1))\exp(2NT)(2e^{2T}-1)^{N/2}-1}.
\end{align*}

Meanwhile, the optimal control can be found from solving the ODEs \eqref{eq:RiccatiEqns} to get $\Lambda(t)=\Gamma(t)=\frac{e^{2t}}{2e^{2T}-e^{2t}},\gamma=0,\chi(t)=\log(\sqrt{2-e^{2t-2T}})$, so the control \eqref{eq:optimalcontrolLQregime} is given by:
\begin{align*}
v^N_i(t,x_1,...,x_N)=\frac{2e^{2t}}{e^{2t}-2e^{2T}}x_i.
\end{align*}

Then the relative error $\rho(\hat{\delta}_N)$ is given by
\begin{align*}
&\frac{1}{\sqrt{M}}\sqrt{\frac{\E\biggl[\exp\left(-2\sum_{i=1}^N (\hat{X}^{i,N,y}_T)^2\right)\prod_{i=1}^N \exp\biggl(-4\int_0^T \frac{e^{2t}}{e^{2t}-2e^{2T}}\hat{X}^{i,N,y}_t d\hat{W}^i_t - 4\int_0^T \frac{e^{4t}}{(e^{2t}-2e^{2T})^2}(\hat{X}^{i,N,y}_t)^2 dt\biggr) \biggr]}{\E\biggl[\exp(-NG(\mu^{N,y}_T)) \biggr]^2}-1}\\
& =\frac{1}{\sqrt{M}}\sqrt{\frac{\E\biggl[\exp\biggl(-2\sum_{i=1}^N (\hat{X}^{i,N,y}_T)^2+2\int_0^T \frac{e^{2t}}{e^{2t}-2e^{2T}}\hat{X}^{i,N,y}_t d\hat{W}^i_t + 2\int_0^T \frac{e^{4t}}{(e^{2t}-2e^{2T})^2}(\hat{X}^{i,N,y}_t)^2 dt\biggr) \biggr]}{\E\biggl[\exp(-NG(\mu^N)) \biggr]^2}-1}
\end{align*}
where
\begin{align*}
\hat{X}^{i,N,y}_t=y+\int_0^t\left[\frac{2e^{2s}}{e^{2s}-2e^{2T}}-1\right] \hat{X}^{i,N,y}_s  ds +  \hat{W}^i_t.
\end{align*}
By It\^o's formula, we have 
\begin{align*}
&(\hat{X}^{i,N,y}_t)^2\frac{e^{2t}}{e^{2t}-2e^{2T}}\\ 
%&=y^2\frac{1}{1-2e^{2T}}+\int_0^t \left(2[\frac{2e^{2s}}{e^{2s}-2e^{2T}}-1]\frac{e^{2s}}{e^{2s}-2e^{2T}}-4\frac{e^{2s}e^{2T}}{(e^{2s}-2e^{2T})^2}\right)(\hat{X}^{i,N,y}_s)^2+\frac{e^{2s}}{e^{2s}-2e^{2T}}ds \\ 
%&+ 2\int_0^{\red{t}} \hat{X}^{i,N,y}_s\frac{e^{2s}}{e^{2s}-2e^{2T}}\red{d}\hat{W}^i_s.\\ 
&=y^2\frac{1}{1-2e^{2T}}+2\int_0^t \frac{e^{4s}}{(e^{2s}-2e^{2T})^2}(\hat{X}^{i,N,y}_s)^2ds + 2\int_0^t \hat{X}^{i,N,y}_s\frac{e^{2s}}{e^{2s}-2e^{2T}}d\hat{W}^i_s+\frac{1}{2}\log\biggl(\frac{2e^{2T}-e^{2t}}{2e^{2T}-1}\biggr).
\end{align*}
Setting $t=T$, we get 
\begin{align*}
(\hat{X}^{i,N,y}_{T})^2+2\int_0^{T} \frac{e^{4s}}{(e^{2s}-2e^{2T})^2}(\hat{X}^{i,N,y}_s)^2ds + 2\int_0^{T} \hat{X}^{i,N,y}_s\frac{e^{2s}}{e^{2s}-2e^{2T}}d\hat{W}^i_s=-y^2\frac{1}{1-2e^{2T}}-\frac{1}{2}\log\biggl(\frac{e^{2T}}{2e^{2T}-1}\biggr),
\end{align*}
so, continuing:
\begin{align*}
\rho(\hat{\delta}_N) & = \frac{1}{\sqrt{M}}\sqrt{\frac{\exp\biggl(\frac{2N}{1-2e^{2T}}y^2\biggr)\frac{e^{2NT}}{(2e^{2T}-1)^{N}}}{\exp\left(2Ny^2/(1-2e^{2T})\right)}(2-e^{-2T})^N-1} = 0.
\end{align*}

This shows that in this situation as well, while the standard Monte Carlo method yields a relative error which grows exponentially in $N$ for fixed $y\in\R$, $T>0$, and grows exponentially in $|y|$ for fixed $N\in\bb{N}$, $T>0$, our importance sampling scheme has zero error.

In this trivial case, the controlled particles are IID, so for both the importance sampling scheme and standard Monte Carlo we only need to simulate one particle (making the analysis in $N$ a bit useless in practice, but the analysis in $|y|$  still holds true when $N=1$).
\end{example}

\begin{remark}\label{remark:IIDandlinearG}
This last example can be viewed through the following lens. Suppose that $b(x,\mu)=b(x),\sigma(x,\mu)=\sigma(x)$ (so that the particles are IID when given the same initial condition) and that $G(\mu)=\langle g,\mu\rangle$ for some $g:\R^d\tto \R$. Making the ansatz $\Psi(t,\mu)=\langle \psi(t,\cdot),\mu\rangle$ in \eqref{eq:HJBequation}, we get:
\begin{align*}
\langle -\dot\psi(t,\cdot)-b(\cdot)\cdot\partial_x\psi(t,\cdot)+\frac{1}{2}|\sigma^\top(\cdot)\partial_x\psi(t,\cdot)|^2-\frac{1}{2}\sigma\sigma^\top(\cdot):\partial^2_x\psi(t,\cdot),\mu\rangle&=0,\qquad t\in[0,T),\mu\in\mc{P}_2(\R^d),\\ 
\langle \psi(T,\cdot),\mu\rangle &= \langle g(\cdot),\mu\rangle,\qquad \mu \in \mc{P}_2(\R^d),
\end{align*}
which of course is satisfied if we have a unique solution $\psi$ to 
\begin{align}\label{eq:zerovariancesmallnoiseHJB}
 -\dot\psi(t,x)-b(x)\cdot\partial_x\psi(t,x)+\frac{1}{2}|\sigma^\top(x)\partial_x\psi(t,x)|^2-\frac{1}{2}\sigma\sigma^\top(x):\partial^2_x\psi(t,x)&=0,\qquad t\in[0,T),x\in\R^d,\\ 
\psi(T,x)&= g(x),\qquad x\in\R^d.\nonumber
\end{align}
For a related discussion, see Remark 3.3 in \cite{PW}. Equation \eqref{eq:zerovariancesmallnoiseHJB} is the HJB equation corresponding to the zero-variance estimator resulting from the Doob $h$-transform in the single particle setting --- see Equation (2.7) in \cite{VW}. It can thus be seen, as in, e.g., \cite{VW} pp. 1778-1779, that letting $v(t,x)=-\sigma^\top(x) \partial_x\psi(t,x)$,
\begin{align*}
\E\left[\exp\left(-2g(\hat{X}^{s,y}_T)\right)\exp\left(-2\int_s^T v(t,\hat{X}^{s,y}_t)\cdot d\hat{W}_t-\int_0^T|v(t,\hat{X}^{s,y}_t)|^2dt\right)\right]=\E[\exp(-g(X^{s,y}_T))]^{2}
\end{align*}
where 
\begin{align*}
dX^{s,y}_t=b(X^{s,y}_t)dt+\sigma(X^{s,y}_t) dW_t,X^{s,y}_s=y
\end{align*}
and 
\begin{align*}
d\hat{X}^{s,y}_t=[b(\hat{X}^{s,y}_t)+\sigma v(t,\hat{X}^{s,y}_t)]dt+\sigma(\hat{X}^{s,y}_t) d\hat{W}_t,\hat{X}^{s,y}_s=y.
\end{align*}
Here $y\in\R^d$ and $W,\hat{W}$ are standard $m$-dimensional Brownian motions initialized at $W_s=\hat{W}_s=0$.
Then we have, for $\mu^{N,s,y}_T,\hat{\mu}^{N,s,y}_T$ the empirical measure on IID copies of solutions $X^{s,y}_T$,$\hat{X}^{s,y}_T$ respectively (which we denote by $X^{i,s,y}_T$,$\hat{X}^{i,s,y}_T,i\in\bb{N}$ and by $W^i,\hat{W}^i$ their driving Brownian motions) and using the controls from \eqref{eq:optimalcontrol} in Theorem \ref{theo:logefficient} are given by $v^N_i(t,x_1,x_2,...,x_N)=-\sigma^\top(x_i)\partial_\mu \Psi(t,\mu^N_x)[x_i]=-\sigma^\top(x_i)\partial_x \psi(t,x_i)=v(t,x_i)$, the importance sampling relative error from \eqref{eq:Rdeltahat} satisfies:
\begin{align*}
 &\frac{\E\biggl[\exp\left(-2NG(\hat{\mu}^{N,s,y}_T)\right)\prod_{i=1}^N (Z^{i,N,s,y})^2 \biggr]}{\E\biggl[\exp\left(-NG(\mu^{N,s,y}_T)\right) \biggr]^2}-1\\ 
 &=\frac{\E\left[\exp\left(\sum_{i=1}^N-2g(\hat{X}^{i,s,y}_T)-2\int_s^Tv(t,\hat{X}^{i,s,y}_t)d\hat{W}^i_t-\int_s^T|v(t,\hat{X}^{i,s,y}_t)|^2dt \right)\right]}{\E\biggl[\exp(-\sum_{i=1}^Ng(X^{i,s,y}_T)) \biggr]^2}-1\\ 
 & = \left(\E\left[\exp\left(-2g(\hat{X}^{s,y}_T))\exp(-2\int_s^T v(t,\hat{X}^{s,y}_t)\cdot d\hat{W}_t-\int_s^T|v(t,\hat{X}^{s,y}_t)|^2dt\right)\right]\cdot\E[\exp\left(-g(X^{s,y}_T)\right)]^{-2}\right)^N-1\\ 
 &=1-1\\ 
 &=0.
\end{align*}
Thus our importance sampling scheme yields an estimator with zero relative error in the setting of non-interacting diffusions and linear $G$.

This also shows that in the non-interacting regime, even when $G$ is linear, solving \eqref{eq:HJBequation} is as difficult as solving for the exact (non-zero viscosity) solution to the HJB equation associated with the importance sampling scheme for one particle \eqref{eq:zerovariancesmallnoiseHJB}. We take this remark as motivation to construct importance sampling schemes for the empirical measure of weakly interacting diffusions based on the joint small-noise and large $N$ limit large deviation principles derived in \cite{Orrieri} and \cite{BCcurrents}. We expect that using such a scheme, in the non-interacting and linear $G$ regime, the small noise importance sampling scheme of, e.g., \cite{VW} will be recovered, and solving the resulting first-order HJB equation on Wasserstein space will be equivalent to solving the standard zero-viscosity HJB equation found as Equation (2.10) therein. %Z: I added the equation reference \eqref{eq:zerovariancesmallnoiseHJB} and deleted the text about the second derivative. The fact that it vanishes here is specific to the form $G(\mu)=\langle g,\mu\rangle.$. Consider the LQ regime where $\bar{B}=b_0=P_2=p_1=p_2=0$, $B=T=1,\sigma=1/\sqrt{2}$. Then $\Lambda=0$ and $\Gamma(t)=\frac{2e^2}{e^2+e^{2t}}$. So $\partial^2_\mu\Psi(t,\mu)=2[\Gamma(t)-\Lambda(t)]\neq 0$, even though the particles are non-interacting.
\end{remark}

\begin{remark}
Note, despite the framework of our linear-quadratic example and the discussion in Remarks \ref{remark:comparisonwiththesmallnoisecase} and \ref{remark:IIDandlinearG}, that in Theorem \ref{theo:logefficient}, we have the freedom to choose $G(\mu)$ to be arbitrarily non-linear so long as it is sufficiently smooth to have a classical subsolution in the sense of Definition \eqref{defi:subsolution}. That is, it need not take the form $G(\mu)=\langle g,\mu\rangle$ or even $G(\mu)=g_1(\langle g_2,\mu\rangle)$. Also, as evidenced in the examples of Subsection \ref{subsection:nonsmooth}, we expect that even if $G$ is not smooth, so that only a (weak) subsolution to \eqref{eq:HJBequation} can be recovered, our importance sampling scheme can be expected to still yield sub-exponentially growing (log-efficient --see Definition \ref{definition:logefficiency}), or even vanishing relative error (Definition \ref{def:vanishingrelativeerror}) --- see Tables \ref{table:absinside} and \ref{table:absoutside}, respectively.

Moreover, our importance sampling scheme treats the empirical measure of IID diffusions and of weakly interacting particles uniformly. The former case can in some sense be seen as a ``Sanov's Theorem'' type generalization of the ``Cram\'er's Theorem'' type results found in \cite{DW1}, since using the ideas outlined in that paper would essentially correspond to taking independent particles and $G(\mu)=g(\int_{\R^d}z\mu(dz))$.

%\red{We also provide evidence that even when $G$ is not smooth, using a (weak) viscosity solution to construct the analogous control to that of Theorem \eqref{theo:logefficient} still yields bounded relative error - see Subsection... It is known in the small noise that bounded relative error is expected if the subsolution is the minimum of two classical sense solutions with a single discontinuous interface - see \cite{VW}.}
\end{remark}
%Lastly, we consider another example in the IID setting where the function $G$ is not taken to be of the linear form supposed in Remark \ref{remark:IIDandlinearG}, and yet the relative error of the importance sampling scheme is still seen to be $0$.

We end this section by observing that the above examples are not anomalous within the linear-quadratic setting. Namely, the exact desired expectation \eqref{eq:desiredexpectation}, and hence the relative error of the standard Monte Carlo estimator, can always easily be found by solving a system of ODEs. Moreover, from this we can make the observation that the relative error of the importance sampling estimator with control \eqref{eq:optimalcontrolLQregime} will in fact always have zero relative error.

Note that in the linear-quadratic regime, the right hand side of prelimit representation from \eqref{eq:prelimitrep} in Proposition \ref{prop:prelimitLDPexpression} with $\tilde{b}=b$ and $F(\mu)=G(\mu_T)$ takes the form of a standard linear-quadratic stochastic control problem. We see then that 
\begin{align*}
-\frac{1}{N}\log \E[\exp\left(-NG(\mu^{N,s,y}_T)\right)]=\bar{y}^\top \Lambda_N(s)\bar{y}+\gamma_N(s)\cdot \bar{y} +\chi_N(s),
\end{align*}
so the desired expectation from \eqref{eq:desiredexpectation} is given by 
\begin{align*}
\E\left[\exp\left(-NG(\mu^{N,s,y}_T)\right)\right]=\exp\left(-N[\bar{y}^\top \Lambda_N(s)\bar{y}+\gamma_N(s)\cdot \bar{y} +\chi_N(s)]\right).
\end{align*}
In the above we denote by  $(y_1,...,y_N)=\bar{y}\in \R^{dN}$, and $\Lambda_N\in\R^{dN\times dN},\gamma_N\in \R^{dN},\chi_N\in\R$ satisfy the Riccati equations: 
\begin{align*}
\dot\Lambda_N(t)+\Lambda_N(t)B_N+B_N^\top \Lambda_N(t)-2N\Lambda_N(t)\sigma_N\sigma_N^\top\Lambda_N^\top(t)&=0,\Lambda_N(T)=P_{N2},\\ 
\dot\gamma_N(t)+B_N^\top\gamma(t)-2N\Lambda_N(t)\sigma_N\sigma^\top_N\gamma_N(t)+2\Lambda_N(t)b_{0N}&=0,\gamma_N(T)=p_{N1},\\ 
\dot\chi_N(t)-\frac{N}{2}\gamma_N^\top(t)\sigma_N\sigma_N^\top\gamma_N(t)+\gamma_N(t)\cdot b_{0N}+\sum_{i=1}^N\bar{\sigma}_{i,N}^\top\Lambda_N(t)\bar{\sigma}_{i,N}&=0,\chi_N(T)=p_2.
\end{align*}
Here 
\begin{align*}
b_{0N}&=[b^\top_0,...,b^\top_0]^\top\in \R^{dN},\\ 
\sigma_N&=\text{diag}(\sigma)\in\R^{dN\times N},\\ 
p_{1N}&=\frac{1}{N}[p_1^\top,...,p_1^\top]^\top\in \R^{dN},\\ 
B_N&=\text{diag}(B)+\frac{1}{N}\bar{B}_N\in\R^{dN\times dN},\\ 
P_{2N}&=\frac{1}{N}\text{diag}(P_2)+\frac{1}{N^2}\bar{P}_{2N}\in \R^{dN\times dN}
\end{align*}
and $\bar{B}_N$ is the block matrix with every $d\times d$ entry given by $\bar{B}$, $\bar{P}_{2N}$ is the block matrix with every $d\times d$ entry given by $\bar{P}_2$, $\bar{\sigma}_{i,N}\in\R^{dN}$ has i'th entry $\sigma$ and the rest $0$.% Note that if $P_{2N}$ is symmetric and if in addition to $P_2,P_2+\bar{P}_2\geq 0$ we know $\bar{P}_2\geq 0$, we know $P_{2N}\geq 0$ for all $N$ \red{(may not need the additional assumption...)}. 

It may seem initially that we need to solve $dN\times dN$ Riccati equations in order to get this value, but in fact we can find that, letting $\Lambda_N^{i,j}\in\R^{d\times d}$ denote the $d\times d$ block matrix in the $i,j$ position, that $\Lambda_N^{i,j}=\bar{\Lambda}_N$ for all $i\neq j$ and $\Lambda_N^{i,i}=\hat{\Lambda}_N$ for all $i\in \br{1,...,N}$, and hence $\gamma_N^i=\bar{\gamma}_N\in\R^d$ for all $i$. Thus we need only solve:
\begin{align*}
\dot{\bar{\Lambda}}_N&+\bar{\Lambda}_N[\bar{B}+B]+\frac{1}{N}[\hat{\Lambda}_N-\bar{\Lambda}_N]\bar{B}+[B+\bar{B}]^\top \bar{\Lambda}_N+\frac{1}{N}\bar{B}^\top[\hat{\Lambda}_N-\bar{\Lambda}_N]-2N[(N-2)\bar{\Lambda}_N\sigma\sigma^\top\bar{\Lambda}_N^\top,\\ 
&\hspace{8cm}+\hat{\Lambda}_N\sigma\sigma^\top\bar{\Lambda}_N^\top+\bar{\Lambda}_N\sigma\sigma^\top\hat{\Lambda}_N^\top]=0,\\ 
\bar{\Lambda}_N(T)&=\frac{1}{N^2}\bar{P}_2,\\ 
\dot{\hat{\Lambda}}_N&+(1-\frac{1}{N})\bar{\Lambda}_N\bar{B}+(1-\frac{1}{N})\bar{B}^\top\bar{\Lambda}_N+\hat{\Lambda}_N[\bar{B}/N+B]+[\bar{B}/N+B]^\top\hat{\Lambda}_N-2N[(N-1)\bar{\Lambda}_N\sigma\sigma^\top\bar{\Lambda}_N^\top,\\ 
&\hspace{8cm}+\hat{\Lambda}_N\sigma\sigma^\top\hat{\Lambda}_N^\top]=0,\\ 
\hat{\Lambda}_N(T)&=\frac{1}{N^2}\bar{P}_2+\frac{1}{N}P_2,\\ 
\dot{\bar{\gamma}}_N&+[\bar{B}+B]^\top\bar{\gamma}_N-2N[(N-1)\bar{\Lambda}_N+\hat{\Lambda}_N]\sigma\sigma^\top\bar{\gamma}_N+2[(N-1)\bar{\Lambda}_N+\hat{\Lambda}_N]b_0=0,\\ 
\bar{\gamma}_N(T)&=p_1/N,\\
\dot\chi_N(t)&-\frac{N^2}{2}\bar{\gamma}_N^\top \sigma\sigma^\top \bar{\gamma}_N+N\bar{\gamma}_N\cdot b_0+N\sigma^\top\hat{\Lambda}_N\sigma=0,\\ 
\chi_N(T)&=p_2.
\end{align*}
Then we have 
\begin{align*}
\E\left[\exp\left(-NG(\mu^{N,s,y}_T)\right)\right]&=\exp\left(-N\left[\bar{y}^\top \Lambda_N(s)\bar{y}+\gamma_N(s)\cdot \bar{y} +\chi_N(s)\right]\right)\\ 
&=\exp\left(-N\left[\sum_{i\neq j}^Ny_i^\top\bar{\Lambda}_N(s)y_j+\sum_{i=1}^Ny^\top_i\hat{\Lambda}_N(s)y_i+\bar{\gamma}^\top(s)\sum_{i=1}^Ny_i +\chi_N(s)\right]\right)\\ 
&=\exp\Biggl(-N\Biggl[\left(\sum_{i=1}^Ny_i\right)^\top\bar{\Lambda}_N(s)\left(\sum_{i=1}^Ny_i\right)+\sum_{i=1}^Ny^\top_i\left[\hat{\Lambda}_N(s)-\bar{\Lambda}_N(s)\right]y_i\\ 
&\qquad+\bar{\gamma}^\top(s)\sum_{i=1}^Ny_i +\chi_N(s)\Biggr]\Biggr)
\end{align*}

Note that this is useful not only for establishing the true desired expectation \eqref{eq:desiredexpectation}, but also allows for us to calculate the relative error for the standard Monte Carlo estimator $\rho(\delta_N)$ by substituting 2$G$ for $G$ in \eqref{eq:LQsetup} with $P_2,p_1,$ and $\bar{P}_2$ modified to $2P_2,2p_1,$ and $2\bar{P}_2$, respectively. We will use this fact in the numerical example in Subsection \ref{subsection:LQNumerics}. 

Also note that we can see directly how to write the right-hand side of the previous display as a function of $\mu^N_{y}$. Letting
\begin{equation}\label{eq:prelimitHJBsolLQ}
    \begin{split}
\Phi^N(t,\nu)&=N^2\left[\int_{\R^d}z\mu(dz)\right]^\top\bar{\Lambda}_N(t)\left[\int_{\R^d}z\mu(dz)\right]+N\int_{\R^d}z^\top[\hat{\Lambda}_N(t)-\bar{\Lambda}_N(t)]z\mu(dz)\\ &\hspace{8cm}+N\bar{\gamma}_N^\top \int_{\R^d}z\mu(dz)+\tilde{\chi}_N(t)\\ 
&=\int_{\R^d}z^\top\tilde{\Lambda}_N(t)z\mu(dz)+\left(\int_{\R^d}z\mu(dz)\right)^\top\left[\tilde{\Gamma}_N(t)-\tilde{\Lambda}_N(t)\right]\left(\int_{\R^d}z\mu(dz)\right)+\tilde{\gamma}_N^\top \int_{\R^d}z\mu(dz)+\tilde{\chi}_N(t),
    \end{split}
\end{equation}
where $\tilde{\Lambda}_N(t)=N\left[\hat{\Lambda}_N(t)-\bar{\Lambda}_N(t)\right],\tilde{\Gamma}_N=N(N-1)\bar{\Lambda}_N(t)+N\hat{\Lambda}(t),\tilde{\gamma}_N(t)=N\gamma(t)$, we have 
\begin{align*}
\exp\left(-N\Phi^N(s,\mu^N_x)\right)=\E\left[\exp\left(-NG(\mu^{N,s,x}_T)\right)\right],
\end{align*}
and get the Riccati equations:
\begin{align}\label{eq:prelimitRiccatiEquations}
\dot{\tilde{\Lambda}}_N&+\tilde{\Lambda}_NB+B^\top\tilde{\Lambda}_N-2\tilde{\Lambda}_N\sigma\sigma^\top\tilde{\Lambda}_N^\top=0,\quad\tilde{\Lambda}_N(T)=P_2,\\ 
\dot{\tilde{\Gamma}}_N&+\tilde{\Gamma}_N[B+\bar{B}]+[B+\bar{B}]^\top\tilde{\Gamma}_N-2\tilde{\Gamma}_N\sigma\sigma^\top\tilde{\Gamma}_N^\top=0,\quad\tilde{\Gamma}_N(T)=\bar{P}_2+P_2,\nonumber \\ 
\dot{\tilde{\gamma}}_N(t)&+[B+\bar{B}]^\top\tilde{\gamma}_N(t)-2\tilde{\Gamma}_N(t)\sigma\sigma^\top\tilde{\gamma}_N(t)+2\tilde{\Gamma}_N(t)b_0=0,\quad\tilde{\gamma}_N(T)=p_1,\nonumber\\ 
\dot{\tilde{\chi}}_N(t)&-\frac{1}{2}\tilde{\gamma}_N^\top \sigma\sigma^\top \tilde{\gamma}_N+\tilde{\gamma}_N\cdot b_0+\sigma^\top\tilde{\Lambda}_N\sigma+\frac{1}{N}\sigma^\top[\tilde{\Gamma}_N-\tilde{\Lambda}_N]\sigma=0,\quad \tilde{\chi}_N(T)=p_2.\nonumber 
\end{align}
Note firstly that it is easily verified that $\Phi^N(t,\nu)$ from \eqref{eq:prelimitHJBsolLQ} satisfies the prelimit HJB equation \eqref{eq:prelimitHJB} for all $\nu$. Also observe that, other than the perturbation of the ODE for $\tilde{\chi}_N$ by the addition of the term $\frac{1}{N}\sigma^\top[\tilde{\Gamma}_N-\tilde{\Lambda}_N]\sigma$, these are the same Riccati equations as \eqref{eq:RiccatiEqns} for the ``zero viscosity'' HJB equation \eqref{eq:HJBequation}. The zero variance control from \eqref{eq:zerovariancecontrolprelim} is thus:
\begin{align*}
v^N_i(t,x_1,...,x_N)&=-\sigma^\top\biggl[2\tilde\Lambda(t)x_i+2 [\tilde\Gamma(t)-\tilde\Lambda(t)]\int_{\R^d}z\mu^N_x(dz)+\tilde\gamma(t)\biggr]\\ 
&=-\sigma^\top\biggl[2\Lambda(t)x_i+2 [\Gamma(t)-\Lambda(t)]\int_{\R^d}z\mu^N_x(dz)+\gamma(t)\biggr],
\end{align*}
that is, it is the same as the control from Theorem \ref{theo:logefficient} in the linear-quadratic regime (see  \eqref{eq:optimalcontrolLQregime}).

\begin{remark}\label{remark:zerorelativeerrorLQ}
The above discussion implies that in the linear-quadratic regime, the relative error of our importance sampling scheme is always zero. 

This can also be derived via the following observation: In \eqref{eq:prelimitHJB}, when we use the ansatz that the solution takes the form of \eqref{eq:prelimitHJBsolLQ}, since $\partial^2_\mu\Phi^N(\mu)[z_1,z_2]=2[\tilde{\Gamma}_N(t)-\tilde{\Lambda}_N(t)]$, the only term appearing from the second-order term $\int_{\R^d}\frac{1}{2N}\sigma\sigma^\top :\partial^2_\mu\Phi^N(t,\mu)[z,z]\mu(dz)$ is $\frac{1}{N}\sigma\sigma^\top:[\tilde{\Gamma}_N(t)-\tilde{\Lambda}_N(t)]=\frac{1}{N}\sigma^\top[\tilde{\Gamma}_N(t)-\tilde{\Lambda}_N(t)]\sigma$. Thus, when deriving the form of the Riccati equations \eqref{eq:prelimitRiccatiEquations}, it will only show up for $\tilde{\chi}_N$, which collects the constant in $\mu$ terms.

More generally, if we have a solution to \eqref{eq:HJBequation} such that $\partial_\mu\Psi=\partial_\mu \Phi^N$ for all $N$, where $\Phi^N$ solves \eqref{eq:prelimitHJB}, then the importance sampling scheme has zero relative error for all $N$, as per the discussion in Section \ref{sec:ontheHJBEquation}. Supposing, as in the linear-quadratic case, that we have a solution $\Psi$ to \eqref{eq:HJBequation} such that $\partial^2_\mu\Psi(t,\mu)[z_1,z_2]=C(t)\in\R^{d\times d}$ and that $\sigma$ does not depend on $x,\mu$, then $\Psi(T,\nu)=G(\nu)$ and:
\begin{align*}
-\partial_t\Psi(t,\nu)&-\int_{\R^d}b(z,\nu)\cdot \partial_\mu \Psi(t,\nu)[z]-\frac{1}{2}|\sigma^\top\partial_\mu\Psi(t,\nu)[z]|^2\\ 
&+\frac{1}{2}\sigma\sigma^\top(z,\nu):\partial_z\partial_\mu\Psi(t,\nu)[z]+\frac{1}{2N}\sigma\sigma^\top:\partial^2_\mu\Psi(t,\nu)[z,z]\nu(dz)=-\frac{1}{2N}\sigma\sigma^\top:C(t),
\end{align*} 
so letting $\lambda\in C([0,T];\R)$ solve $\dot{\lambda}(t)=-\frac{1}{2N}\sigma\sigma^\top:C(t),\lambda(T)=0$, $\Phi^N(t,\mu)=\Psi(t,\mu)+\lambda(t)$ is a solution to \eqref{eq:prelimitHJB}, and $\partial_\mu\Phi^N=\partial_\mu \Psi$ for all $N$.

It thus clear that letting $m > 1$ or having the coefficients $b_0,B,\bar{B},\sigma$ in \eqref{eq:LQsetup} depend on time does not change the fact that our importance sampling scheme yields a zero-variance estimator in the linear-quadratic regime. Also note that this discussion also applies in the linear-quadratic setting for small noise importance sampling schemes --- compare Equations (2.7) and (2.10) in \cite{VW} and recall that if $b$ is linear in $x$, $\sigma$ is constant, and $G$ is quadratic in $x$ therein that solutions to equation (2.10) will have second derivatives which are constant in $x$.

This reflects a difficulty present in both our setting and the small noise setting for importance sampling schemes based on large deviation principles and zero-viscosity HJB equations --- the hurdle of actually computing the derivative of a solution to the HJB equation and hence the control to be used, numerically or otherwise, persists, and in the main class of problems for which a solution can be computed analytically (the linear-quadratic regime), the solution to the zero-viscosity HJB equation is no easier to solve for than the desired expectation. This means that efficient methods for numerically computing $\partial_\mu\Psi$ appearing in the definition of the control \eqref{eq:optimalcontrol} in Theorem \ref{theo:logefficient}, or otherwise numerically obtaining the control, are highly desirable. Designing and implementing such schemes is an interesting avenue for future research.

%Essentially what has happened is the correlation between the particles being linear and the target function $G$ being quadratic has made it so that the information from the second derivative in $\mu$ of $\Phi$ only contributes trivially (in the form of a constant in time) to the evolution of the random measure, so removing the correlations captured by it in the prelimit backward Kolmogorov/HJB equations doesn't actually matter when constructing an optimal control. We can also view this as saying that the optimal controls from the right hand side of \eqref{prop:prelimitLDPexpression} are of Markovian feedback form and only depend on $N$ in the sense that they are of the form $u^N_i(t)=h^N_i(t,\tilde{X}^{1,N,s,y}_t,...,\tilde{X}^{N,N,s,y}_t)$ where $h^N_i$ is a deterministic function such that $h^N_i(t,x_1,...,x_N)=\tilde{h}(t,\mu^N_x,x_i)$ for another deterministic function $\tilde{h}$. \red{Also note that this is no different than the situation in the linear-quadratic regime for the small noise setting see...}. 
\end{remark}

\section{Numerical Results}\label{sec:numerics}
Here we present numerical results based on examples in the linear-quadratic regime of \eqref{sec:LQ} and perturbations thereof. In all three examples, the importance sampling scheme greatly outperforms standard Monte Carlo methods. In the standard linear-quadratic regime, we observe that the importance sampling relative error is near $0$ for all $N$, as expected, in Table \ref{table:LQ}. When a ``global'' lack of differentiability is introduced to $G$, we observe in Table \ref{table:absoutside} that the importance sampling relative error appears to be vanishing. When a ``local'' lack of differentiability  is introduced to $G$, we observe in Table \ref{table:absinside} that the importance sampling relative error does not seem to vanish, but grows sublinearly, still a massive improvement over the standard Monte Carlo relative error which increases exponentially with $N$ in all three examples.

We simulate all the SDE systems using the Euler-Maruyama method with time step $\Delta t=.01/N$, which we chose empirically by iteratively refining until the estimates $\delta_N$ and $\hat{\delta}_N$ stabilized (i.e., consecutive estimates were nearly equal). We use $M\approx 10^7$ samples for all our simulations. In each case we also use the same final time $T=1$ and diffusion constant $\sigma=.5$.

We note that our tabled results stop at relatively modest values of $N$; although the importance sampling relative error remained small for much larger values of $N$ than those presented in the tables, the exponential growth in the standard Monte Carlo relative errors required too many simulations to resolve for large $N$ for us to make accurate comparisons.

A Python code implementing the examples in this section is provided on the public Gitlab repository \href{https://gitlab.com/mheldman/IS-interacting-particles}{https://gitlab.com/mheldman/IS-interacting-particles}. 

\subsection{The Linear-Quadratic Regime}\label{subsection:LQNumerics}

For our first example, we consider \begin{align}\label{eq:LQnumericsG}
G(\mu) = \int_{\mathbb{R}} z^2 \mu(dz).\end{align}
This corresponds to the linear-quadratic regime in Section \ref{sec:LQ} with $d=m=1,s=0,B=-1,\bar{B}=2,P_2=1,b_0=\bar{P}_2=p_1=p_2=0$ and $y_i=y$ for all $i\in \bb{N}$.

We seek to estimate
\begin{align*}
\E\left[\exp\left(-NG(\mu^{N,y}_T)\right)\right]=\E\biggl[\exp\biggl(-\sum_{i=1}^N |X^{i,N,y}_T|^2\biggr)\biggr]
\end{align*}
where 
\begin{align}\label{eq:IPSforNumerics}
dX^{i,N,y}_t=\left[-X^{i,N,y}_t+\frac{2}{N}\sum_{j=1}^NX^{j,N,y}_t\right]dt+\sigma dW^i_t,X^{i,N,y}_0=y.
\end{align}

Note that, as discussed in Remark \ref{remark:comparisonwiththesmallnoisecase}, the importance sampling scheme for this case cannot be derived from previous work on small-noise SDEs since $P_2\neq 0$.

The typical dynamics from \eqref{eq:McKeanVlasovEquation} will have $\E[X^{\delta_y}_t]=ye^t$ for all $t\in [0,T]$ (we suppress the initial condition $s$ in the notation here) and thus will be given by the time-inhomogeneous Ornstein–Uhlenbeck-like process with drift coefficient $b(t,x)=-x+2ye^t$ and diffusion coefficient $\sigma$. The result of evaluating $G$ from \eqref{eq:LQnumericsG} at $\mu^{N,y}$:
\begin{align}\label{eq:sumforfirstnumericalexample}
G(\mu^{N,y}_T)=\frac{1}{N}\sum_{i=1}^N |X^{i,N,y}_T|^2
\end{align}
is thus expected to be close to the second moment of such a process at time $T$ as $N$ becomes large, which is 
\begin{align}\label{eq:McKeanVlasovSecondMoment}
\E\left[|X^{\delta_y}_T|^2\right]=y^2e^{2T}+\frac{\sigma^2}{2}(1-\exp(-2T)). 
\end{align}
The atypical event we are seeking to sample, represented by $G$ being close to zero, is that the particles interact in such a way that they are closer to the origin than expected via \eqref{eq:McKeanVlasovSecondMoment}.

%We expect that the standard Monte Carlo scheme will over sample the situation where the particles are far from the origin, due to the force of $\bar{B}$ driving them in the direction in the empirical mean. Indeed, if at initialization many of the particles move in the same direction, more particles will be forced in that direction and eventually away from the origin, meaning at time $T$ the sum \eqref{eq:sumforfirstnumericalexample} will be large, and hence the resulting target expectation \eqref{eq:desiredexpectation} will be small. The resulting estimate from the Monte Carlo scheme will depend highly on how the particles move in the initial phase of the simulation (whether or not the empirical mean is near $0$ or not), and thus the relative error will be relatively large.

The role of the importance sampling scheme from Theorem \ref{theo:logefficient} will thus be to modify the interaction and dynamics of each particle to predictably remain closer to the origin. Indeed, as discussed in Remark \ref{remark:zerorelativeerrorLQ}, the theoretical relative error of the importance sampling scheme is zero, so any contributing relative error will be due to numerical discretization. Indeed, we observed in numerical experiments that taking $\Delta t\downarrow 0$ also decreases the relative error of the importance scheme to $0$. %WE SHOULD HAVE HERE A REFERENCE TO A THEOREM STATING WHAT THIS ERROR IS. We expect to have to take the time step $\Delta t=10^{-3}/N$ or something like this} 

Solving the Riccati equations \eqref{eq:RiccatiEqns}, we get 
\begin{align*}
\Lambda(t)&=\frac{e^{2t}}{e^{2T}(1+\sigma^2)-e^{2t}\sigma^2},\\ 
\Gamma(t)&=\frac{e^{2T}}{e^{2T}\sigma^2-e^{2t}(\sigma^2-1)},\\ 
\gamma(t)&=0,\\ 
\chi(t)&=\frac{1}{2}\log\biggl(\frac{e^{2T}}{e^{2T}(1+\sigma^2)-e^{2t}\sigma^2}\biggr)
\end{align*}
Thus the optimal control $v$ from \eqref{eq:optimalcontrolLQregime} and Theorem \ref{theo:logefficient} is given by:
\begin{align*}
v^N_i(t,x_1,...,x_N)=-2\sigma\biggl[\frac{e^{2t}}{e^{2T}(1+\sigma^2)-e^{2t}\sigma^2}x_i+\frac{1}{N}\biggl(\frac{e^{2T}}{e^{2T}\sigma^2-e^{2t}(\sigma^2-1)}-\frac{e^{2t}}{e^{2T}(1+\sigma^2)-e^{2t}\sigma^2}\biggr)\sum_{i=1}^N x_i \biggr].
\end{align*}

In Table \ref{table:LQ} below, we provide for various $N$ the estimated expectation from standard Monte Carlo $\delta_N$ and importance sampling $\hat{\delta}_N$ \eqref{eq:deltahat}, the empirical relative error from standard Monte Carlo $\tilde{\rho}(\delta_N)$ and for importance sampling $\tilde{\rho}(\hat{\delta}_N)$, and the exact value of the expectation \eqref{eq:desiredexpectation} from \eqref{eq:prelimitHJBsolLQ} and \eqref{eq:prelimitRiccatiEquations}. Here $\tilde{\rho}(\hat{\delta}_N)$ is as in \eqref{eq:importancesamplingrelativeerror} but where $M=1$ and the expectation and second moment are computed empirically. That is: 
\begin{align}\label{eq:IStilderho}
\tilde{\rho}(\hat{\delta}_N)\coloneqq \sqrt{\frac{\frac{1}{M}\sum_{j=1}^M \exp(-2NG(\hat{\mu}^{N,s,y,j}_T))\biggl(\prod_{i=1}^N Z^{i,N,s,y,j}\biggr)^2}{\hat{\delta}_N^2}-1}.
\end{align}
and similarly:
\begin{align}\label{eq:MCtilderho}
\tilde{\rho}(\delta_N)\coloneqq \sqrt{\frac{\frac{1}{M}\sum_{j=1}^M \exp(-2NG(\mu^{N,s,y,j}_T))}{\delta_N^2}-1},
\end{align}
%Z: maybe we can make the tables look nicer with multicolumn}
\begin{table}[h!]
\begin{center}
\begin{tabular}{||c| c c| c c| c||} 
\hline 
&\multicolumn{2}{c|}{IS Scheme}&\multicolumn{2}{c|}{Standard Monte Carlo}&\\ 
 \hline\hline
 $N$ & $\hat{\delta}_N$ & $\tilde{\rho}(\hat{\delta}_N)$ & $\delta_N$& $\tilde{\rho}(\delta_N)$ & Exact Value \\ [0.5ex] 
 \hline\hline
 5 & $2.3816\cdot 10^{-1}$& $3.4601\cdot 10^{-2}$& $2.3807\cdot 10^{-1}$& $1.0380$ &$2.3747\cdot 10^{-1}$\\ 
 \hline
 10 & $8.2550\cdot 10^{-2}$& $2.7101\cdot 10^{-2}$& $8.2551\cdot 10^{-2}$& $1.5721$ &$8.2412\cdot 10^{-2}$\\ 
 \hline
 15 & $2.8641\cdot 10^{-2}$& $2.4001\cdot 10^{-2}$& $2.8661\cdot 10^{-2}$& $2.1957$ &$2.8600\cdot 10^{-2}$\\ 
 \hline
 20 & $9.9373\cdot 10^{-3}$& $2.2369\cdot 10^{-2}$& $9.9165\cdot 10^{-3}$& $2.9589$ &$9.9254\cdot 10^{-3}$\\
 \hline
 25 & $3.4486\cdot 10^{-3}$& $2.1310\cdot 10^{-2}$& $3.4824\cdot 10^{-3}$& $3.9086$ &$3.4445\cdot 10^{-3}$\\ 
 \hline
 30 & $1.1968\cdot 10^{-3}$& $2.0550\cdot 10^{-2}$& $1.1951\cdot 10^{-1}$& $5.1415$ &$1.1954\cdot 10^{-3}$\\ 
 \hline
 50 & $1.7361\cdot 10^{-5}$& $1.8934\cdot 10^{-2}$& $1.7569\cdot 10^{-5}$& $14.5202$ &$1.7339\cdot 10^{-5}$\\
 \hline
 80 & $3.0339\cdot 10^{-8}$& $1.8003\cdot 10^{-2}$& $3.2426\cdot 10^{-8}$& $74.1871$&$3.0289\cdot 10^{-8}$\\ [1ex] 
 \hline
\end{tabular}
\caption{Comparison of the standard Monte Carlo estimator $\delta_N$ with the importance sampling estimator $\hat{\delta}_N$  \eqref{eq:deltahat} and their corresponding relative errors $\tilde{\rho}(\delta_N)$ \eqref{eq:MCtilderho} and $\tilde{\rho}(\hat{\delta}_N)$ \eqref{eq:IStilderho} with $G$ as in \eqref{eq:LQnumericsG} and particles obeying \eqref{eq:IPSforNumerics} with initial condition $y=.2$.}
\label{table:LQ}
\end{center}
\end{table}

In Figure \ref{fig:TypicalSumAndRelError}(a) we plot an average of trajectories of the sum \eqref{eq:sumforfirstnumericalexample} for the uncontrolled and controlled particles, respectively, computed using the same noise. In Figure \ref{fig:TypicalSumAndRelError}(b), we plot the analytical relative error for the Monte Carlo scheme ($\rho(\delta_N)$ from \eqref{eq:importancesamplingrelativeerror} with $v_i^N\equiv 0$) on a log scale for various values of $N$, again as computed via \eqref{eq:prelimitHJBsolLQ} and \eqref{eq:prelimitRiccatiEquations}. One can see that the first few values of $N$ considered in Table \ref{table:LQ} are in a region where the growth for the standard Monte Carlo relative error has not yet shifted from linear to exponential, where importance sampling becomes even more valuable. 

\begin{figure}
    \centering
    \subfloat[\centering An average of five trajectories of the sum \eqref{eq:sumforfirstnumericalexample}]{{\includegraphics[width=7cm]{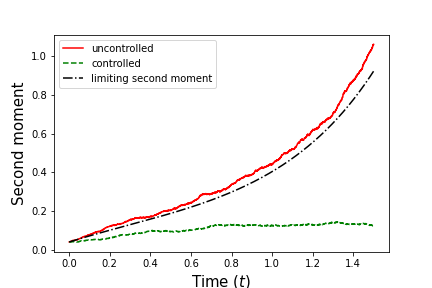} }}%
    \hspace{10pt}
    \subfloat[\centering Log-scale analytical single sample Monte Carlo relative error]{{\includegraphics[width=7cm]{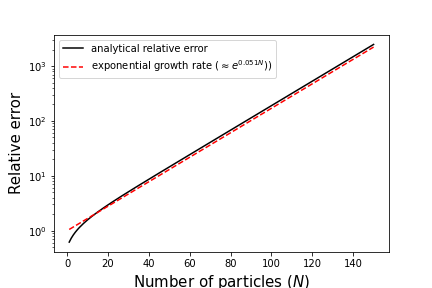} }}%
    \caption{Left: An average of 5 realizations of the sum \eqref{eq:sumforfirstnumericalexample} for the uncontrolled (red) and controlled (green) particles respectively against the variance of the limiting McKean-Vlasov equation from \eqref{eq:McKeanVlasovSecondMoment} (black) for $T\in[0,1.5]$, $y=.2$, and $N=50$. Right: the log-scaled exact relative error $\rho(\delta_N)$ as $N$ varies in the setting of Table \ref{table:LQ}, with $M=1$ fixed in \eqref{eq:importancesamplingrelativeerror} (black) against the exact asymptotic rate $\approx e^{.051 N}$ (red).}
    \label{fig:TypicalSumAndRelError}
\end{figure}

\subsection{Examples with Nonsmooth $G$}\label{subsection:nonsmooth}
Here we provide two examples where $G$ is not differentiable in the Lions sense (see Definition \ref{def:lionderivative}). The first example is constructed via a perturbation of the linear-quadratic regime which reduces the smoothness of $G$ in a ``global'' way, whereas the second reduces the smoothness of $G$ in a ``local'' way. In both examples, despite the lack of regularity we observe that the relative error of the importance sampling scheme is much smaller than that of the standard Monte Carlo method for all $N$. In Example \ref{example:absinside}, the relative error grows sublinearly, and in Example \ref{example:absoutside}, the importance sampling relative error even appears to be vanishing, exhibiting that the assumptions in Theorems \ref{theo:logefficient} and \ref{theo:expansionanalysis} are sufficient but not necessary. Thanks to the simple nature of the target function $G$, we are able to provide an interpretation of the role of the controls in each example --- see Remark \ref{remark:comparisonofabsinvsout}.
\begin{example}\label{example:absoutside}
In our second example, we consider \begin{align}\label{eq:absoutsideG}
G(\mu)=\biggl|\int_\R z \mu(dz)\biggr|,
\end{align} with drift and diffusion conforming to the linear-quadratic regime \eqref{eq:LQsetup} with $d=m=1$.

Note that while $G$ is continuous, it is not Lions differentiable at measures $\mu$ such that $\int_\R z\mu(dz)=0$, so we are not able to obtain an explicit solution to the HJB equation \eqref{eq:HJBequation} as in Section 3. However, for $\mu\in \mc{P}_2(\R)$ with nonzero mean, it is plain to see that $\partial_\mu G(\mu)[z]=\text{sign}\biggl(\int_{\R}x\mu(dx) \biggr)$ for all $z\in \R$, so $\partial_z\partial_\mu G(\mu)\equiv 0$ and $\partial^2_\mu G(\mu)\equiv 0$. Using this ansatz, taking $b_0=0$, and performing a similar computation to \cite{PW} Section 4, we obtain the formal (weak) viscosity solution to \eqref{eq:HJBequation} in this setting:
\begin{align*}
\Psi(t,\mu)=e^{(B+\bar{B})[T-t]}\biggl|\int_\R z\mu(dz) \biggr|+\frac{\sigma^2}{4(B+\bar{B})}[1-e^{2(B+\bar{B})(T-t)}]
\end{align*}
so that we expect the control from \eqref{eq:optimalcontrol} in Theorem \ref{theo:logefficient}  to be of the form
\begin{align*}
v(t,x_1,...,x_N)&=-\sigma \partial_\mu \Psi(t,\mu^N_x)[x_i]\\ 
& = -\sigma e^{(B+\bar{B})(T-t)}\text{sign}\biggl(\frac{1}{N}\sum_{j=1}^N x_j\biggr).
\end{align*}
Indeed, via the equivalence with the small-noise regime as discussed in Remark \ref{remark:comparisonwiththesmallnoisecase}, such a choice of control can be seen to solve the minimization problem \eqref{eq:LDP} with $F(\mu)=G(\mu_T)$.

In Table \ref{table:absoutside}, we provide for various $N$ the estimated expectation and empirical relative error for our importance sampling scheme compared with standard Monte Carlo, selecting  $B=-1$, $\bar{B}=2,$ $s=0$, and $y_i=.4$ for all $i\in\bb{N}$. As we see, the relative error for the importance sampling scheme seems to be vanishing as $N$ increases, whereas the standard Monte Carlo relative error increases exponentially.
\begin{table}[h!]
\begin{center}
\begin{tabular}{||c | c c| c c||} 
\hline 
&\multicolumn{2}{c|}{IS Scheme}&\multicolumn{2}{c||}{Standard Monte Carlo}\\ 
 \hline\hline
 $N$ & $\hat{\delta}_N$ & $\tilde{\rho}(\hat{\delta}_N)$ & $\delta_N$& $\tilde{\rho}(\delta_N)$  \\ [0.5ex] 
 \hline\hline
 5 & $2.6644\cdot 10^{-2}$&$3.7070\cdot 10^{-1}$& $2.6653\cdot 10^{-2}$& $2.9930$\\ 
 \hline
 10 & $9.0804\cdot 10^{-4}$&$3.1427\cdot 10^{-1}$& $9.2009\cdot 10^{-4}$& $12.8032$\\ 
 \hline
 15 & $3.0269\cdot 10^{-5}$& $2.6350\cdot 10^{-1}$& $3.0645\cdot 10^{-5}$& $53.9410$\\
 \hline
 20 & $9.9533\cdot 10^{-7}$& $2.2227\cdot 10^{-1}$& $1.0625\cdot 10^{-6}$& $337.0401$\\
 \hline
 25 & $3.2471\cdot 10^{-8}$& $1.8900\cdot 10^{-1}$& $3.5791\cdot 10^{-8}$& $561.9164$\\
 \hline
 30 & $1.0537\cdot 10^{-9}$& $1.6154\cdot 10^{-1}$& $1.2253\cdot 10^{-9}$& $1016.1656$\\
 %50 & $1.1388\cdot 10^{-1}5$& $9.1072\cdot 10^{-2}$& $2.0601\cdot 10^{-1}6$& $555.6435$
%\\
 %\hline
 %100 & $1.32046e-30$& $1.5844\cdot 10^{-2}$& $1.0525e-32$& $1291.2316$\\ [1ex] 
 \hline
\end{tabular}
\caption{Comparison of the standard Monte Carlo estimator $\delta_N$  with the importance sampling estimator $\hat{\delta}_N$  \eqref{eq:deltahat} and their corresponding relative errors $\tilde{\rho}(\delta_N)$ \eqref{eq:MCtilderho} and $\tilde{\rho}(\hat{\delta}_N)$ \eqref{eq:IStilderho} with $G$ as in  \eqref{eq:absoutsideG} and particles obeying \eqref{eq:IPSforNumerics}  with initial condition $y=.4$ (Example \ref{example:absoutside}).}
\label{table:absoutside}
\end{center}
\end{table}
\end{example}

\begin{example}\label{example:absinside}

\textcolor{white}{.}

\begin{table}[h!]
\begin{center}
\begin{tabular}{||c | c c |c c||} 
\hline 
&\multicolumn{2}{c|}{IS Scheme}&\multicolumn{2}{c||}{Standard Monte Carlo}\\ 
 \hline\hline
 $N$ & $\hat{\delta}_N$ & $\tilde{\rho}(\hat{\delta}_N)$ & $\delta_N$& $\tilde{\rho}(\delta_N)$  \\ [0.5ex] 
 \hline\hline
 5 & $2.1355\cdot 10^{-2}$&$4.7870\cdot 10^{-1}$& $2.1382\cdot 10^{-2}$& $2.3263$\\ 
 \hline
 10 & $5.6703\cdot 10^{-4}$& $6.0605\cdot 10^{-1}$& $5.6806\cdot 10^{-4}$& $6.6374$\\ 
 \hline
 15 & $1.5087\cdot 10^{-5}$& $7.2452\cdot 10^{-1}$& $1.5040\cdot 10^{-5}$& $17.3337$\\ 
 \hline
 20 & $4.0157\cdot 10^{-7}$& $8.3416\cdot 10^{-1}$& $4.1260\cdot 10^{-7}$& $51.1058$\\
 \hline
 25 & $1.0687\cdot 10^{-8}$& $9.4327\cdot 10^{-1}$& $1.0296\cdot 10^{-8}$& $97.8820$\\
 \hline
 30 & $2.8470\cdot 10^{-10}$& $1.0524$& $2.8800\cdot 10^{-10}$& $280.5945$\\
 %50 & $1.4323e-16$& $1.5006$& $6.0736e-17$& $633.9203$\\
 %\hline
 %100 & $2.5807e-32$& $2.8829$& $4.9646e-35$& $741.6851$\\ [1ex] 
 \hline
\end{tabular}
\caption{Comparison of the standard Monte Carlo estimator $\delta_N$  with the importance sampling estimator $\hat{\delta}_N$  \eqref{eq:deltahat} and their corresponding relative errors $\tilde{\rho}(\delta_N)$ \eqref{eq:MCtilderho} and $\tilde{\rho}(\hat{\delta}_N)$ \eqref{eq:IStilderho} with $G$ as in \eqref{eq:absinsideG} and particles obeying \eqref{eq:IPSforNumerics} with initial condition $y=.4$ (Example \ref{example:absinside}).}
\label{table:absinside}
\end{center}
\end{table}

Finally, we consider the same regime as Example \ref{example:absoutside}, but now taking $G$ in the target expectation \eqref{eq:desiredexpectation} to be:
\begin{align}\label{eq:absinsideG}
G(\mu)=\int_{\R}|z|\mu(dz).
\end{align}
Once again $G$ is not everywhere Lions differentiable, in this case at measures such that the singleton $\br{0}\in \R$ has positive measure. Indeed, for $\mu\in\mc{P}_2(\R)$ such that $\mu(\br{0})=0$, it is clear to see that $\partial_\mu G(\mu)[z]=\text{sign}(z)$, so $\partial^2_\mu G(\mu)=0$ and for $z\neq 0$, $\partial_z\partial_\mu G(\mu)[z]=0$ (this follows via a similar computation to \cite{CD} Example 1 in Section 5.2.2).

Again we follow the methods of \cite{PW}, taking $b_0=0$, and obtain the formal (weak) viscosity solution to \eqref{eq:HJBequation}:
\begin{align*}
\Psi(t,\mu)&=e^{B(T-t)}\int_{\R}|z|\mu(dz)+[e^{(B+\bar{B})(T-t)}-e^{B(T-t)}]\int_{\R}z\mu(dz)\int_{\R}\text{sign}(z)\mu(dz)\\ 
&+\frac{\sigma^2}{4B(B+\bar{B})}[e^{2B(T-t)}(B+\bar{B})-Be^{2(B+\bar{B})(T-t)}-\bar{B}]\biggl[\int_{\R}\text{sign}(z)\mu(dz)\biggr]^2+\frac{\sigma^2}{4B}[1-e^{2B(T-t)}].
\end{align*}

Using that for $\mu\in\mc{P}_2(\R)$ with $\mu(\br{0})=0$, $\mu\mapsto \int_\R \text{sign}(z)\mu(dz)$ is Lions differentiable with derivative $0$, we expect the control from  \eqref{eq:optimalcontrol} in Theorem \ref{theo:logefficient}  to be of the form
\begin{align*}
v(t,x_1,...,x_N)&=-\sigma \partial_\mu \Psi(t,\mu^N_x)[x_i]\\ 
& = -\sigma \biggl[e^{B(T-t)}\text{sign}(x_i)+\biggl(e^{(B+\bar{B})(T-t)}-e^{B(T-t)}\biggr)\frac{1}{N}\sum_{j=1}^N \text{sign}(x_j)\biggr].
\end{align*}

In Table \ref{table:absinside}, we provide for various $N$ the estimated expectation and empirical relative error for our importance sampling scheme compared with standard Monte Carlo, selecting $B=-1$, $\bar{B}=2$, $s=0$, and $y_i=.4$ for all $i\in \N$. As we see, the importance sampling scheme has relative error which increases sublinearly as $N$ becomes large, in contrast to the standard Monte Carlo relative error which increases exponentially. 

\end{example}
%NOTE - STILL NEED MORE SAMPLES FOR $N=100$ FOR BOTH EXAMPLES IN THIS SECTION - MC RELATIVE ERROR SHOULD BE LARGER AND THE ESTIMATE SHOULD AT LEAST BE A STANDARD DEVIATION WITHIN THE ESTIMATE FROM IMPORTANCE SAMPLING}
\begin{remark}\label{remark:comparisonofabsinvsout}
We can think about the controls from Examples \ref{example:absoutside} and \ref{example:absinside} in terms of the role they play in sampling the ``rare event'' of $G$ being close to $0$ for the particle system in each case. When the absolute value appears outside the empirical mean, as in Example \ref{example:absoutside}, we only need to be concerned with controlling the magnitude of the empirical mean, a characteristic of the particle ensemble. This is reflected in the controls, which are identical for every particle and force them in the opposite direction of the sign of the empirical mean. Meanwhile, in the situation of Example \eqref{example:absinside}, to make $G$ close to zero we need the position of each individual particle to be near $0$. Thus, the control uses information about the position of each particle relative to the origin. However, information about the ensemble of particles also still must appear because of the global interaction in the particles' dynamics, which we see in the form of the difference in the proportions of the particles which are positive and negative at a given time. 

We note that a similar combination of global information on the ensemble of particles and local information about the $i$'th particle is also used for controls in the standard linear-quadratic regime of Section \ref{sec:LQ}. See in particular the control used in our simple numerical example in Subsection \ref{subsection:LQNumerics}. Numerical experiments confirm that the role of both the global and local information captured in the control is necessary for variance reduction in such cases. For example, using the simplified control $\tilde{v}(t,x_1,...,x_N)=-\sigma e^{B(T-t)}\text{sign}(x_i)$ in Example \ref{example:absinside}, we observed performance that was comparable or even worse than standard Monte Carlo.

It is also worth commenting on the choice of initial conditions in the examples of Subsection \ref{subsection:nonsmooth}. We choose $y_i=.4$ for all $i\in \bb{N}$, so that $\nu=\delta_{.4}$ is away from the region of discontinuity of $\partial_\mu \Psi(t,\cdot)[z]$. We observed that if we took initial conditions such that $\nu$ is closer to the region of discontinuity many more rogue trajectories appeared in the controlled dynamics and an extremely high sample size was needed before the importance sampling scheme could be seen to perform better than standard Monte Carlo. This is once again similar to the case of small-noise diffusions---see Table 3.1 compared to Table 3.2 in \cite{VW} and the discussion on pp. 1786-1787 therein.  
\end{remark}
We end this section by remarking on the interesting problem of verifying that the formal viscosity solutions to the HJB equation \eqref{eq:HJBequation} constructed in Examples \ref{example:absoutside} and \ref{example:absinside} are indeed viscosity solutions in the sense of \cite{CGKPR1}. To our knowledge, there are no existing examples in the current literature of explicit viscosity solutions which are not also classical solutions to an HJB equation on Wasserstein space.

\section{Proof of the Main Results}\label{sec:proofs}
\allowdisplaybreaks
We begin with a lemma which allows us to express the numerator of $R(\hat{\delta}_N)$ {(see \eqref{eq:Rdeltahat})} in an alternative way:
\begin{lem}\label{lemma:RhatnumeratorGirsanov}
{Let Assumption \ref{assumption:forLDP} \ref{assumption:uncontrolleduniqueness} and Assumption \ref{assumption:forlogefficiency} \ref{assumption:Gbounded} hold. Then,} for any $s\in [0,T],y_1,...,y_N\in \R^d$, and for $\hat{\mu}$ as in \eqref{eq:controlledempiricalmeasure}, $Z^{i,N,s,y}$ as in \eqref{eq:Zs}, and $v_i^N:[s,T]\times\R^{dN}\tto \R^m,i=1,...,N$ bounded:
\begin{align*}
\E\biggl[\exp\left(-2NG(\hat{\mu}^{N,s,y}_T)\right)\prod_{i=1}^N (Z^{i,N,s,y})^2 \biggr]=\E\biggl[\exp\left(-2NG(\tilde{\mu}^{N,s,y}_T)\right)\exp\biggl(\int_s^T \sum_{i=1}^N|v^N_i(t,\tilde{X}^{1,N,s,y}_t,...,\tilde{X}^{N,N}_t)|^2dt\biggr)\biggr],
\end{align*}
where 
$\tilde{\mu}^{N,s,y}_t = \frac{1}{N}\sum_{i=1}^N \delta_{\tilde{X}^{i,N,s,y}_t}$ and $\tilde{X}^{i,N,s,y}$ satisfy: 
\begin{align*}
d\tilde{X}^{i,N,s,y}_t &= [b(\tilde{X}^{i,N,s,y}_t,\tilde{\mu}^{N,s,y}_t) - \sigma(\tilde{X}^{i,N,s,y}_t,\tilde{\mu}^{N,s,y}_t) v^N_i(t,\tilde{X}^{1,N,s,y}_t,...,\tilde{X}^{N,N,s,y}_t)]dt + \sigma(\tilde{X}^{i,N,s,y}_t,\tilde{\mu}^{N,s,y}_t) d\tilde{W}^i_t,\\ 
\tilde{X}^{i,N,s,y}_s&=y_i,
\end{align*}
and $\tilde{W}^i$ are IID standard $m$-dimensional Brownian motions initialized at $\tilde{W}^i_s=0$.
\end{lem}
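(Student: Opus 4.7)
The plan is to recognize that the squared Girsanov density $\prod_i (Z^{i,N,s,y})^2$ is not itself an exponential martingale, but it can be decomposed as an exponential martingale times a deterministic functional of $v^N$. Specifically, I would first rewrite
$$(Z^{i,N,s,y})^2 = \exp\!\left(\int_s^T (-2 v_i^N)\cdot d\hat W^i_t - \tfrac{1}{2}\int_s^T |{-2v_i^N}|^2 dt\right)\cdot \exp\!\left(\int_s^T |v_i^N|^2 dt\right),$$
where $v_i^N$ is evaluated at $(t,\hat X^{1,N,s,y}_t,\dots,\hat X^{N,N,s,y}_t)$. Since $v_i^N$ is bounded by hypothesis, Novikov's condition is satisfied, so the first factor, taken as a product over $i$, defines a bona fide Radon--Nikodym density $D := \prod_{i=1}^N \exp\!\left(-2\int_s^T v_i^N\cdot d\hat W^i_t - 2\int_s^T |v_i^N|^2 dt\right)$ with respect to $\Prob$.

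Next, I would invoke Girsanov's theorem to produce a measure $\Prob_D$ with $d\Prob_D/d\Prob = D$, under which the processes $W^{i,*}_t := \hat W^i_t + 2\int_s^t v_i^N(\tau,\hat X)\,d\tau$ are independent $m$-dimensional Brownian motions with $W^{i,*}_s = 0$. Rewriting the SDE for the controlled particles in terms of $W^{i,*}$, a short computation gives
$$d\hat X^{i,N,s,y}_t = [b(\hat X^{i,N,s,y}_t,\hat\mu^{N,s,y}_t) - \sigma(\hat X^{i,N,s,y}_t,\hat\mu^{N,s,y}_t)v_i^N(t,\hat X^{1,N,s,y}_t,\dots,\hat X^{N,N,s,y}_t)]dt + \sigma(\hat X^{i,N,s,y}_t,\hat\mu^{N,s,y}_t)\,dW^{i,*}_t,$$
which is exactly the SDE defining $\tilde X^{i,N,s,y}$. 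By weak uniqueness for this system, the joint law of $(\hat X^{1,N,s,y},\dots,\hat X^{N,N,s,y})$ under $\Prob_D$ coincides with the joint law of $(\tilde X^{1,N,s,y},\dots,\tilde X^{N,N,s,y})$ under $\Prob$.

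Assembling the pieces, I would compute
\begin{align*}
\E\!\left[\exp(-2NG(\hat\mu^{N,s,y}_T))\prod_{i=1}^N (Z^{i,N,s,y})^2\right] &= \E\!\left[\exp(-2NG(\hat\mu^{N,s,y}_T))\,D\,\exp\!\Bigl(\sum_{i=1}^N\int_s^T |v_i^N|^2 dt\Bigr)\right]\\
&= \E^{\Prob_D}\!\left[\exp(-2NG(\hat\mu^{N,s,y}_T))\exp\!\Bigl(\sum_{i=1}^N\int_s^T |v_i^N(t,\hat X)|^2 dt\Bigr)\right]\\
&= \E\!\left[\exp(-2NG(\tilde\mu^{N,s,y}_T))\exp\!\Bigl(\sum_{i=1}^N\int_s^T |v_i^N(t,\tilde X)|^2 dt\Bigr)\right],
\end{align*}
which is the claimed identity.

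The main subtlety is verifying weak uniqueness for the $\tilde X$ system, since this is what justifies transferring the integrand expressed in terms of $\hat X$ under $\Prob_D$ into the same integrand in terms of $\tilde X$ under $\Prob$. Because the drift of $\tilde X$ differs from that of the uncontrolled system $X$ from \eqref{eq:IPS} only by the bounded measurable term $-\sigma v_i^N$, weak uniqueness here follows by another Girsanov transformation back to the uncontrolled system, whose strong (hence weak) uniqueness is guaranteed by Assumption \ref{assumption:forLDP}\ref{assumption:uncontrolleduniqueness}. Everything else is an algebraic manipulation of stochastic exponentials plus a single application of Girsanov's theorem.
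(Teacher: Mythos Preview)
Your proposal is correct and follows essentially the same route as the paper: both decompose $\prod_i (Z^{i,N,s,y})^2$ as the stochastic exponential of $-2v^N$ times the residual factor $\exp\bigl(\sum_i\int_s^T |v_i^N|^2\,dt\bigr)$, then apply Girsanov to pass from the $\hat X$-system to the $\tilde X$-system. Your version is slightly more explicit about the justifications (Novikov, and the weak uniqueness argument via a second Girsanov transformation back to the uncontrolled system), which the paper leaves implicit.
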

\begin{proof}
This follows by a simple application of the Girsanov theorem, which is valid due to the boundedness of the $v^N_i$'s { and $G$}. We have:
\begin{align*}
&\E\biggl[\exp\left(-2NG(\hat{\mu}^{N,s,y}_T)\right)\prod_{i=1}^N (Z^{i,N,s,y})^2 \biggr]\\
&= \E\biggl[\exp\left(-2NG(\hat{\mu}^{N,s,y}_T)\right)\exp\biggl(-2\sum_{i=1}^N\int_s^T v^N_i(t,\hat{X}^{1,N,s,y}_t,...,\hat{X}^{N,N,s,y}_t)\cdot d\hat{W}^i_t \\ 
&- \int_s^T \sum_{i=1}^N|v^N_i(t,\hat{X}^{1,N,s,y}_t,...,\hat{X}^{N,N,s,y}_t)|^2dt\biggr)\biggr]\nonumber\\ 
&= \E\biggl[\exp\left(-2NG(\hat{\mu}^{N,s,y}_T)\right)\exp\biggl(-2\sum_{i=1}^N\int_s^T v^N_i(t,\hat{X}^{1,N,s,y}_t,...,\hat{X}^{N,N,s,y}_t)\cdot d\hat{W}^i_t \\ 
&- \frac{1}{2}\int_s^T \sum_{i=1}^N|2v^N_i(t,\hat{X}^{1,N,s,y}_t,...,\hat{X}^{N,N,s,y}_t)|^2dt\biggr)\exp\biggl(\int_s^T \sum_{i=1}^N|v^N_i(t,\hat{X}^{1,N,s,y}_t,...,\hat{X}^{N,N,s,y}_t)|^2dt\biggr)\biggr]\nonumber\\ 
& = \E\biggl[\exp\left(-2NG(\tilde{\mu}^{N,s,y}_T)\right)\exp\biggl(\int_s^T \sum_{i=1}^N|v^N_i(t,\tilde{X}^{1,N,s,y}_t,...,\tilde{X}^{N,N}_t)|^2dt\biggr)\biggr].\nonumber\\ 
\end{align*}
\end{proof}
\subsection{Proof of Theorem \ref{theo:logefficient}}
Let $\Psi$ be a classical subsolution to \eqref{eq:HJBequation}. %, and denote $v_i^N(t,x,\nu)=-\sigma^\top(x,\nu)\partial_\mu\Psi(t,\mu)[x]$ for $t\in [0,T],x\in\R^d,\mu\in\mc{P}_2(\R^d)$. Then $v_i^N(t,x_i,\mu^N_x)=v_i^N(t,x_1,...,x_N)$ from Equation \eqref{eq:optimalcontrol}. 
We make the choice of control from \eqref{eq:optimalcontrol} for \eqref{eq:controlledparticles} and \eqref{eq:Zs} throughout this proof. Due to the assumed boundedness of these controls, we have via Lemma \ref{lemma:RhatnumeratorGirsanov}:
\begin{align*}
&\E\biggl[\exp\left(-2NG(\hat{\mu}^{N,s,y}_T)\right)\prod_{i=1}^N (Z^{i,N,s,y})^2 \biggr]\\ 
&=\E\biggl[\exp\left(-2NG(\tilde{\mu}^{N,s,y}_T)\right)\exp\biggl(N\int_s^T \int_{\R^d}|\sigma^\top(x,\tilde{\mu}^{N,s,y}_t) \partial_\mu\Psi(t,\tilde{\mu}^{N}_t)[x]|^2\tilde{\mu}^{N,s,y}_t(dx)dt\biggr)\biggr],
\end{align*}
where $\tilde{\mu}^{N,s,y}_t = \frac{1}{N}\sum_{i=1}^N \delta_{\tilde{X}^{i,N,s,y}_t}$ and $\tilde{X}^{i,N,s,y}$ satisfy: 
\begin{align*}
d\tilde{X}^{i,N,s,y}_t &=[b(\tilde{X}^{i,N,s,y}_t,\tilde{\mu}^{N,s,y}_t) + \sigma\sigma^\top(\tilde{X}^{i,N,s,y}_t,\tilde{\mu}^{N,s,y}_t) \partial_\mu\Psi(t,\tilde{\mu}^{N,s,y}_t)[\tilde{X}^{i,N,s,y}_t]]dt \\ 
&+ \sigma(\tilde{X}^{i,N,s,y}_t,\tilde{\mu}^{N,s,y}_t) d\tilde{W}^i_t,\quad \tilde{X}^{i,N,s,y}_s=y_i.
\end{align*}
Strong existence and uniqueness of the above system of SDEs of all $N$ follows from the assumed boundedness of $\sigma$ and $\partial_\mu \Psi$ along with Assumption \ref{assumption:uncontrolleduniqueness} and an application of Girsanov theorem---see the discussion on p.81 of \cite{BDF}.

Applying Proposition \eqref{prop:prelimitLDPexpression} with $F(\mu) = 2G(\mu_T)-\int_s^T  \int_{\R^d}|\sigma^\top(z,\mu_t) \partial_\mu\Psi(t,\mu_t)[z]|^2\mu_t(dz)dt$ ($F$ is bounded by assumption and its continuity follows from, e.g., \cite{DE} Theorem A.3.18) and $\tilde{b}(t,x,\nu) = b(x,\nu)+\sigma\sigma^\top(x,\nu) \partial_\mu \Psi(t,\nu)[x]$, we get 
\begin{equation}\label{eq:variationalrepfordesiredterm}
  \begin{split}
-\frac{1}{N}\log\E\biggl[\exp\left(-2NG(\hat{\mu}^{N,s,y}_T)\right)\prod_{i=1}^N (Z^{i,N,s,y})^2 \biggr] = \inf_{u^N\in \mc{U}^N}\left\{\E\left[\frac{1}{2N}\sum_{i=1}^N\int_s^T |u^N_i(t)|^2dt\right]+2\E\left[G(\bar{\mu}^{N,s,y}_T)\right]\right.&\\ 
\left.-\E\left[\frac{1}{N}\sum_{i=1}^N\int_s^T |\sigma^\top(\bar{X}^{i,N,s,y}_t,\bar{\mu}^{N,s,y}) \partial_\mu \Psi(t,\bar{\mu}^{N,s,y}_t)[\bar{X}^{i,N,s,y}_t]|^2dt\right]\right\}&,
\end{split}
\end{equation}
where $\bar{\mu}^{N,s,y}_t = \frac{1}{N}\sum_{i=1}^N \delta_{\bar{X}^{i,N,s,y}_t}$ and
\begin{align*}
d\bar{X}^{i,N,s,y}_t &= [b(\bar{X}^{i,N,s,y}_t,\bar{\mu}^{N,s,y}_t)+\sigma\sigma^\top(\bar{X}^{i,N,s,y}_t,\bar{\mu}^{N,s,y}_t)\partial_\mu \Psi(t,\bar{\mu}^{N,s,y}_t)[\bar{X}^{i,N,s,y}_t]+ \sigma(\bar{X}^{i,N,s,y}_t,\bar{\mu}^{N,s,y}_t) u^N_i(t)]dt\\ 
&+\sigma(\bar{X}^{i,N,s,y}_t,\bar{\mu}^{N,s,y}_t) d\tilde{W}^i_t,\\ 
\bar{X}^{i,N,s,y}_s&=y_i.
\end{align*}

Fix any $u^N\in\mc{U}^N$ and let $\br{\bar{X}^{i,N,s,y}}_{i=1}^N$ satisfy the above controlled equations with this choice of control.

Letting $\tilde{\Psi}(t,x_1,...,x_N)=\Psi(t,\frac{1}{N}\sum_{i=1}^N \delta_{x_i})$, where $\Psi$ is our classical subsolution, we apply It\^o's formula to get 
\begin{align*}
&d\Psi(t,\bar{\mu}^{N,s,y}_t)\\ 
&=\biggl[\partial_t\Psi(t,\bar{\mu}^{N,s,y}_t)+\sum_{i=1}^N\biggl\lbrace[b(\bar{X}^{i,N,s,y}_t,\bar{\mu}^{N,s,y}_t)+\sigma\sigma^\top(\bar{X}^{i,N,s,y}_t,\bar{\mu}^{N,s,y}_t)\partial_\mu \Psi(t,\bar{\mu}^{N,s,y}_t)[\bar{X}^{i,N,s,y}_t]\\ 
&+\sigma(\bar{X}^{i,N,s,y}_t,\bar{\mu}^{N,s,y}_t) u^N_i(t)]\cdot \partial_{x_i}\tilde{\Psi}(t,\bar{X}^{i,N,s,y}_t,...,\bar{X}^{N,N,s,y}_t)\\ 
&+\frac{1}{2}\sigma\sigma^\top(\bar{X}^{i,N,s,y}_t,\bar{\mu}^{N,s,y}_t) : \partial^2_{x_i}\tilde{\Psi}(t,\bar{X}^{i,N,s,y}_t,...,\bar{X}^{N,N,s,y}_t)\biggr\rbrace \biggr]dt\\ 
& + \sum_{i=1}^N[\partial_{x_i}\tilde{\Psi}]^\top(t,\bar{X}^{i,N,s,y}_t,...,\bar{X}^{N,N,s,y}_t)\sigma(\bar{X}^{i,N,s,y}_t,\bar{\mu}^{N,s,y}_t) d\tilde{W}^i_t\\ 
& = \biggl[\partial_t\Psi(t,\bar{\mu}^{N,s,y}_t)+\frac{1}{N}\sum_{i=1}^N\biggl\lbrace[b(\bar{X}^{i,N,s,y}_t,\bar{\mu}^{N,s,y}_t)+\sigma\sigma^\top(\bar{X}^{i,N,s,y}_t,\bar{\mu}^{N,s,y}_t)\partial_\mu \Psi(t,\bar{\mu}^{N,s,y}_t)[\bar{X}^{i,N,s,y}_t]\\ 
&+\sigma(\bar{X}^{i,N,s,y}_t,\bar{\mu}^{N,s,y}_t) u^N_i(t)]\cdot \partial_\mu\Psi(t,\bar{\mu}^{N,s,y}_t)[\bar{X}^{i,N,s,y}_t]+\frac{1}{2}\sigma\sigma^\top(\bar{X}^{i,N,s,y}_t,\bar{\mu}^{N,s,y}_t) :\partial_z\partial_\mu\Psi(t,\bar{\mu}^{N,s,y}_t)[\bar{X}^{i,N,s,y}_t]\biggr\rbrace \biggr]dt \\ 
&+ \frac{1}{N}\sum_{i=1}^N[\partial_\mu\Psi]^\top(t,\bar{\mu}^{N,s,y}_t)[\bar{X}^{i,N,s,y}_t]\sigma(\bar{X}^{i,N,s,y}_t,\bar{\mu}^{N,s,y}_t) d\tilde{W}^i_t\\ 
&+\frac{1}{N^2}\sum_{i=1}^N\sigma\sigma^\top(\bar{X}^{i,N,s,y}_t,\bar{\mu}^{N,s,y}_t):\partial^2_\mu\Psi(t,\bar{\mu}^{N,s,y}_t)[\bar{X}^{i,N,s,y}_t,\bar{X}^{i,N,s,y}_t]dt,\\ 
\end{align*} 
where in the second step we applied Proposition \ref{prop:empprojderivatives}. Note here we have used the assumed regularity of $\Psi$ from (i) in Definition \ref{defi:subsolution}.

Now using that $\Psi$ satisfies (ii) in Definition \ref{defi:subsolution}, we have: %{(need $\bar{\mu}^N\in\mc{P}_2$) - this is implicitly assumed since we have strong uniqueness}:
\begin{align*}
&\partial_t\Psi(t,\bar{\mu}^{N,s,y}_t)+\frac{1}{N}\sum_{i=1}^N\biggl\lbrace[b(\bar{X}^{i,N,s,y}_t,\bar{\mu}^{N,s,y}_t)\\ 
&\hspace{5cm}+\sigma\sigma^\top(\bar{X}^{i,N,s,y}_t,\bar{\mu}^{N,s,y}_t)\partial_\mu \Psi(t,\bar{\mu}^{N,s,y}_t)[\bar{X}^{i,N,s,y}_t]]\cdot \partial_\mu\Psi(t,\bar{\mu}^{N,s,y}_t)[\bar{X}^{i,N,s,y}_t]\\ 
&+\frac{1}{2}\sigma\sigma^\top(\bar{X}^{i,N,s,y}_t,\bar{\mu}^{N,s,y}_t) :\partial_z\partial_\mu\Psi(t,\bar{\mu}^{N,s,y}_t)[\bar{X}^{i,N,s,y}_t]\biggr\rbrace\\ 
&\geq \frac{3}{2}\frac{1}{N}\sum_{i=1}^N \partial_\mu \Psi(t,\bar{\mu}^{N,s,y}_t)[\bar{X}^{i,N,s,y}_t]\cdot[\sigma\sigma^\top(\bar{X}^{i,N,s,y}_t,\bar{\mu}^{N,s,y}_t) \partial_\mu \Psi(t,\bar{\mu}^{N,s,y}_t)[\bar{X}^{i,N,s,y}_t]]\\ 
& = \frac{3}{2}\frac{1}{N}\sum_{i=1}^N|\sigma^\top(\bar{X}^{i,N,s,y}_t,\bar{\mu}^{N,s,y}_t) \partial_\mu \Psi(t,\bar{\mu}^{N,s,y}_t)[\bar{X}^{i,N,s,y}_t]|^2.
\end{align*}

Combining the above two displays:
\begin{align*}
\Psi(T,\bar{\mu}^{N,s,y}_T)&\geq \Psi\left(s,\frac{1}{N}\sum_{i=1}^N\delta_{y_i}\right)+ \frac{1}{N}\sum_{i=1}^N \int_s^T u^N_i(t)\cdot [\sigma^\top(\bar{X}^{i,N,s,y}_t,\bar{\mu}^{N,s,y}_t) \partial_\mu \Psi(t,\bar{\mu}^{N,s,y}_t)[\bar{X}^{i,N,s,y}_t]]dt \\ 
&+ \frac{1}{N}\sum_{i=1}^N\int_s^T [\partial_\mu\Psi]^\top(t,\bar{\mu}^{N,s,y}_t)[\bar{X}^{i,N,s,y}_t]\sigma(\bar{X}^{i,N,s,y}_t,\bar{\mu}^{N,s,y}_t) d\tilde{W}^i_t\\ 
&+\frac{1}{N^2}\sum_{i=1}^N\int_s^T \sigma\sigma^\top(\bar{X}^{i,N,s,y}_t,\bar{\mu}^{N,s,y}_t):\partial^2_\mu\Psi(t,\bar{\mu}^{N,s,y}_t)[\bar{X}^{i,N,s,y}_t,\bar{X}^{i,N,s,y}_t]dt \\ 
&+ \int_s^T  \frac{3}{2}\frac{1}{N}\sum_{i=1}^N|\sigma^\top(\bar{X}^{i,N,s,y}_t,\bar{\mu}^{N,s,y}_t) \partial_\mu \Psi(t,\bar{\mu}^{N,s,y}_t)[\bar{X}^{i,N,s,y}_t]|^2 dt\\ 
& = \Psi\left(s,\frac{1}{N}\sum_{i=1}^N\delta_{y_i}\right)+ \frac{1}{2N}\sum_{i=1}^N \int_s^T |u^N_i(t)+\sigma^\top(\bar{X}^{i,N,s,y}_t,\bar{\mu}^{N,s,y}_t) \partial_\mu \Psi(t,\bar{\mu}^{N,s,y}_t)[\bar{X}^{i,N,s,y}_t]|^2dt\\ 
&+ \int_s^T  \frac{1}{N}\sum_{i=1}^N|\sigma^\top(\bar{X}^{i,N,s,y}_t,\bar{\mu}^{N,s,y}_t) \partial_\mu \Psi(t,\bar{\mu}^{N,s,y}_t)[\bar{X}^{i,N,s,y}_t]|^2 dt\\ 
&- \int_s^T  \frac{1}{2N}\sum_{i=1}^N|u^{N}_i(t)|^2 dt+
\frac{1}{N}\sum_{i=1}^N\int_s^T [\partial_\mu\Psi]^\top(t,\bar{\mu}^{N,s,y}_t)[\bar{X}^{i,N,s,y}_t]\sigma(\bar{X}^{i,N,s,y}_t,\bar{\mu}^{N,s,y}_t) d\tilde{W}^i_t]\\ 
&+\frac{1}{N^2}\sum_{i=1}^N\int_s^T \sigma\sigma^\top(\bar{X}^{i,N,s,y}_t,\bar{\mu}^{N,s,y}_t):\partial_\mu\Psi(t,\bar{\mu}^{N,s,y}_t)[\bar{X}^{i,N,s,y}_t,\bar{X}^{i,N,s,y}_t]dt.
\end{align*}
Then, for any choice of $u^N\in\mc{U}^N$:
\begin{align*}
&\E[\frac{1}{2N}\sum_{i=1}^N\int_s^T |u^N_i(t)|^2dt]+2\E\left[G(\bar{\mu}^{N,s,y}_T)\right]-\E\left[\frac{1}{N}\sum_{i=1}^N\int_s^T |\sigma^\top(\bar{X}^{i,N,s,y}_t,\bar{\mu}^{N,s,y}_t) \partial_\mu \Psi(t,\bar{\mu}^{N,s,y}_t)[\bar{X}^{i,N,s,y}_t]|^2dt\right]\\ 
&\geq\E[G(\bar{\mu}^{N,s,y}_T)]+\E[\Psi(T,\bar{\mu}^{N,s,y}_T)]+ \E\left[\frac{1}{2N}\sum_{i=1}^N\int_s^T |u^N_i(t)|^2dt\right]\\ 
&-\E\left[\frac{1}{N}\sum_{i=1}^N\int_s^T |\sigma^\top(\bar{X}^{i,N,s,y}_t,\bar{\mu}^{N,s,y}_t) \partial_\mu \Psi(t,\bar{\mu}^{N,s,y}_t)[\bar{X}^{i,N,s,y}_t]|^2dt\right]\\ 
&\geq \E[G(\bar{\mu}^{N,s,y}_T)]+ \Psi\left(s,\frac{1}{N}\sum_{i=1}^N \delta_{y_i}\right)+\E\left[\frac{1}{2N}\sum_{i=1}^N \int_s^T |u^N_i(t)+\sigma^\top(\bar{X}^{i,N,s,y}_t,\bar{\mu}^{N,s,y}_t) \partial_\mu \Psi(t,\bar{\mu}^{N,s,y}_t)[\bar{X}^{i,N,s,y}_t]|^2dt\right]\\ 
&+\E\left[\frac{1}{N}\sum_{i=1}^N\int_s^T [\partial_\mu\Psi]^\top(t,\bar{\mu}^{N,s,y}_t)[\bar{X}^{i,N,s,y}_t]\sigma(\bar{X}^{i,N,s,y}_t,\bar{\mu}^{N,s,y}_t) d\tilde{W}^i_t\right]\\ 
&+\E\left[\frac{1}{N^2}\sum_{i=1}^N\int_s^T \sigma\sigma^\top(\bar{X}^{i,N,s,y}_t,\bar{\mu}^{N,s,y}_t):\partial^2_\mu\Psi(t,\bar{\mu}^{N,s,y}_t)[\bar{X}^{i,N,s,y}_t,\bar{X}^{i,N,s,y}_t]dt\right],
\end{align*}
where in the first inequality we used (iii) from Definition \ref{defi:subsolution}.

$u^N_i(t)+\sigma^\top(\bar{X}^{i,N,s,y}_t,\bar{\mu}^{N,s,y}_t) \partial_\mu \Psi(t,\bar{\mu}^{N,s,y}_t)[\bar{X}^{i,N,s,y}_t],i=1,...,N$ is a valid choice of control in the prelimit representation \eqref{eq:prelimitrep} from Proposition \ref{prop:prelimitLDPexpression} since $\sigma$, $\partial_\mu\Psi$ are bounded, and taking $F(\mu)=G(\mu_T)$, $\tilde{b}=b$ therein (using $G\in C_b(\mc{P}(\R^d))$ and \ref{assumption:uncontrolleduniqueness}) we can continue:
\begin{align*}
&\E\left[\frac{1}{2N}\sum_{i=1}^N\int_s^T |u^N_i(t)|^2dt\right]+2\E\left[G(\bar{\mu}^{N,s,y}_T)\right]-\E\left[\frac{1}{N}\sum_{i=1}^N\int_s^T |\sigma^\top(\bar{X}^{i,N,s,y}_t,\bar{\mu}^{N,s,y}_t) \partial_\mu \Psi(t,\bar{\mu}^{N,s,y}_t)[\bar{X}^{i,N,s,y}_t]|^2dt\right]\\ 
&\geq  \Psi\left(s,\frac{1}{N}\sum_{i=1}^N \delta_{y_i}\right)+\E\left[\frac{1}{N}\sum_{i=1}^N\int_s^T [\partial_\mu\Psi]^\top(t,\bar{\mu}^{N,s,y}_t)[\bar{X}^{i,N,s,y}_t]\sigma(\bar{X}^{i,N,s,y}_t,\bar{\mu}^{N,s,y}_t) d\tilde{W}^i_t\right]\\ 
&+\E\left[\frac{1}{N^2}\sum_{i=1}^N\int_s^T \sigma\sigma^\top(\bar{X}^{i,N,s,y}_t,\bar{\mu}^{N,s,y}_t):\partial^2_\mu\Psi(t,\bar{\mu}^{N,s,y}_t)[\bar{X}^{i,N,s,y}_t,\bar{X}^{i,N,s,y}_t]dt\right]\\ 
&-\frac{1}{N}\log\E\left[\exp\left(-NG(\mu_T^{N,s,y})\right)\right]\\ 
&=\Psi\left(s,\frac{1}{N}\sum_{i=1}^N \delta_{y_i}\right)+\E\left[\frac{1}{N^2}\sum_{i=1}^N\int_s^T \sigma\sigma^\top(\bar{X}^{i,N,s,y}_t,\bar{\mu}^{N,s,y}_t):\partial^2_\mu\Psi(t,\bar{\mu}^{N,s,y}_t)[\bar{X}^{i,N,s,y}_t,\bar{X}^{i,N,s,y}_t]dt\right]\\ 
&-\frac{1}{N}\log\E\left[\exp\left(-NG(\mu_T^{N,s,y})\right)\right]
\end{align*}
since the martingale term is bounded in square expectation for all $N$.

Infimizing over $u^{N}\in \mc{U}^N$ and using \eqref{eq:variationalrepfordesiredterm} we get:
\begin{align*}
&\liminf_{N\toinf}-\frac{1}{N}\log \biggl(\E\biggl[\exp\left(-2NG(\hat{\mu}^{N,s,y}_T)\right)\prod_{i=1}^N (Z^{i,N,s,y})^2 \biggr]\biggr)\\ 
&\geq \liminf_{N\toinf}\biggl\lbrace\Psi\left(s,\frac{1}{N}\sum_{i=1}^N \delta_{y_i}\right)\\
&+\inf_{u^N\in\mc{U}^N}\E\left[\frac{1}{N^2}\sum_{i=1}^N\int_s^T \sigma\sigma^\top(\bar{X}^{i,N,s,y}_t,\bar{\mu}^{N,s,y}_t):\partial^2_\mu\Psi(t,\bar{\mu}^{N,s,y}_t)[\bar{X}^{i,N,s,y}_t,\bar{X}^{i,N,s,y}_t]dt\right]\\ 
&-\frac{1}{N}\log\E\left[\exp(-NG(\mu_T^{N,s,y}))\right]\biggr\rbrace\\ 
&=\Psi(s,\nu)+\gamma_1,
\end{align*}
where we used the assumed continuity of $\Psi$ and the convergence of $\frac{1}{N}\sum_{i=1}^N\delta_{y_i}$ to $\nu$, \eqref{eq:gammas}, and the fact that by boundedness of $\sigma$ and (iv) in Definition \ref{defi:subsolution}, there is a constant $C$ independent on the choice of $u^N\in\mc{U}^N$ (which may change from line to line) such that:
\begin{align*}
&\E\left[\frac{1}{N^2}\sum_{i=1}^N\int_s^T \sigma\sigma^\top(\bar{X}^{i,N,s,y}_t,\bar{\mu}^{N,s,y}_t):\partial^2_\mu\Psi(t,\bar{\mu}^{N,s,y}_t)[\bar{X}^{i,N,s,y}_t,\bar{X}^{i,N,s,y}_t]dt\right]\\ 
&\leq C\E\left[\frac{1}{N^2}\sum_{i=1}^N\int_s^T |\partial^2_\mu\Psi(t,\bar{\mu}^{N,s,y}_t)[\bar{X}^{i,N,s,y}_t,\bar{X}^{i,N,s,y}_t]|dt\right]\\ 
&= C\E\left[\frac{1}{N^2}\sum_{i=1}^N\sum_{j=1}^N\int_s^T |\partial^2_\mu\Psi(t,\bar{\mu}^{N,s,y}_t)[\bar{X}^{i,N,s,y}_t,\bar{X}^{j,N,s,y}_t]|\1_{i=j}dt\right]\\ 
&\leq C\E\left[\int_s^T \biggl({\frac{1}{N^2}}\sum_{i,j=1}^N|\partial^2_\mu\Psi(t,\bar{\mu}^{N,s,y}_t)[\bar{X}^{i,N,s,y}_t,\bar{X}^{j,N,s,y}_t]|^2\biggr)^{1/2}\biggl({\frac{1}{N^2}}\sum_{i,j=1}^N\1_{i=j}\biggr)^{1/2}dt\right]\\ 
&= C\E\left[\frac{1}{N^{1/2}}\int_s^T \biggl(\frac{1}{N^2}\sum_{i,j=1}^N|\partial_\mu\Psi(t,\bar{\mu}^{N,s,y}_t)[\bar{X}^{i,N,s,y}_t,\bar{X}^{j,N,s,y}_t]|^2\biggr)^{1/2}dt\right]\\ 
&= C\E\left[\frac{1}{N^{1/2}}\int_s^T \norm{\partial^2_\mu\Psi(t,\bar{\mu}^{N,s,y}_t)}_{L^2(\R^d;\bar{\mu}^{N,s,y}_t)\otimes L^2(\R^d;\bar{\mu}^{N,s,y}_t)}dt\right]\\ 
&\leq \frac{C}{N^{1/2}}(T-s).
\end{align*}
Note that if we, instead of just the bound $\sup_{t\in[0,T],\mu\in \mc{P}_2(\R^d)}\norm{\partial^2_\mu \Psi(t,\mu)[\cdot,\cdot]}_{L^2(\R^d;\mu)\otimes L^2(\R^d;\mu)}\leq C$, have further that e.g. $\sup_{t\in[0,T],\mu\in \mc{P}_2(\R^d)}\int_{\R^d}|\partial^2_\mu \Psi(t,\mu)[z,z]|^2\mu(dz)\leq C$, then this remainder is $O(1/N)$.
%Since $\Psi$ is a subsolution to \eqref{eq:HJBequation}, we know via Equation \eqref{eq:desiredsubsolutionproperty} that
%\begin{align*}
%\Phi(0,\nu)&\leq \inf_{\mu \in C([0,T];\mc{P}(\R))}\br{I(\mu)+G(\mu_T)},
%\end{align*}
{Also note that the Laplace Principle of Theorem \ref{theo:LDP} is used here to establish the equality \eqref{eq:gammas}, which yields the appearance of $\gamma_1$ in the limit.}

This gives that 
\begin{align*}
&\lim_{N\toinf}-\frac{1}{N}\log R(\hat{\delta}_N)\\ 
&=\lim_{N\toinf}\biggl\lbrace-\frac{1}{N}\log\E\biggl[\exp(-2NG(\hat{\mu}^{N,s,y}))\prod_{i=1}^N (Z^{i,N,s,y})^2 \biggr]+\frac{2}{N}\log\E\biggl[\exp(-NG(\mu^{N,s,y})) \biggr] \biggr\rbrace\\ 
& = \lim_{N\toinf}-\frac{1}{N}\log\E\biggl[\exp(-2NG(\hat{\mu}^{N,s,y}))\prod_{i=1}^N (Z^{i,N,s,y})^2 \biggr]-2\gamma_1 \quad\text{ (by \eqref{eq:gammas})}\\ 
&\geq \Psi(s,\nu)-\gamma_1.
\end{align*}
{We thus have that $\hat{\delta}_N$ admits an expansion of the form \eqref{eq:expandimportancesamplingrelativeerror} with $\hat{\gamma}\leq \gamma_1-\Psi(s,\nu)$}. Note that the case $\Psi\equiv 0$ corresponds to the standard Monte Carlo estimator $\delta_N$ in the above, from which we obtain $\gamma =2\gamma_1-\gamma_2\leq \gamma_1$ as expected -- {see \eqref{eq:expandmontecarlorelativeerror} and the discussion thereafter.} The rest of the claims stated in the theorem are now immediate. {In particular, by Jensen's inequality we know $-\frac{1}{N}\log R(\hat{\delta}_N)\leq 0$, so if $\Psi(s,\nu)=\gamma_1$ we have log-efficiency in the sense of Definition \ref{definition:logefficiency}.

%{Possibly worth noting: This also implicitly proved a comparison principle for \eqref{eq:HJBequation}.}

%{IT SEEMS VIA ESSENTIALLY THE SAME COMPUTATION, WE CAN GET $\gamma_2\geq \Psi(s,\nu)+\gamma_1$. But we know $\gamma_2\leq 2\gamma_1$, so this seems to imply if we can take $\Psi(s,\nu)=\gamma_1$, that $\gamma_2=2\gamma_1$ and we already have log optimality in the standard Monte Carlo regime. Something must be broken. In our first toy example, $\gamma_2=-2\sigma^2T,\gamma_1=-\sigma^2T/2$, $\Psi(0,\nu)=-\frac{1}{2}\sigma^2T$. 
%UPDATE: The proof I had in mind doesn't work, thankfully. The key element of the above when we use $u^N_i(t)+\sigma^\top(\bar{X}^{i,N,s,y}_t,\bar{\mu}^{N,s,y}_t) \partial_\mu \Psi(t,\bar{\mu}^{N,s,y}_t)[\bar{X}^{i,N,s,y}_t],i=1,...,N$ is a valid choice of control in the prelimit representation \eqref{eq:prelimitrep} and take $F(\mu)=G(\mu_T)$, $\tilde{b}=b$ to bound below by what becomes $\gamma_1$ in the limit, this control is also the one which enters $G(\bar{\mu}^N_T)$. This is not the case for the monte carlo scheme (the control which enters is just $\bar{u}$)}%
%
%

%{Note that also, the above analysis with $v^N_i=0$ corresponds to standard Monte Carlo. HOW TO SEE THAT IT'S BETTER THAN MONTE CARLO? Would seem to require knowing $\Phi(s,\nu)\geq \gamma_2-\gamma_1$}

\subsection{Proof of Theorem \ref{theo:expansionanalysis}}\label{subsec:theo2proof}
First, we will establish a prelimit PDE expression for the numerator in the expression for $R(\hat\delta_N)$ in \eqref{eq:Rdeltahat} in a manner along the lines of the discussion in Section \ref{sec:ontheHJBEquation}.
Define
\begin{align*}
\hat{\Xi}^N(t,x_1,...,x_N)=\E\biggl[\exp\left(-2NG(\hat{\mu}^{N,t,x}_T)\right)\prod_{i=1}^N (Z^{i,N,t,x})^2 \biggr],\quad x_1,...,x_N\in\R^d,t\in[0,T]
{}\end{align*}
where $Z^{i,N,t,x},i=1,...,N$ and $\hat{\mu}^{N,t,x}$ are as in \eqref{eq:controlledempiricalmeasure} and \eqref{eq:Zs} with the choice of controls from \eqref{eq:optimalcontrol} in Theorem \ref{theo:logefficient}. Define $v(t,\mu,x)\coloneqq -\sigma^\top(x,\mu)\partial_\mu\phi_0(t,\mu)[x]$. Then $v(t,\mu^N_x,x_i)=v_i^N(t,x_1,...,x_N)$ for all $N$, where $v_i^N$ are from \eqref{eq:optimalcontrol} with $\phi_0$ in the place of $\Psi$. Recall that in \eqref{eq:optimalcontrol} we had only assumed $\Psi$ be a classical subsolution of \eqref{eq:HJBequation}, whereas now we assume $\phi_0$ is a classical solution.

From Lemma \ref{lemma:RhatnumeratorGirsanov} (using the assumed boundedness of $\sigma$ and $\partial_\mu \phi_0$), we have:
\begin{align}\label{eq:hatXirepresentation}
\hat{\Xi}^N(t,x_1,...,x_N)&= \E\biggl[\exp(-2NG(\tilde{\mu}^{N,t,x}_T))\exp\biggl(N\int_t^T \int_{\R^d}|v(\tau,\tilde{\mu}^{N,t,x}_\tau,z)|^2\tilde{\mu}^{N,t,x}_\tau(dz)d\tau\biggr)\biggr]
\end{align}
where $\tilde{\mu}^{N,t,x}_\tau = \frac{1}{N}\sum_{i=1}^N \delta_{\tilde{X}^{i,N,t,x}_\tau}$, 
\begin{align*}
d\tilde{X}^{i,N,t,x}_\tau &= [b(\tilde{X}^{i,N,t,x}_\tau,\tilde{\mu}^{N,t,x}_\tau)- \sigma(\tilde{X}^{i,N,t,x}_\tau,\tilde{\mu}^{N,t,x}_\tau) v(\tau,\tilde{\mu}^{N,t,x}_\tau,\tilde{X}^{i,N,t,x}_\tau)]d\tau + \sigma(\tilde{X}^{i,N,t,x}_\tau,\tilde{\mu}^{N,t,x}_\tau) d\tilde{W}^i_\tau,\\ 
\tilde{X}^{i,N,t,x}_t=x_i.
\end{align*}

Applying Feynman-Kac, we have $\hat{\Xi}^N$ is the unique solution to:
\begin{equation}\label{eq:prelimitFK}
  \begin{split}
\partial_t\hat{\Xi}^N(t,x_1,...,x_N)&+\sum_{i=1}^N\biggl\lbrace [b(x_i,\mu^N_x)-\sigma(x_i,\mu^N_x) v(t,\mu^N_x,x_i)]\cdot \partial_{x_i}\hat{\Xi}^N(t,x_1,...,x_N) \\ 
&+\frac{1}{2}\sigma\sigma^\top(x_i,\mu^N_x):\partial^2_{x_i} \hat{\Xi}^N(t,x_1,...,x_N)+ |v(t,\mu^N_x,x_i)|^2\hat{\Xi}^N(t,x_1,...,x_N)\biggr\rbrace=0,\\ 
&\hspace{6cm}t\in [0,T),x_1,...,x_N\in \R^{d},\\ 
\hat{\Xi}^N(T,x_1,...,x_N)&=\exp(-2NG(\mu^N_x)),\qquad x_1,...,x_N\in \R^{d}.
\end{split}
\end{equation}
This is where the linear growth condition on $b$ is used---see, e.g., Theorem 7.6 in \cite{KS}.

Then $\tilde{\Xi}^N(t,x_1,...,x_N)\coloneqq-\frac{1}{N}\log \hat{\Xi}^N(t,x_1,...,x_N)$ satisfies
\begin{align*}%\label{eq:tildeXi}
\partial_t\tilde{\Xi}^N(t,x_1,...,x_N)&+\sum_{i=1}^N\biggl\lbrace [b(x_i,\mu^N_x)-\sigma(x_i,\mu^N_x) v(t,\mu^N_x,x_i)]\cdot \partial_{x_i}\tilde{\Xi}^N(t,x_1,...,x_N) \\ 
&+\frac{1}{2}\sigma\sigma^\top(x_i,\mu^N_x):\partial^2_{x_i} \tilde{\Xi}^N(t,x_1,...,x_N)-\frac{1}{N} |v(t,\mu^N_x,x_i)|^2\nonumber\\ 
&-\frac{N}{2}|\sigma^\top(x_i,\mu^N_x)\partial_{x_i} \tilde{\Xi}^N(t,x_1,...,x_N)|^2\biggr\rbrace =0,t\in [0,T),x_1,...,x_N\in \R^{d},\nonumber\\ 
\tilde{\Xi}^N(T,x_1,...,x_N)&=2G(\mu^N_x),\qquad x_1,...,x_N\in \R^{d}.\nonumber
\end{align*}

We consider now the PDE \eqref{eq:kostas3.3} from Subsection \ref{subsec:mainresultsstatements}. Note that this agrees with \eqref{eq:prelimitHJB} if we set $v\equiv 0$, other than the fact that the terminal condition is $2G$ rather than $G$. Under the current regularity assumptions, we see via Proposition \ref{prop:empprojderivatives} that for all $N$, $\tilde{\Xi}^N$ is the empirical projection of $\Xi^N$ from \eqref{eq:kostas3.3}. That is: 
\begin{align}\label{eq:XiNrep}
\Xi^N(t,\mu^N_x)=\tilde{\Xi}^N(t,x_1,...,x_N)=-\frac{1}{N}\log\E\biggl[\exp(-2NG(\hat{\mu}^{N,t,x}_T))\prod_{i=1}^N (Z^{i,N,t,x})^2 \biggr].
\end{align}

We now observe how to uncover the log-efficiency established in Theorem \ref{theo:logefficient} (though under stronger regularity assumptions) using the method of \cite{NonAsymptotic}, and set up how higher order terms in such an expansion can be used to prove the results in Theorem \ref{theo:expansionanalysis}.

Suppose, as in \cite{NonAsymptotic}, that we have sufficient regularity to have the asymptotic expansions for  \eqref{eq:prelimitHJB} and \eqref{eq:kostas3.3} of the forms \eqref{eq:PhiNexpansion} and \eqref{eq:XiNexpansion}  for some $k$.%Note that, while in \cite{NonAsymptotic} such expansions can be proved to be valid under certain conditions as a consequence of Theorem 5.1 in \cite{FJ}, we have no such machinery available for our infinite-dimensional HJB Equations, and hence simply assume that they hold.

Then by \eqref{eq:hatXirepresentation} and \eqref{eq:hatPhirepresentation} (noting that indeed the representation \eqref{eq:hatPhirepresentation} holds for the unique solution $\hat{\Phi}^N$ to \eqref{eq:prelimitBKE}, again by Theorem 7.6 in \cite{KS}), we have %{Z: Maybe give this expression an equation label? Or even move to the statement of main results section?}
\begin{align*}
R(\hat{\delta}_N)&=\frac{\hat{\Xi}^N(s,y_1,...,y_N)}{[\hat{\Phi}^N(s,y_1,...,y_N)]^2}\\
&=\exp\biggl(N[2\phi_0(s,\mu^N_y)-\xi_0(s,\mu^N_y)]+[2\phi_1(s,\mu^N_y)-\xi_1(s,\mu^N_y)]+[2\phi_2(s,\mu^N_y)-\xi_2(s,\mu^N_y)]/N+...\\ 
&\hspace{6cm}+[2\phi_k(s,\mu^N_y)-\xi_k(s,\mu^N_y)]/N^{k-1}+o(1/N^{k-1})\biggr),
\end{align*}
where $R(\hat{\delta}_N)$ is as in \eqref{eq:Rdeltahat}. Here we have used, by the same logic as to obtain \eqref{eq:XiNrep}, 
\begin{align*}
\Phi^N(t,\mu^N_x)=\tilde{\Phi}^N(t,x_1,...,x_N)=-\frac{1}{N}\log\E[\exp(-NG(\mu^{N,t,x}_T))],
\end{align*}
where $\Phi^N$ is as in \eqref{eq:prelimitHJB} and $\tilde{\Phi}^N$ is as in \eqref{eq:tildePhiN}.

We now see that $2\phi_0$ (recalling here that $\phi_0$ is the unique classical solution to \eqref{eq:HJBequation}) and $\xi_0$ {from \eqref{eq:xi0}} satisfy the same equation. Indeed, inserting this ansatz into \eqref{eq:xi0} and using $v(t,\mu,x)= -\sigma^\top(x,\mu)\partial_\mu\phi_0(t,\mu)[x]$, we get 
\begin{align*}
2\partial_t\phi_0(t,\mu)&+2\int_{\R^d}[b(z,\mu)+\sigma \sigma^\top(z,\mu)\partial_\mu\phi_0(t,\mu)[z]]\cdot \partial_\mu \phi_0(t,\mu)[z]+\frac{1}{2}\sigma\sigma^\top(z,\mu):\partial_z\partial_\mu \phi_0(t,\mu)[z]\mu(dz)\\ 
&-\int_{\R^d}2|\sigma^\top(z,\mu) \partial_\mu \phi_0(t,\mu)[z]|^2+|\sigma^\top(z,\mu)\partial_\mu\phi_0(t,\mu)[z]|^2\mu(dz)\\ 
&=2\partial_t\phi_0(t,\mu)+2\int_{\R^d}b(z,\mu)\cdot \partial_\mu \phi_0(t,\mu)[z]+\frac{1}{2}\sigma\sigma^\top(z,\mu):\partial_z\partial_\mu \phi_0(t,\mu)[z]\mu(dz)\nonumber\\ 
&-\int_{\R^d}|\sigma^\top(z,\mu)\partial_\mu\phi_0(t,\mu)[z]|^2\mu(dz)=0,\qquad t\in [0,T),\mu \in \mc{P}_2(\R^d),\nonumber\\ 
2\phi_0(T,\mu)&=2G(\mu),\qquad \mu \in \mc{P}_2(\R^d).\nonumber
\end{align*}
which dividing by $2$ gives the same equation as \eqref{eq:HJBequation}. 

This shows that indeed $2\phi_0(t,\mu)=\xi_0(t,\mu)$ for all $t\in [0,T],\mu\in \mc{P}_2(\R^d)$ via our uniqueness assumption. 

Thus, since $2\phi_0(s,\mu^N_y)=\xi_0(s,\mu^N_y)$ for all $N$, we have $-\frac{1}{N}\log R(\hat{\delta}_N)\tto 0$ as $N\toinf$. 

Moreover, when $k=1$, knowing $2\phi_0(s,\mu^N_y)=\xi_0(s,\mu^N_y)$, we have for any $M$ 
\begin{align*}
\lim_{N\toinf}\rho(\hat{\delta}_N)=\frac{1}{\sqrt{M}}\sqrt{\lim_{N\toinf}\exp(2\phi_1(s,\mu^N_y)-\xi_1(s,\mu^N_y)+o(1))-1}.
\end{align*}
As we will show, under our current assumptions $2\phi_1(s,\mu^N_y)=\xi_1(s,\mu^N_y)$ for all $N$, so we get $\lim_{N\toinf}\rho(\hat{\delta}_N)=0${, yielding vanishing relative error (Definition \ref{def:vanishingrelativeerror})}.

Lastly, knowing $2\phi_0(s,\mu^N_y)=\xi_0(s,\mu^N_y)$ and  $2\phi_1(s,\mu^N_y)=\xi_1(s,\mu^N_y)$, letting $k=2$ we have:
\begin{align*}
\lim_{N\toinf}T(N)&=\lim_{N\toinf}N[\exp([2\phi_2(s,\mu^N_y)-\xi_2(s,\mu^N_y)]/N+o(1/N))-1]\\ 
&= \lim_{N\toinf}[2\phi_2(s,\mu^N_y)-\xi_2(s,\mu^N_y)]+o(1)\\ 
& = \lim_{N\toinf}[2\phi_2(s,\mu^N_y)-\xi_2(s,\mu^N_y)]\\ 
& = 2\phi_2(s,\nu)-\xi_2(s,\nu)
\end{align*}
where $T(N)$ is as in \eqref{eq:totalnumberofparticles} and here we have used the assumed continuity of $\phi_2,\xi_2$. This will yield the limit \eqref{eq:TNlimit}.

We now insert the ansatz of the expansions \eqref{eq:PhiNexpansion} for {$\Phi^N(t,\mu)$} and \eqref{eq:XiNexpansion} for $\Xi^N(t,\mu)$  into their respective {equations \eqref{eq:prelimitHJB} and \eqref{eq:kostas3.3} } and match $\mc{O}(1/N^k)$ to obtain formal recursive formulas for $\xi_k$ and $\phi_k$ as in Theorem 3.3 in \cite{NonAsymptotic}. {We arrive at the expressions \eqref{eq:phik} and \eqref{eq:xik} from Subsection \ref{subsec:mainresultsstatements}.}
%Rearranging, we get 
%\begin{align*}
%\frac{\partial}{\partial t\xi_k(t,\mu)&+\int_{\R^d}[b(z,\mu)-\sigma v(t,\mu,z)-\sigma\sigma^\top \partial_\mu \xi_0(t,\mu)[z]]\cdot \partial_\mu \xi_k(t,\mu)[z]+\frac{1}{2}\sigma\sigma^\top:\partial_z\partial_\mu \xi_k(t,\mu)[z]\mu(dz)\\ 
%&=\int_{\R^d}\sum_{i+j=k,0\leq i<j<k}\langle \sigma^\top \partial_\mu \xi_i(t,\mu)[z],\sigma^\top \partial_\mu \xi_j(t,\mu)[z]\rangle+\frac{1}{2}|\sigma^\top\partial_\mu\xi_{k/2}(t,\mu)[z]|^2\1_{k/2\in\bb{N}}\mu(dz)\\ 
%&-\int_{\R^d}\frac{1}{2}\sigma\sigma^\top:\partial^2_\mu \xi_{k-1}(t,\mu)[z,z]\mu(dz)\nonumber\\ 
%\xi_k(T,\mu)&=0.\nonumber
%\end{align*}
%and 
%\begin{align*}
%\frac{\partial}{\partial t}\phi_k(t,\mu)&+\int_{\R^d}[b(z,\mu)-\sigma\sigma^\top \partial_\mu \phi_0(t,\mu)[z]]\cdot \partial_\mu \phi_k(t,\mu)[z]+\frac{1}{2}\sigma\sigma^\top:\partial_z\partial_\mu \phi_k(t,\mu)[z]\mu(dz)\\ 
%&=\int_{\R^d}\sum_{i+j=k,0\leq i<j<k}\langle \sigma^\top \partial_\mu \phi_i(t,\mu)[z],\sigma^\top \partial_\mu \phi_j(t,\mu)[z]\rangle+\frac{1}{2}|\sigma^\top\partial_\mu\phi_{k/2}(t,\mu)[z]|^2\1_{k/2\in\bb{N}}\mu(dz)\\ 
%&-\int_{\R^d}\frac{1}{2}\sigma\sigma^\top:\partial^2_\mu \phi_{k-1}(t,\mu)[z,z]\mu(dz)\nonumber\\ 
%\phi_k(T,\mu)&=0.\nonumber
%\end{align*}

Now recalling $2\phi_0(t,\mu)=\xi_0(t,\mu)$, we see that {in \eqref{eq:phikstochrep} and \eqref{eq:xikstochrep} from Subsection \ref{subsec:mainresultsstatements},} $Y^{t,\mu}\overset{d}{=}Z^{t,\mu}$ (using \ref{assumption:weaksenseuniqueness}), and moreover we note that $Z^{t,\mu}$ is the solution to the (optimally) controlled limiting McKean-Vlasov equation initialized at time $t$ with distribution $\mu$ from Theorem \ref{theo:LDP}, that is $Z^{t,\mu}\overset{d}{=}\hat{X}^{u,t,\mu}$ from \eqref{eq:controlledMcKeanVlasov} with\\ 
$u(\tau)=-\sigma(\hat{X}^{u,t,\nu}_\tau,\mc{L}(\hat{X}^{u,t,\nu}_\tau))\partial_\mu \phi_0(\tau,\mc{L}(\hat{X}^{u,t,\nu}_\tau))[\hat{X}^{u,t,\nu}_\tau],{\tau\in [t,T]}$.

Under the assumption that the {expressions \eqref{eq:phikstochrep},\eqref{eq:xikstochrep}} are valid with $k=1$, we have: 
\begin{align*}%\label{eq:phi1rep}
\phi_1(t,\mu)&=\frac{1}{2}\E\biggl[\int_t^T\sigma\sigma^\top(Z^{t,\mu}_\tau,\mc{L}(Z^{t,\mu}_\tau)): \partial^2_\mu\phi_{0}(\tau,\mc{L}(Z^{t,\mu}_\tau))[Z^{t,\mu}_\tau,Z^{t,\mu}_\tau]d\tau\biggr]
\end{align*}
and
\begin{align*}%\label{eq:xi1rep}
\xi_1(t,\mu)&=\E\biggl[\int_t^T\frac{1}{2}\sigma\sigma^\top(Z^{t,\mu}_\tau,\mc{L}(Z^{t,\mu}_\tau)): \partial^2_\mu\xi_{0}(\tau,\mc{L}(Z^{t,\mu}_\tau))[Z^{t,\mu}_\tau,Z^{t,\mu}_\tau]d\tau\biggr]\\ 
&=\E\biggl[\int_t^T\sigma\sigma^\top(Z^{t,\mu}_\tau,\mc{L}(Z^{t,\mu}_\tau)): \partial^2_\mu\phi_{0}(\tau,\mc{L}(Z^{t,\mu}_\tau))[Z^{t,\mu}_\tau,Z^{t,\mu}_\tau]d\tau\biggr],\nonumber 
\end{align*}

so indeed $\xi_1(t,\mu)=2\phi_1(t,\mu)$ for all $t\in [0,T]$ and $\mu\in \mc{P}_2(\R^d)$. As discussed previously, this establishes that we indeed have vanishing relative error (Definition \ref{def:vanishingrelativeerror}). Also, this is the expected expression to arrive at for these first-order correction terms---compare with equations (A.7) and (A.8) in \cite{VW}, where the analogous expression is given in terms of the second derivative of their zero-viscosity (standard) HJB equation evaluated at the optimally controlled path for the limiting ODE in the small-noise regime.

Under the additional assumption that our representations {\eqref{eq:phikstochrep},\eqref{eq:xikstochrep}} for $\phi_k,\xi_k$ hold for the next order correction $k=2$, we have:
\begin{align*}%\label{eq:phi2rep}
\phi_2(t,\mu)&=\frac{1}{2}\E\biggl[\int_t^T\sigma\sigma^\top(Z^{t,\mu}_\tau,\mc{L}(Z^{t,\mu}_\tau)): \partial^2_\mu\phi_{1}(\tau,\mc{L}(Z^{t,\mu}_\tau))[Z^{t,\mu}_\tau,Z^{t,\mu}_\tau]\\ 
&\hspace{4cm}-|\sigma^\top(Z^{t,\mu}_\tau,\mc{L}(Z^{t,\mu}_\tau)) \partial_\mu \phi_1(\tau,\mc{L}(Z^{t,\mu}_\tau))[Z^{t,\mu}_\tau]|^2d\tau\biggr]\nonumber
\end{align*}
and
\begin{align*}%\label{eq:xi2rep}
\xi_2(t,\mu)&=\E\biggl[\int_t^T\frac{1}{2}\sigma\sigma^\top(Z^{t,\mu}_\tau,\mc{L}(Z^{t,\mu}_\tau)): \partial^2_\mu\xi_{1}(\tau,\mc{L}(Z^{t,\mu}_\tau))[Z^{t,\mu}_\tau,Z^{t,\mu}_\tau]\\ 
&\hspace{4cm}-\frac{1}{2}|\sigma^\top(Z^{t,\mu}_\tau,\mc{L}(Z^{t,\mu}_\tau)) \partial_\mu \xi_1(\tau,\mc{L}(Z^{t,\mu}_\tau))[Z^{t,\mu}_\tau]|^2d\tau\biggr]\nonumber\\ 
&=\E\biggl[\int_t^T\sigma\sigma^\top(Z^{t,\mu}_\tau,\mc{L}(Z^{t,\mu}_\tau)): \partial^2_\mu\phi_{1}(\tau,\mc{L}(Z^{t,\mu}_\tau))[Z^{t,\mu}_\tau,Z^{t,\mu}_\tau]d\tau\nonumber\\ 
&\hspace{4cm}-2|\sigma^\top(Z^{t,\mu}_\tau,\mc{L}(Z^{t,\mu}_\tau)) \partial_\mu \phi_1(\tau,\mc{L}(Z^{t,\mu}_\tau))[Z^{t,\mu}_\tau]|^2d\tau\biggr],\nonumber 
\end{align*}

so 
\begin{align*}
2\phi_2(t,\mu)-\xi_2(t,\mu)=\E\biggl[\int_t^T|\sigma^\top(Z^{t,\mu}_\tau,\mc{L}(Z^{t,\mu}_\tau)) \partial_\mu \phi_1(\tau,\mc{L}(Z^{t,\mu}_\tau))[Z^{t,\mu}_\tau]|^2d\tau\biggr].
\end{align*}

By our previous discussion and our identification of $Z^{t,\mu}$ as related to $\hat{X}^{u,t,\mu}$, we get 
\begin{align*}
\lim_{N\toinf}T(N)=\E\biggl[\int_s^T|\sigma^\top(\hat{X}^{u,s,\nu}_t,\mc{L}(\hat{X}^{u,s,\nu}_t)) \partial_\mu \phi_1(t,\mc{L}(\hat{X}^{u,s,\nu}_t))[\hat{X}^{u,s,\nu}_t]|^2dt\biggr]
\end{align*}
where $\phi_1$ is as in \eqref{eq:phik} with $k=1$ and $u(t)=-\sigma^\top(\hat{X}^{u,s,\nu}_t,\mc{L}(\hat{X}^{u,s,\nu}_t))\partial_\mu \phi_0(t,\mc{L}(\hat{X}^{u,s,\nu}_t))[\hat{X}^{u,s,\nu}_t]{,t\in[s,T]}$.

\section{Conclusions and Future Work}\label{sec:conclusions}
We have derived an importance sampling scheme for exponential functionals of the empirical measure of weakly interacting diffusions. Using the connection between the large deviations rate function {of \cite{BDF}} and mean-field optimal control, the asymptotic performance of the proposed scheme is characterized in terms of subsolutions of the Hamilton-Jacobi-Bellman equation on Wasserstein space. We provide both numerical and analytical evidence that sufficient smoothness of such a solution can yield relative error which vanishes as {the number of particles} becomes large. We also numerically explore the impact of lack of smoothness of the solution of the HJB equation on the performance of the proposed importance sampling scheme.

In future work, a major hurdle to overcome will be adapting the scheme to situations where subsolutions of the HJB equation cannot be constructed analytically. In the standard small noise setting, this is also a major issue, and when the dimension of the system becomes large advanced techniques such as the use of machine learning, optimization software, and neural networks to identify the solution of the HJB equation and/or the optimal control are used \cites{BQRS,NR,TS,VW}. In the mean-field setting, methods of stochastic control are already being used to address the issue of numerically constructing solutions to the HJB equation \eqref{eq:HJBequation}, see e.g. \cites{GMW,GLPW,CCD_Numerical,FZ,HHL,CL,Lauriere}. Moreover, using the calculus of variations form of the rate function of Dawson-G\"artner \cite{DG}, there are some examples in the literature where perturbation expansions have been made to approximate the optimal path of the controlled McKean-Vlasov equation corresponding to certain types of rare events \cites{Garnier1,BGN}. The marriage of such techniques with our proposed scheme would allow for applications beyond the linear quadratic regime and perturbations thereof, and perhaps even allow for extensions to finite-time probabilities and problems of metastability as discussed in Remark \ref{remark:extensions}. 

It may also prove useful for some target statistics to design an importance sampling scheme using the moderate deviations principle for the empirical measure \cites{BW,BS_MDP2022}. This is known to aid with the problems discussed above in the small noise setting due to the linearization of the HJB equation under the moderate deviations scaling \cites{GSS_ImportanceSampling,MSImportanceSampling}. As discussed in Remark \ref{remark:IIDandlinearG}, this can likely also be supplemented via use of an importance sampling scheme arising from large deviations of the empirical measure in the joint small noise and large $N$ limit as derived in \cites{BCcurrents,Orrieri}.

Another interesting avenue for future research is to see how our methodology can be extended to the setting where the interacting particles also share a common driving noise \cites{BCsmallnoise,DLR}.

Lastly, it is of great interest to establish rigorously the asymptotic expansions for the prelimit HJB equation presumed in Theorem \ref{theo:expansionanalysis}---see Remark \ref{remark:ontheassumptionsforexpansionanalysis}.

\appendix
\section{Differentiation on Spaces of Measures}\label{appendix:P2differentiation}

\begin{defi}
\label{def:lionderivative}
Given a function $u:\mc{P}_2(\R^d)\tto \R$, we may define a lifting of $u$ to $\tilde{u}:L^2(\tilde\W,\tilde\F,\tilde\Prob;\R^d)\tto \R$ via $\tilde u (X) = u(\mc{L}(X))$ for $X\in L^2(\tilde\W,\tilde\F,\tilde\Prob;\R^d)$. Here we assume $\tilde\W$ is a Polish space, $\tilde\F$ its Borel $\sigma$-field, and $\tilde\Prob$ is an atomless probability measure (since $\tilde\W$ is Polish, this is equivalent to every singleton having zero measure).

Here:
\begin{align*}
\mc{P}_2(\R^d) \coloneqq \br{ \mu\in \mc{P}(\R^d): \int_{\R^d}|x|^2 \mu(dx)<\infty}.
\end{align*}
$\mc{P}_2(\R^d)$ is a Polish space under the $L^2$-Wasserstein distance
\begin{align*}
\bb{W}_2 (\mu_1,\mu_2)\coloneqq \inf_{\pi \in\mc{C}_{\mu_1,\mu_2}} \biggl[\int_{\R^d\times\R^d} |x-y|^2 \pi(dx,dy)\biggr]^{1/2},
\end{align*}
where $\mc{C}_{\mu_1,\mu_2}$ denotes the set of all {probability measures on $\R^{d}\times\R^d$ with first marginal $\mu_1$ and second marginal $\mu_2$}.

We say $u$ is \textbf{L-differentiable} or \textbf{Lions-differentiable} at $\mu_0\in\mc{P}_2(\R^d)$ if there exists a random variable $X_0$ on some $(\tilde\W,\tilde\F,\tilde\Prob)$ satisfying the above assumptions such that $\mc{L}(X_0)=\mu_0$ and $\tilde u$ is Fr\'echet differentiable at $X_0$.

The Fr\'echet derivative of $\tilde u$ can be viewed as an element of $L^2(\tilde\W,\tilde\F,\tilde\Prob;\R^d)$ by identifying $L^2(\tilde\W,\tilde\F,\tilde\Prob;\R^d)$ and its dual. From this, one can find that if $u$ is L-differentiable at $\mu_0\in\mc{P}_2(\R^d)$, there is a deterministic measurable function $\xi: \R^d\tto \R^d$ such that $D\tilde{u}(X_0)=\xi(X_0)$, and that $\xi$ is uniquely defined $\mu_0$-almost everywhere on $\R^d$. We denote this equivalence class of $\xi\in L^2(\R^d,\mu_0;\R^d)$ by $\partial_\mu u(\mu_0)$ and call $\partial_\mu u(\mu_0)[\cdot]:\R^d\tto \R^d$ the \textbf{Lions derivative} of $u$ at $\mu_0$. Note that this definition is independent of the choice of $X_0$ and $(\tilde\W,\tilde\F,\tilde\Prob)$. See \cite{CD} Section 5.2.

To avoid confusion when $u$ depends on more variables than just $\mu$, if $\partial_\mu u(\mu_0)$ is differentiable at $z_0\in\R^d$, we denote its derivative at $z_0$ by $\partial_z\partial_\mu u(\mu_0)[z_0]$.
\end{defi}
\begin{defi}
\label{def:fullyC2}
(\cite{CD} Definition 5.83) We say $u:\mc{P}_2(\R^d)\tto \R$ is \textbf{Fully} $\mathbf{C^2}$ if the following conditions are satisfied:
\begin{enumerate}
\item $u$ is $C^1$ in the sense of L-differentiation, and its first derivative has a jointly continuous version $\mc{P}_2(\R^d)\times \R^d\ni (\mu,z)\mapsto \partial_\mu u(\mu)[z]\in\R^d$.
\item For each fixed $\mu\in\mc{P}_2(\R^d)$, the version of $\R^d\ni z\mapsto \partial_\mu u(\mu)[z]\in\R^d$ from the first condition is differentiable on $\R^d$ in the classical sense and its derivative is given by a jointly continuous function $\mc{P}_2(\R^d)\times \R^d\ni (\mu,z)\mapsto \partial_z\partial_\mu u(\mu)[z]\in\R^{d\times d}$.
\item For each fixed $z\in \R^d$, the version of $\mc{P}_2(\R^d)\ni \mu\mapsto \partial_\mu u(\mu)[z]\in \R^d$ in the first condition is continuously L-differentiable component-by-component, with a derivative given by a function $\mc{P}_2(\R^d)\times \R^d\times \R^d\ni(\mu,z,z')\mapsto \partial^2_\mu u(\mu)[z][z']\in\R^{d\times d}$ such that for any $\mu\in\mc{P}_2(\R^d)$ and $X\in L^2(\tilde\W,\tilde\F,\tilde\Prob;\R^d)$ with $\mc{L}(X)=\mu$, $\partial^2_{\mu}u(\mu)[z][X]$ gives the Fr\'echet derivative at $X$ of $L^2(\tilde\W,\tilde\F,\tilde\Prob;\R^d)\ni X'\mapsto \partial_\mu u(\mc{L}(X'))[z]$ for every $z\in\R^d$. Denoting $\partial^2_\mu u(\mu)[z][z']$ by $\partial^2_\mu u(\mu)[z,z']$, the map $\mc{P}_2(\R^d)\times \R^d\times \R^d\ni(\mu,z,z')\mapsto \partial^2_\mu u(\mu)[z,z']$ is also assumed to be continuous in the product topology.
\end{enumerate}
\end{defi}

We recall now a useful connection between the Lions derivative as defined in \ref{def:lionderivative} and the empirical measure.
\begin{proposition}\label{prop:empprojderivatives}
For $g:\mc{P}_2(\R^d)\tto \R$ which is fully $C^2$ in the sense of definition \ref{def:fullyC2}, we can define the empirical projection of $g$, as $g^N: (\R^d)^N\tto \R$ given by
\begin{align*}
g^N(x_1,...,x_N)\coloneqq g\left(\frac{1}{N}\sum_{i=1}^N \delta_{x_i}\right).
\end{align*}

Then $g^N$ is twice differentiable on $(\R^d)^N$, and for each $x_1,..,x_N\in\R^d$, $(i,j)\in \br{1,...,N}^2$:
\begin{align}
\label{eq:empfirstder}
\partial_{x_i} g^N(x_1,...,x_N)= \frac{1}{N} \partial_\mu  g\left(\frac{1}{N}\sum_{i=1}^N \delta_{x_i}\right) [x_i]
\end{align}
and
\begin{align}
\label{eq:empsecondder}
\partial_{x_i} \partial_{x_j} g^N(x_1,...,x_N)= \frac{1}{N} \partial_z \partial_\mu  g\left(\frac{1}{N}\sum_{i=1}^N \delta_{x_i}\right) [x_i] \1_{i=j} + \frac{1}{N^2} \partial^2_\mu g\left(\frac{1}{N}\sum_{i=1}^N \delta_{x_i}\right)[x_i,x_j].
\end{align}
\end{proposition}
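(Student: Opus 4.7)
The plan is to prove the two derivative formulas by realizing the empirical measure $\mu^N_x = \frac{1}{N}\sum_{j=1}^N\delta_{x_j}$ as the law of a concrete step function on an atomless probability space, so that the Lions derivative on $\mc{P}_2(\R^d)$ translates into an ordinary Fr\'echet derivative on $L^2$ that is easy to differentiate in the coordinates $x_i$. Concretely, I will fix an atomless $(\tilde\W,\tilde\F,\tilde\Prob)$ and a measurable partition $\tilde\W=\bigsqcup_{j=1}^N A_j$ with $\tilde\Prob(A_j)=1/N$, and use the lift $X = \sum_{j=1}^N x_j\1_{A_j}\in L^2(\tilde\W,\tilde\F,\tilde\Prob;\R^d)$. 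Then $\mc{L}(X)=\mu^N_x$, so $\tilde g(X)=g^N(x_1,\dots,x_N)$ in the notation of Definition \ref{def:lionderivative}. Perturbing $x_i\mapsto x_i+hv$ for $v\in\R^d$ corresponds to the $L^2$-perturbation $X\mapsto X+hv\1_{A_i}$, and $\|hv\1_{A_i}\|_{L^2}=h|v|/\sqrt N$.

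For \eqref{eq:empfirstder}, Item (1) of Definition \ref{def:fullyC2} gives Fr\'echet differentiability of $\tilde g$ at $X$ with representative $D\tilde g(X)=\partial_\mu g(\mu^N_x)[X]$ in the self-dual pairing of $L^2$. Therefore
\begin{align*}
g^N(\dots,x_i+hv,\dots)-g^N(\dots,x_i,\dots) &= \int_{\tilde\W}\partial_\mu g(\mu^N_x)[X(\w)]\cdot hv\1_{A_i}(\w)\,\tilde\Prob(d\w)+o(h)\\
&= \frac{h}{N}\partial_\mu g(\mu^N_x)[x_i]\cdot v + o(h),
\end{align*}
since $X\equiv x_i$ on $A_i$ and $\tilde\Prob(A_i)=1/N$. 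Dividing by $h$, letting $h\to 0$, and varying $v$ over the standard basis of $\R^d$ produces the claimed formula. The joint continuity of $(\mu,z)\mapsto \partial_\mu g(\mu)[z]$ ensures that the $o(h)$ remainder is indeed negligible.

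For \eqref{eq:empsecondder}, I differentiate the expression $\partial_{x_i}g^N(x_1,\dots,x_N)=\frac{1}{N}\partial_\mu g(\mu^N_x)[x_i]$ with respect to $x_j$. There are two sources of $x_j$-dependence inside $\partial_\mu g(\mu^N_x)[x_i]$: the measure $\mu^N_x$ itself (which depends on every $x_j$) and the evaluation point $x_i$ (which coincides with $x_j$ only when $i=j$). For the evaluation-point contribution, Item (2) of Definition \ref{def:fullyC2} gives the classical derivative $\partial_z\partial_\mu g(\mu^N_x)[x_i]$, and an elementary chain-rule application yields the term $\tfrac{1}{N}\partial_z\partial_\mu g(\mu^N_x)[x_i]\1_{i=j}$. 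For the measure contribution, I apply the lift argument of the previous paragraph to the vector-valued map $\mu\mapsto \partial_\mu g(\mu)[x_i]$, component by component: Item (3) of Definition \ref{def:fullyC2} asserts this map is Lions-differentiable with derivative $\partial^2_\mu g(\mu)[x_i,\cdot]$, so perturbing $x_j\mapsto x_j+he_k$ (hence $X\mapsto X+he_k\1_{A_j}$) and repeating the computation of the previous paragraph produces $\tfrac{1}{N^2}\partial^2_\mu g(\mu^N_x)[x_i,x_j]$. Adding the two contributions gives \eqref{eq:empsecondder}, and the joint continuity assumptions in Definition \ref{def:fullyC2} upgrade all the derivatives to continuous maps on $(\R^d)^N$, confirming that $g^N\in C^2$.

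The main subtlety will be the careful bookkeeping in the second-derivative step: one must keep straight that, when $i=j$, \emph{both} the evaluation-point and the measure contribute, whereas for $i\neq j$ only the measure term appears, which is exactly why the indicator $\1_{i=j}$ appears in front of one term but not the other. A secondary technical point, which is handled by Item (3) of Definition \ref{def:fullyC2}, is that the Fr\'echet derivative of $X'\mapsto \partial_\mu g(\mc{L}(X'))[z]$ at $X$ is realized by the kernel $\partial^2_\mu g(\mc{L}(X))[z,\cdot]$ evaluated at $X$; this is precisely the hypothesis that lets the $L^2$-perturbation argument go through identically in the second round.
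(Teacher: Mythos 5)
Your proof is correct. The paper itself gives no argument for this proposition, simply citing Propositions 5.35 and 5.91 of \cite{CD}; your construction --- lifting the empirical measure through the step function $X=\sum_{j=1}^N x_j\1_{A_j}$ over a partition into cells of mass $1/N$ and pairing the Fr\'echet derivative of the lift against perturbations supported on a single cell --- is essentially the standard argument behind those two propositions, so you have written out the proof the paper delegates to rather than found a genuinely different route. Your bookkeeping for \eqref{eq:empsecondder} (evaluation-point term only when $i=j$ via Item (2), measure term for all $(i,j)$ via Item (3), each picking up one factor of $1/N$ from $\tilde\Prob(A_j)=1/N$) is exactly right. One cosmetic remark: in the first-derivative step the $o(h)$ remainder is negligible purely because of Fr\'echet differentiability of the lift, not because of joint continuity of $(\mu,z)\mapsto\partial_\mu g(\mu)[z]$; the joint continuity assumptions of Definition \ref{def:fullyC2} are instead what you need --- and do invoke later --- to work with a pointwise version of the derivatives that varies continuously in $(x_1,\dots,x_N)$, to justify the chain-rule splitting in the $i=j$ case, and to upgrade existence of the partial derivatives to $g^N$ being twice continuously differentiable.
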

\begin{proof}
This follows from Propositions 5.35 and 5.91 of \cite{CD}.
\end{proof}

\section*{Declarations}
Bezemek was partially supported by NSF-DMS 2107856. Heldman was partially supported by NSF-DMS 1902854, ARO W911NF-20-1-0244, and a subgrant of NSF-OAC 2139536. The funding agencies are not expected to gain or lose financially through publication of this manuscript. The authors have no relevant financial interests to disclose. %\red{Z: Delete unused references - [6,14,20,23,34,50] - some of this should probably be cited though, like feng's work and fleming and soner}
%\begin{proposition}\label{prop:symvsmeasurefunction}
%\red{Would like to define a space of symmetric functions on $\R^{dN}$ that can be realized as continuous bounded functions on $\mc{P}(\R^d)$ of the empirical measure---i.e. the reverse operation of taking the empirical projection in the previous proposition. Maybe useful info in \cite{CDP}?}
%\end{proposition}

\begingroup
\begin{bibdiv}
\begin{biblist}
\bib{ADPZ1}{article}{

        title={From a Large-Deviations Principle to the Wasserstein Gradient Flow: A New Micro-Macro Passage},

        author={S. Adams}, 
        
        author={N. Dirr},

        author={M. A. Peletier},

        author={J. Zimmer}, 

        journal={Communications in Mathematical Physics},

        volume={307},

        number={3},

        date={2011}
}
\bib{ADPZ2}{article}{

        title={Large Deviations and Gradient Flows},

        author={S. Adams}, 
        
        author={N. Dirr},

        author={M. A. Peletier},

        author={J. Zimmer}, 

        journal={Philosophical Transactions of the Royal Society A},

        volume={371},

        number={2005},

        date={2013}
}
\bib{BRS}{article}{

        title={Adaptively Biased Molecular Dynamics for Free Energy Calculations},

        author={V. Babin}, 
        
        author={C. Roland},

        author={C. Sagui} ,

        journal={The Journal of Chemical Physics},

        volume={128},

        number={13},

        date={2008},
        
        pages={128--135}
}
\bib{BCGL}{article}{

        title={The Mean Field Schr\"odinger Problem: Ergodic Behavior, Entropy Estimates and Functional Inequalities},

        author={J. Backhoff},

        author={G. Conforti},

        author={I. Gentil},
        author={C. L\'eonard},
        journal={Probability Theory and Related Fields},

        volume={178},
        number={2},

        date={2020},

        pages={475--530}
}
\bib{BP}{article}{

        title={A Weak Martingale Approach to Linear–Quadratic McKean–Vlasov Stochastic Control Problems},

        author={M. Basei}, 
        
        author={H. Pham}, 

        journal={Journal of Optimization Theory and Application},

        volume={181},
        number={2},

        date={2019},
        
        pages={347--382}
}
\bib{BM}{article}{%
        title={Metastability in a Continuous Mean-Field Model at Low Temperature and Strong Interaction},%
        author={K. Bashiri}, 
        
        author={G. Menz}, %
        journal={Stochastic Processes and their Applications},%
        volume={134},%
        date={2021},
        
        pages={132--173}
}
\bib{BRHAMT2}{arxiv}{

        title={Multilevel Importance Sampling for McKean-Vlasov Stochastic Differential Equation},

        author={N. Ben Rached},

        author={A.-L. Haji-Ali},

        author={S.M.S. Pillai},

        author={R. Tempone},

        date={2022},
      arxiveprint={
            arxivid={2208.03225},
            arxivclass={math.NA},
      }
}
\bib{BRHAMT1}{arxiv}{

        title={Single Level Importance Sampling for McKean-Vlasov Stochastic Differential Equations},

        author={N. Ben Rached},

        author={A.-L. Haji-Ali},

        author={S.M.S. Pillai},

        author={R. Tempone},

        date={2022},
      arxiveprint={
            arxivid={2207.06926},
            arxivclass={math.NA},
      }
}

\bib{BS}{article}{

        title={Large Deviations for Interacting Multiscale Particle Systems},

        author={Z.W. Bezemek},

        author={K. Spiliopoulos},

        date={2023},
        volume={155},
        pages={27--108},
        journal={Stochastic Processes and their Applications}
}
\bib{BS_MDP2022}{article}{

        title={Moderate Deviations for Fully Coupled Multiscale Weakly Interacting Particle Systems},

        author={Z.W. Bezemek},

        author={K. Spiliopoulos},

        date={2023},
        journal={Stochastics and Partial Differential Equations: Analysis and Computations}
}
\bib{BQRS}{article}{

        title={Improving Control Based Importance Sampling Strategies for Metastable Diffusions via Adapted Metadynamics},

        author={E.R. Borrell},

        author={J. Quer},
        author={L. Richter},
        author={C. Sch\"utte},
        journal = {SIAM Journal on Scientific Computing},
        date={2023},
        pages = {S298--S323}
}
\bib{BGN}{article}{

        title={Perturbative Calculation of Quasi-Potential in Non-equilibrium Diffusions: A Mean-Field Example},

        author={F. Bouchet}, 
        
        author={K. Gaw\k{e}dzki},

        author={C. Nardini}, 

        journal={Journal of Statistical Physics},

        volume={163},

        date={2016},
        
        pages={1157--1210}
}

\bib{BD}{article}{

        title={A Variational Representation for Certain Functionals of Brownian Motion},

        author={M. Bou\'e}, 
        
        author={P. Dupuis}, 

        journal={The Annals of Probability},

        volume={26},

        number={4},

        date={1998},
        
        pages={1641--1659}
}
%\bib{BLPR}{article}{%

%        title={Mean-Field Stochastic Differential Equations and Associated PDEs},%

%        author={R. Buckdahn},
%        author={J. Li},
%        author={S. Peng},
%        author={C. Rainer},%

%        journal={The Annals of Probability},%

%        volume={45},%

%        number={2},%

%        date={2017},%

%        pages={824--878}
%}
\bib{BCcurrents}{article}{
        title = {Asymptotic behavior of stochastic currents under large deviation scaling with mean field interaction and vanishing noise},
        journal = {Annali della Scuola Normale di Pisa - Classe di Scienze},
        author={A. Budhiraja},
        author={M. Conroy},
        year = {2022},
        pages = {1749--1805},
        number={5},
        volume={23}
}
\bib{BCsmallnoise}{article}{

        title={Empirical Measure and Small Noise Asymptotics Under Large Deviation Scaling for Interacting Diffusions},

        author={A. Budhiraja},

        author={M. Conroy},

        journal={Journal of Theoretical Probability},

        volume={35},

        date={2022},

        pages={295--349}

}
\bib{BDF}{article}{

        title={Large Deviation Properties of Weakly Interacting Particles via Weak Convergence Methods},

        author={A. Budhiraja},

        author={P. Dupuis},

        author={M. Fischer},

        journal={The Annals of Probability},

        volume={40},

        number={1},

        date={2012},

        pages={74--100}

}
\bib{BW}{article}{

        title={Moderate Deviation Principles for Weakly Interacting Particle Systems},

        author={A. Budhiraja},

        author={R. Wu},

        journal={Probability Theory and Related Fields},

        volume={168},

        date={2017},

        pages={721--771}

}
\bib{BIRS}{article}{

        title={Viscosity Solutions for Controlled McKean-Vlasov Jump-Diffusions},

        author={M. Burzoni},

        author={V. Ignazio},
        author={A.M. Reppen},
        author={H.M. Soner},

        journal={SIAM Journal on Control and Optimization},

        volume={58},
        number={3},
        date={2020},

        pages={1676--1699}

}

%\bib{NotesMFG}{report}{%

%        title = {Notes on Mean Field Games (from P. L. Lions’ lectures at Collège de France)},%

%        author = {P. Cardaliaguet},%

%        date = {2013},%

%        status= {unpublished},%

%        eprint = {https://www.ceremade.dauphine.fr/~cardaliaguet/MFG20130420.pdf},
%}
\bib{CD}{book}{

        title = {Probabilistic Theory of Mean Field Games with Applications I},

        author = {R. Carmona},

      author = {F. Delarue},
  
        date = {2018},

        publisher = { Springer},

        address = {NY}

}

\bib{CL}{article}{

        title={Convergence Analysis of Machine Learning Algorithms for the Numerical Solution of Mean Field Control and Games: II -- The Finite Horizon Case},

        author={R. Carmona},

        author={M. Lauri\`ere},

        journal={The Annals of Applied Probability},

        volume={32},

        number={6},

        date={2022},

        pages={4065--4105}
}

%\bib{CDP}{article}{%

%        title={A $\lambda$-Convexity Based Proof for the Propagation of Chaos for Weakly Interacting Stochastic Particles},%

%        author={J.A. Carrillo},%

%        author={M.G. Delgadino},%

%        author={G.A. Pavliotis},%

%        journal={Journal of Functional Analysis},%

%        volume={10},%

%        date={2020},%

%}
\bib{CCD}{book}{

        title={A Probabilistic Approach to Classical Solutions of the Master Equation for Large Population Equilibria},

        author={J.F. Chassagneux},

        author={D. Crisan},

        author={F. Delarue},

        date={2022},
        publisher={Memoirs of the American Mathematical Society}
}
\bib{CCD_Numerical}{article}{

        title={Numerical Method for FBSDEs of McKean–Vlasov Type},

        author={J.F. Chassagneux},

        author={D. Crisan},

        author={F. Delarue},

        journal={The Annals of Applied Probability},

        volume={29},

        number={3},
        date={2019},

}
\bib{DF1}{article}{

        title={Well-Posedness for Some Non-Linear Diffusion Processes and Related PDE on the Wasserstein Space},

        author={P. E. Chaudru de Raynal},

        author={N. Frikha},
        journal={Journal de Math\'ematiques Pures et Appliqu\'ees},

        volume={159},
        date={2022},
        pages={1--167}
}
\bib{PropChaosI}{article}{

        title={Propagation of Chaos: A Review of Models, Methods and Applications. I. Models and Methods},

        author={L.P. Chaintron},

        author={A. Diez},

        journal = {Kinetic and Related Models},
        volume = {15},
        number = {6},
        pages = {895-1015}
        year = {2022}
}
\bib{CGHLPC}{article}{

        title={The Adaptive Biasing Force Method: Everything You Always Wanted To Know but Were Afraid To Ask},

        author={J. Comer},
        author={J.C. Gumbart},
        author={J. H\'enin},
        author={T. Leli\`evre},
        author={A. Pohorille},
        author={C. Chipot},

        journal={The Journal of Physical Chemistry B},

        volume={118},
        number={3},

        date={2015},

        pages={1129--1151}

}
\bib{CGKPR1}{article}{

        title={Master Bellman Equation in the Wasserstein Space: Uniqueness of Viscosity Solutions},

        author={A. Cosso},

        author={F. Gozzi},

        author={I. Kharroubi},
        author={H. Pham},
        author={M. Rosestolato},

        journal = {Transactions of the American Mathematical Society},
        date={2023}
}
\bib{CGKPR2}{article}{

        title={Optimal Control of Path-Dependent McKean-Vlasov SDEs in Infinite Dimension},

        author={A. Cosso},

        author={F. Gozzi},

        author={I. Kharroubi},
        author={H. Pham},
        author={M. Rosestolato},
        journal = {The Annals of Applied Probability},
        date={2023}
        volume = {33},
        number={4},
        pages={2863--2918}
}
\bib{Dawson}{article}{

        title={Critical Dynamics and Fluctuations for a Mean-Field Model of Cooperative Behavior},

        author={D.A. Dawson},

        journal={Journal of Statistical Physics},

        volume={31},

        date={1983},

        pages={29--85}

}

\bib{DG}{article}{

        title={Large Deviations from the Mckean-Vlasov Limit for Weakly Interacting Diffusions},

        author={D. A. Dawson},

        author={J. G\"artner},

        journal={Stochastics},

        volume={20},

        number={4},

        date={1987},

        pages={247-308}

}

\bib{DMG}{article}{

        title={Large Deviations for Interacting Particle Systems: Applications to Non-Linear Filtering},

        author={P. Del Moral},

        author={A. Guionnet},

        journal={Stochastic Processes and their Applications},

        volume={78},
        number={1},

        date={1998},

        pages={69--95}

}
\bib{DLR}{article}{%
        title={From the Master Equation to Mean Field Game Limit Theory: Large Deviations and Concentration of Measure},%
        author={F. Delarue},%
        author={D. Lacker},%
        author={K. Ramanan},%
        journal={The Annals of Probability},%
        volume={48},
        number={1},%
        date={2020},%
        pages={211--263}%
}
\bib{dRSTldp}{article}{

        title={Freidlin–Wentzell LDP in Path Space for McKean–Vlasov Equations and the Functional Iterated Logarithm Law},

        author={G. dos Reis},

        author={W. Salkeld},

        author={J. Tugaut},

        journal={The Annals of Probability},

        volume={29},
        number={3},

        date={2019},

        pages={1487--1540}

}
\bib{dRST}{article}{

        title={Importance sampling for McKean-Vlasov SDEs},

        author={G. dos Reis},

        author={G. Smith},

        author={P. Tankov},

        journal = {Applied Mathematics and Computation},
        date={2023},
        volume={453}

}

\bib{DE}{book}{

        title = {A Weak Convergence Approach to the Theory of Large Deviations},

        author = {P. Dupuis},

      author = {R. S. Ellis},

        date = {1997},

        publisher = {Wiley},

        address = {NY}

}
\bib{DSZ}{article}{
        author={P. Dupuis}, 
        author={K. Spiliopoulos},
        author={X. Zhou}, 
        title={Escaping from an Attractor: Importance Sampling and Rest Points I},     
        journal={The Annals of Applied Probability}, 
        volume={25}, 
        number={5}, 
        date={2015}, 
        pages={2909--2958}
}
\bib{DSW}{article}{
        author={P. Dupuis}, 
        author={K. Spiliopoulos},
        author={H. Wang}, 
        title={Importance Sampling for Multiscale Diffusions},     
        journal={Multiscale Modeling \& Simulation}, 
        volume={10}, 
        number={1}, 
        date={2012}, 
        pages={1-27}
}
\bib{DW1}{article}{
        author={P. Dupuis}, 
        author={H. Wang}, 
        title={Importance Sampling, Large Deviations, and Differential Games},     
        journal={Stochastics and Stochastic Reports}, 
        volume={76}, 
        number={6}, 
        date={2004}, 
        pages={481--508}
}
\bib{DW2}{article}{
        author={P. Dupuis}, 
        author={H. Wang}, 
        title={Subsolutions of an Isaacs Equation and Efficient Schemes for Importance Sampling},     
        journal={Mathematics of Operations Research}, 
        volume={32}, 
        number={3}, 
        date={2007}, 
        pages={723--757}
}
\bib{ERV}{article}{
        author={W. E}, 
        author={W. Ren},
        author={E. Vanden-Eijnden}, 
        title={Minimum Action Method for the Study of Rare Events},     
        journal={Communications on Pure and Applied Mathematics}, 
        volume={57}, 
        number={5}, 
        date={2004}, 
        pages={637--656}
}
\bib{EMFBG}{article}{
        author={L. Ebener}, 
        author={G. Margazoglou},
        author={J. Friedrich},
        author={L. Biferale},
        author={R. Grauer},
        title={Instanton Based Importance Sampling for Rare Events in Stochastic PDEs},     
        journal={Chaos: An Interdisciplinary Journal of Nonlinear Science}, 
        volume={29}, 
        number={6}, 
        date={2019}
}
\bib{FK}{article}{

        title={A Comparison Principle for Hamilton–Jacobi Equations Related to Controlled Gradient Flows in Infinite Dimensions},

        author={J. Feng},
        author={M. Katsoulakis},

        journal={Archive for Rational Mechanics and Analysis},

        volume={192},

        date={2009},

        pages={275--310}

}
\bib{FKbook}{book}{

        title = {Large Deviations for Stochastic Processes},

        author = {J. Feng},
        author={G. Kurtz},

        date = {2006},

        publisher = {American Mathematical Society},

        address = {Providence}

}
\bib{FMZ}{article}{

        title={A Hamilton–Jacobi PDE Associated with Hydrodynamic Fluctuations from a Nonlinear Diffusion Equation},

        author={J. Feng},
        author={T. Mikami},
        author={J. Zimmer},

        journal={Communications in Mathematical Physics},

        volume={385},

        date={2021},

        pages={1--54}

}
\bib{FN}{article}{

        title={Hamilton–Jacobi Equations in Space of Measures Associated with a System of Conservation Laws},

        author={J. Feng},
        author={T. Nguyen},

        journal={Journal de Math\'ematiques Pures et Appliqu\'ees},

        volume = {97},
        number = {4},
        pages = {318--390},
        year = {2012},

}
\bib{FischerFormofRateFunction}{article}{

        title={On the Form of the Large Deviation Rate Function for the Empirical Measures of Weakly Interacting Systems},

        author={M. Fischer},

        journal={Bernoulli},

        volume={20},
        number={4},

        date={2014},

        pages={1765--1801}

}
\bib{FJ}{article}{

        title={Asymptotic Series and Exit Time Probabilities},

        author={W.H. Fleming},
        author={M.R. James},

        journal={The Annals of Probability},

        volume={20},
        number={3},

        date={1992},

        pages={1369--1384}

}

%\bib{FS}{book}{%

%        title = {Controlled Markov Processes and Viscosity Solutions},%

%        author = {W. H. Fleming},%

%        author = {H. M. Soner},
%  
%        date = {2006},%

%        publisher = {Springer},%

%        address = {NY}%

%}
\bib{FZ}{article}{

        title={Deep Learning Methods for Mean Field Control Problems With Delay},

        author={J.P. Fouque},
        author={Z. Zhang},

        journal={Frontiers in Applied Mathematics and Statistics},

        volume={12},

        date={2020},

        pages={1369--1384}

}
\bib{FW}{book}{
        author={M.I. Freidlin},
        author={A.D. Wentzell},
        title={Random Perturbations of Dynamical Systems},
        year={2012},
        publisher={Springer},
        address={Heidelberg}
}

\bib{Garnier1}{article}{
        author={J. Garnier}, 
        author={G. Papanicolaou},
        author={T.W. Yang}, 
        title={Large Deviations for a Mean Field Model of Systemic Risk},     
        journal={SIAM Journal of Financial Mathematics}, 
        volume={4}, 
        number={1}, 
        date={2013}, 
        pages={151--184}
}

\bib{Garnier2}{article}{
        author={J. Garnier}, 
        author={G. Papanicolaou},
        author={T.W. Yang}, 
        title={Consensus Convergence with Stochastic Effects},     
        journal={Vietnam Journal of Mathematics}, 
        volume={45}, 
        date={2017}, 
        pages={51--75}
}
\bib{GSS_ImportanceSampling}{arxiv}{

        title={Importance Sampling for Stochastic Reaction-Diffusion Equations in the Moderate Deviation Regime},

        author={I. Gasteratos},

        author={M. Salins},

        author={K. Spiliopoulos},

        date={2022},
      arxiveprint={
            arxivid={2206.00646},
            arxivclass={math.PR},
      }
}
\bib{GNP}{article}{
        author={N. Gavish},
        author={P. Nyquist},
        author={M. Peletier}, 
        volume={58},
        pages={71--121},
        title={Large Deviations and Gradient Flows for the Brownian One-Dimensional Hard-Rod System},     
        journal={Potential Analysis}, 
        date={2021}
}
\bib{GJZ}{article}{

        title={Large and Moderate Deviations for Importance Sampling in the Heston Model},

        author={M. Geha},
        author={A. Jacquier},
        author={\v{Z}. \v{Z}uri\v{c}},

        date={2021},
      arxiveprint={
            arxivid={2111.00348},
            arxivclass={q-fin.PR},
      }
}
\bib{GLPW}{article}{

        author={M. Germain},
        author={M. Lauri\`ere},
        author={H. Pham},
        author={X. Warin},

        title={DeepSets and their Derivative Networks for Solving Symmetric PDEs},     
        journal={Journal of Scientific Computing}, 
        date={2022},
        volume={91},
        number={63}

}
\bib{GMW}{article}{

        author={M. Germain},
        author={J. Mikael},
        author={X. Warin},

        title={Numerical Resolution of McKean-Vlasov FBSDEs Using Neural Networks},     
        journal={Methodology and Computing in Applied Probability }, 
        date={2022},
        volume={24},
        pages={2557--2586}
}
\bib{GPW}{article}{

        title={Rate of Convergence for Particle Approximation of PDEs in Wasserstein Space},

        author={M. Germain},
        author={H. Pham},
        author={X. Warin},

        date={2022},
        journal={Journal of Applied Probability},
        volume={59},
        number={4},
        pages={992--1008}
}
\bib{GSE}{article}{

        title={Sharp Asymptotic Estimates for Expectations, Probabilities, and Mean First Passage Times in Stochastic Systems with Small Noise},

        author={T. Grafke},
        author={T. Sch\"afer},
        author={E. Vanden-Eijnden},
        journal = {Communications on Pure and Applied Mathematics},
        date={2023}

}
\bib{Grunbaum}{article}{
        author={F.A. Gr\"unbaum}, 
        title={Propagation of Chaos for the Boltzmann Equation},     
        journal={Archive for Rational Mechanics and Analysis}, 
        volume={42},  
        date={1971}, 
        pages={323-–345}
}
\bib{GS}{article}{
        author={R. S. Gvalani}, 
        author={A. Schlichting},
        title={Barriers of the McKean–Vlasov Energy via a Mountain Pass Theorem in the Space of Probability Measures},     
        journal={Journal of Functional Analysis}, 
        volume={279}, 
        number={11}, 
        date={2020}, 
        pages={437--461}
}
\bib{HHL}{arxiv}{

        title={Learning High-Dimensional McKean-Vlasov Forward-Backward Stochastic Differential Equations with General Distribution Dependence},

        author={J. Han},
        author={R. Hu},
        author={J. Long},

        date={2022},
      arxiveprint={
            arxivid={2204.11924},
            arxivclass={math.OC},
      }
}
\bib{HBSBS}{article}{
        author={C. Hartmann},
        author={R. Banisch}, 
        author={M. Sarich},
        author={T. Badowski},
        author={C. Sch\"utte},
        title={Characterization of Rare Events in Molecular Dynamics},     
        journal={Entropy}, 
        volume={16}, 
        number={1}, 
        date={2014}, 
        pages={350--376}
}
\bib{HR}{arxiv}{

        title={Nonasymptotic Bounds for Suboptimal Importance Sampling},

        author={C. Hartmann},
        author={L. Richter},

        date={2021},
      arxiveprint={
            arxivid={2102.09606},
            arxivclass={math.ST},
      }
}

\bib{Kac}{book}{
        title = {Probability and Related Topics in Physical Sciences}, 
        author = {M. Kac},
        date = {1957}, 
        publisher = {American Mathematical Society}, 
        address = {Providence}
}
\bib{KS}{book}{

        title = {Brownian Motion and Stochastic Calculus},

        author = {I. Karatzas},

      author = {S. Shreve},
  
        date = {1998},

        publisher = {Springer},

        address = {NY}

}
\bib{Lacker}{article}{

        title={Limit Theory for Controlled McKean-Vlasov Dynamics},

        author={D. Lacker},

        journal={SIAM Journal on Control and Optimization},
        volume={55},
        number={3},
        date={2017},
        pages={1641--1672}
}
\bib{Lauriere}{article}{

        title={Numerical Methods for Mean Field Games and Mean Field Type Control},

        author={M. Lauri\`ere},

        date={2021},
        journal={Proceedings of Symposia in Applied Mathematics},
        volume={78},
        pages={221--282}
}

\bib{LP}{article}{
        author={M. Lauri\`er}, 
        author={O. Pironneau},
        title={Dynamic Programming for Mean-Field Type Control},     
        journal={Journal of Optimization Theory and Applications}, 
        volume={169},  
        date={2016}, 
        pages={902-–924}
}
\bib{LSZZ}{article}{
        author={W. Liu}, 
        author={Y. Song},
        author={J. Zhai},
        author={T. Zhang},
        title={Large and Moderate Deviation Principles for McKean-Vlasov SDEs with Jumps},     
        journal = {Potential Analysis},
        volume={59},
        number={3},
        date={2023},
        pages={1141--1190}
}

\bib{MSImportanceSampling}{article}{

        title={Importance Sampling for Slow-Fast Diffusions Based on Moderate Deviations},

        author={M. R. Morse},

        author={K. Spiliopoulos},

        journal={SIAM Journal on Multiscale Modeling and Simulation},
        volume={18},
        number={1},
        date={2020},
        pages={315--350}
}
\bib{NN}{arxiv}{

        title={A Note on Large Deviations for Interacting Particle Dynamics for Finding Mixed Equilibria in Zero-Sum Games},

        author={V. Nilsson},
        author={P. Nyquist},

        date={2022},
      arxiveprint={
            arxivid={2206.15177},
            arxivclass={stat.ML},
      }
}
\bib{NR}{article}{

        title={Solving High-Dimensional Hamilton–Jacobi–Bellman PDEs Using Neural Networks: Perspectives from the Theory of Controlled Diffusions and Measures on Path Space},

        author={N. N\"usken},
        author={L. Richter},

        journal={Partial Differential Equations and Applications},

        volume={2},
        number={48},

        date={2021}
}
\bib{Orrieri}{article}{

        title={Large Deviations for Interacting Particle Systems: Joint Mean-Field and Small-Noise Limit},

        author={C. Orrieri},

        journal={Electronic Journal of Probability},

        volume={25},

        date={2020},

        pages={1-44}
}
\bib{PW}{article}{
        author={H. Pham}, 
        author={X. Wei},
        title={Bellman Equation and Viscosity Solutions for Mean-Field Stochastic Control Problem},     
        journal={ESAIM: Control, Optimisation and Calculus of Variations}, 
        volume={24}, 
        number={1}, 
        date={2018}, 
        pages={437-–461}
}

%\bib{RBQRS}{arxiv}{%

%        title={Improving Control Based Importance Sampling Strategies for Metastable Diffusions via Adapted Metadynamics},%

%        author={E. Ribera Borrell},
%        author={J. Quer},
%        author={L. Richter},
%        author={C. Sch\"utte},%

%        date={2022},
%      arxiveprint={
%            arxivid={2206.06628},
%            arxivclass={math.OC},
%      }
%}
\bib{SPDEImportanceSampling}{article}{
        author={M. Salins},
        author={K. Spiliopoulos}, 
        title={Rare Event Simulation via Importance Sampling for Linear SPDE's},     
        journal={Stochastics and Partial Differential Equations: Analysis and Computations}, 
        volume={5}, 
        number={4}, 
        date={2017}, 
        pages={652-–690}
}
\bib{NonAsymptotic}{article}{
        author={K. Spiliopoulos}, 
        title={Nonasymptotic Performance Analysis of Importance Sampling Schemes for Small Noise Diffusions},     
        journal={Journal of Applied Probability}, 
        volume={53}, 
        number={3}, 
        date={2015}, 
        pages={797-–810}
}
\bib{TTZ}{article}{

        title={Dynamic Programming Equation for the Mean Field Optimal Stopping Problem},

        author={M. Talbi},
        author={N. Touzi},
        author={J. Zhang},
        volume = {61},
        number = {4},
        journal = {SIAM Journal on Control and Optimization},
        year = {2023},
        pages = {2140--2164}
}

\bib{TS}{article}{

        title={Large Deviation Theory-Based Adaptive Importance Sampling for Rare Events in High Dimensions},

        author={S. Tong},
        author={G. Stadler},
        volume = {11},
        number = {3},
        journal = {SIAM/ASA Journal on Uncertainty Quantification},
        year = {2023},
        pages = {788--813}
}
\bib{VTP}{article}{
        author={O. Valsson}, 
        author={P. Tiwary},
        author={M. Parrinello},
        title={Enhancing Important Fluctuations: Rare Events and Metadynamics from a Conceptual Viewpoint},     
        journal={Annual Review of Physical Chemistry}, 
        volume={67},  
        date={2016}, 
        pages={159-–184}
}
\bib{VW}{article}{
        author={E. Vanden-Eijnden}, 
        author={J. Weare},
        title={Rare Event Simulation of Small Noise Diffusions},     
        journal={Communications on Pure and Applied Mathematics}, 
        volume={65}, 
        number={12}, 
        date={2012}, 
        pages={1770-–1803}
}
\bib{Yong}{article}{
        author={J. Yong}, 
        title={Linear-Quadratic Optimal Control Problems for Mean-Field Stochastic Differential Equations},     
        journal={SIAM Journal on Control and Optimization}, 
        volume={51}, 
        number={4}, 
        date={2013}, 
        pages={2809-–2838}
}
\end{biblist}
\end{bibdiv}
\endgroup
\end{document}